\DeclareRobustCommand{\VAN}[3]{#2}
\newcommand{\twopartdef}[4]
{
	\left\{
		\begin{array}{ll}
			#1 & \mbox{if } #2 \\
			#3 & \mbox{if } #4
		\end{array}
	\right.
}
\newtheorem{Def}[subsubsection]{Definition}
\newtheorem{Thm}[subsubsection]{Theorem}
\newtheorem{Lem}[subsubsection]{Lemma}
\newtheorem{Prop}[subsubsection]{Proposition}
\newtheorem{Cor}[subsubsection]{Corollary}
\newtheorem{Conj}{Conjecture}
\newtheorem*{Thm*}{Theorem}
\newtheorem{Lemm}[subsection]{Theorem}
\theoremstyle{remark}
\newtheorem{Rem}[subsubsection]{Remark}
\newtheorem{Claim}[subsubsection]{Claim}
\newtheorem{Assump}[subsubsection]{Assumption}
\newcommand{\gl}{\operatorname{GL}}
\newcommand{\galq}{\gal(\overline{\mathbb{Q}}/\mathbb{Q})}
\newcommand{\frob}{\operatorname{Frob}}
\newcommand{\sym}{\operatorname{Sym}}
\newcommand{\coker}{\operatorname{coker}}
\newcommand{\zz}{\mathbb{Z}}
\newcommand{\rr}{\mathbb{R}}
\newcommand{\cc}{\mathbb{C}}
\newcommand{\qqq}{\mathbb{Q}}
 \newcommand{\TJM}[4]{
\bigl(\begin{smallmatrix}
  #1 & #2 \\
  #3 & #4
\end{smallmatrix} \bigr)}
\newcommand{\apsp}{A_{K,\overline{s}}}
\newcommand{\apgen}{A_{K,\overline{\eta}}}
\newcommand{\ahsp}{A_{H,\overline{s}}}
\newcommand{\ahgen}{A_{H,\overline{\eta}}}
\newcommand{\aksp}{A_{Q,\overline{s}}}
\newcommand{\akgen}{A_{Q,\overline{\eta}}}
\newcommand{\para}{K}
\newcommand{\gsp}{\operatorname{GSp}}
\newcommand{\im}{\operatorname{Im}}
\newcommand{\gal}{\operatorname{Gal}}
\newcommand\restr[2]{{
  \left.\kern-\nulldelimiterspace 
  #1 
  \vphantom{\big|} 
  \right|_{#2} 
  }}
\begin{document}

\title[Geometric Jacquet-Langlands for paramodular Siegel threefolds]{A geometric Jacquet-Langlands correspondence for paramodular Siegel threefolds}
\author[van Hoften]{Pol van Hoften}
\email{pol.van\_hoften@kcl.ac.uk}
\address{King's College London}
\keywords{Shimura varieties, Siegel modular forms, Weight-monodromy }
\thanks{This work was supported by the Engineering and Physical Sciences Research Council [EP/L015234/1], The EPSRC Centre for Doctoral Training in Geometry and Number Theory (The London School of Geometry and Number Theory), University College London and King's College London.}
\begin{abstract}
We study the Picard-Lefschetz formula for Siegel modular threefolds of paramodular level and prove the weight-monodromy conjecture for its middle degree inner cohomology. We give some applications to the Langlands programme: Using Rapoport-Zink uniformisation of the supersingular locus of the special fiber, we construct a geometric Jacquet-Langlands correspondence between $\gsp_4$ and a definite inner form, proving a conjecture of Ibukiyama \cite{IbukiyamaConjecture}. We also prove an integral version of the weight-monodromy conjecture and use it to deduce a level lowering result for cohomological cuspidal automorphic representations of $\gsp_4$.
\end{abstract} 
\maketitle

\section{Introduction}
In this paper we will study the cohomology of paramodular Siegel threefolds, using geometric results of \cite{YuParamodular} and the Picard-Lefschetz formula of \cite{SGA7}. Our first main result (Theorem \ref{JLtheorem}) is a geometric Jacquet-Langlands correspondence for such paramodular Siegel threefolds, which proves a conjecture of Ibukiyama \cite{IbukiyamaConjecture} that we will explain below. Our second main result (Theorem \ref{mazursprinciple}) is a level lowering result for cohomological cuspidal automorphic representations of $\gsp_4$, in the spirit of Mazur's level lowering results for modular forms. Our main technical result (Theorem \ref{WMtheorem}) is the weight-monodromy conjecture for the inner cohomology of Siegel threefolds over $\mathbb{Q}_p$ with paramodular level structure at $p$.
\subsection{Ibukiyama's conjecture}
Let $D$ be the quaternion algebra over $\qqq$ that is non-split over $\rr$ and $\qqq_p$ for a prime $p$ and split at all other places. Let $S_k[\Gamma_0(p)]^{\text{new}}$ be the space of modular forms of weight $k \ge 2$ and level $\Gamma_0(p)$ that are $p$-new. A classical result of Eichler gives a Hecke equivariant injection (surjective for $k \ge 3$)
\begin{align}
    S_k[\Gamma_0(p)]^{\text{new}} \xhookrightarrow{} \mathcal{A}^{D^{\times}}_k[\mathcal{O}_D^{\times}], \label{new}
\end{align}
where $\mathcal{A}^{D^{\times}}_k[\mathcal{O}_D^{\times}]$ denotes the space of algebraic modular forms for $D^{\times}$ of weight $k$ and level $\mathcal{O}_D^{\times}$. These can be described explicitly as `functions' on the (finite) class set of $D$ which are easier to understand computationally than modular forms. In fact this description is used in practice to compute bases of newforms for spaces of modular forms. This correspondence is a special case of the Jacquet-Langlands correspondence between the algebraic group $\gl_2/\qqq$ and its inner form $D^{\times}/\qqq$, proven in \cite{JacquetLanglands} using the trace formula.

The quaternionic similitude group $G=GU_2(D)$ associated with $D$ is an inner form of $\gsp_4$, such that $G(\rr)$ is compact modulo centre. General conjectures of Langlands predict a transfer from automorphic representations of $G$ to automorphic representations of $\gsp_4$. A particular instance of this transfer was first conjectured by Ihara and Ibukiyama \cites{Ihara, Ibukiyama1} and later extended by Ibukiyama. We will state a slight reformulation of Conjecture 5.1 of \cite{IbukiyamaConjecture} below. Let $S_{k,j}[\para(p)]$ denote the space of Siegel modular forms of weight $k,j$ and level $\para(p)$, where $\para(p)$ is the paramodular group (c.f. Section \ref{cohomology}). Let $\mathcal{A}^G_{k,j}[K_2(p)]$ denote the space of algebraic modular forms for $G$ of weight $k,j$ and level $K_2(p)$, where $K_2(p)$ is an analogue of the paramodular group (c.f. Section \ref{Sec:AlgModForms}).
\begin{Conj}[Ibukiyama] \label{Conj1}
For $k \ge 0$ and for $j \ge 3$ there is an injective map
\begin{align}
    S_{k,j}[\para(p)]^{\text{new}} \xhookrightarrow{} \mathcal{A}^G_{k,j}[K_2(p)], \label{newer}
\end{align}
which is Hecke-equivariant for the prime-to-$p$ Hecke operators. 
\end{Conj}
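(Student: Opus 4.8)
\emph{Proof strategy.} The plan is to deduce Conjecture~\ref{Conj1} from the weight-monodromy theorem (Theorem~\ref{WMtheorem}) together with Rapoport--Zink uniformisation of the supersingular locus, running one cohomological degree higher the argument Ribet used to relate the $p$-new quotient of $J_0(Np)$ to quaternionic modular forms (and which underlies Eichler's injection~\eqref{new}). First I would realise $S_{k,j}[\para(p)]$, via a Faltings--Chai style Eichler--Shimura isomorphism, as a Hecke-equivariant direct summand of the middle inner cohomology $H^3_{\mathrm{inn}}(X_{\overline{\mathbb{Q}}},\mathcal{V}_{k,j})$, where $X/\mathbb{Q}$ is the Siegel threefold with paramodular level $\para(p)$ at $p$ and fixed prime-to-$p$ level $K^p$, and $\mathcal{V}_{k,j}$ is the automorphic $\ell$-adic local system of weight $(k,j)$; for $k\ge 0$, $j\ge 3$ this cohomology is pure and receives contributions only from cuspidal (including CAP) packets, which is the setting in which Theorem~\ref{WMtheorem} applies. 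I would then characterise the $p$-new subspace inside $H^3_{\mathrm{inn}}(X_{\overline{\mathbb{Q}}_p},\mathcal{V}_{k,j})$ as exactly the part on which the monodromy operator $N$ is nonzero: by the classification of paramodular newvectors for $\gsp_4$, every local component $\pi_p$ of paramodular conductor $p$ is a twist of the Steinberg representation or a Saito--Kurokawa-type Langlands quotient, so its Weil--Deligne parameter has $N\ne 0$, and in fact $N$ has maximal rank on the $\pi_p$-isotypic piece; combined with local-global compatibility at $p$ this shows $N$ is \emph{injective} on the $p$-new subspace.

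Next I would bring in the geometry of the special fiber. The semistable (or log-smooth) integral model over $\mathbb{Z}_p$ of \cite{YuParamodular} has special fiber with components $Y_1,Y_2$ meeting along a smooth locus $Z=Y_1\cap Y_2$, and the associated Rapoport weight spectral sequence (the Picard--Lefschetz input of \cite{SGA7}) computes the monodromy filtration on $H^3_{\mathrm{inn}}$ from the cohomologies of $Y_1$, $Y_2$ and $Z$. Theorem~\ref{WMtheorem} asserts that this filtration equals the weight filtration, so $N$ induces an isomorphism from the top graded piece onto a Tate twist of the bottom graded piece; since $N$ is injective on the $p$-new part, that part embeds Hecke-equivariantly into the bottom graded piece, which the spectral sequence realises as a subquotient of $H^1(Z_{\overline{\mathbb{F}}_p},\mathcal{V}_{k,j})$ — morally the ``toric'' or ``character-group'' part of the reduction, cut out by the dual graph of the special fiber.

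It remains to identify this subquotient with $\mathcal{A}^G_{k,j}[K_2(p)]$, and this is the step I expect to be the crux. Rapoport--Zink uniformisation expresses the formal completion of the integral model along the supersingular locus as $G(\mathbb{Q})\backslash\bigl(\widehat{\Omega}\times G(\mathbb{A}_f^p)/K^p\bigr)$, where $G=GU_2(D)$ is precisely the inner form of $\gsp_4$ appearing in Conjecture~\ref{Conj1} and $\widehat{\Omega}$ is the relevant Rapoport--Zink space, and the local system $\mathcal{V}_{k,j}$ restricts on the basic locus to the algebraic representation $V_{k,j}$ of $G$ of highest weight $(k,j)$. Passing to $\mathcal{V}_{k,j}$-cohomology and extracting the bottom monodromy weight annihilates the ``interior'' cohomology of $\widehat{\Omega}$ and leaves only the contribution of its combinatorial skeleton, whose vertices have stabilisers conjugate to $K_2(p)$; since $G(\mathbb{R})$ is compact modulo centre, the result is exactly the finite-dimensional space $\mathcal{A}^G_{k,j}[K_2(p)]$ of $G$-equivariant maps $G(\mathbb{Q})\backslash G(\mathbb{A}_f)/K_2(p)\to V_{k,j}$. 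Composing everything yields the injection $S_{k,j}[\para(p)]^{\mathrm{new}}\hookrightarrow\mathcal{A}^G_{k,j}[K_2(p)]$, equivariant for the prime-to-$p$ Hecke algebra because that algebra acts through the unchanged level $K^p$ on both sides of the uniformisation.

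The two main obstacles are: (i) upgrading ``$N\ne 0$'' to ``$N$ injective'' on the $p$-new subspace, which requires the precise structure of the paramodular local packets at $p$ together with local-global compatibility for the Galois representations attached to cohomological cusp forms on $\gsp_4$; and (ii) matching the combinatorics of the special-fiber strata and of $\widehat{\Omega}$ with the specific compact open $K_2(p)\subset G(\mathbb{Q}_p)$ and the weight $(k,j)$ in Ibukiyama's formulation, i.e.\ verifying that the uniformisation introduces no spurious level, twist, or Tate shift. A secondary technical point is to control the CAP contributions to $H^3_{\mathrm{inn}}$ so that purity and the conclusion of Theorem~\ref{WMtheorem} genuinely apply with the coefficient system $\mathcal{V}_{k,j}$.
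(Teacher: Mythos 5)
Your overall strategy --- push paramodular cusp forms into $H^3_{!}$, use weight-monodromy to force the monodromy operator to be nontrivial on the $p$-new part, and then read off the target from the special fiber via some uniformisation of the basic locus --- is the right high-level picture and matches the paper. However, there is a genuine and load-bearing geometric error in the middle of the argument.

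You describe the integral model of \cite{YuParamodular} as semistable, with special fiber a union of two components $Y_1,Y_2$ meeting along a smooth divisor $Z$, and you propose to run the Rapoport weight spectral sequence and land in a subquotient of $H^1(Z_{\overline{\mathbb{F}}_p},\mathbb{V})$. That is the \emph{Klingen}-level model $A_Q$ (where $\aksp=X^{\mathrm{e}}\cup X^{\mathrm{m}}$ meeting in a smooth surface), not the paramodular model. The paramodular model $A_K$ is regular with smooth generic fiber and its special fiber is smooth outside a \emph{finite} set of closed points $\Sigma$, at which the completed local ring is $\overline{\mathbb{F}}_p\llbracket X,Y,Z,W\rrbracket/(XY-ZW)$, i.e.\ isolated ordinary double points. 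The correct tool is therefore the Picard--Lefschetz formula of \cite{SGA7}, Expos\'e XV, for isolated quadratic singularities, not the weight spectral sequence for a semistable scheme, and the vanishing cycle complex $R\Phi\mathbb{V}$ is concentrated in degree $3$ and supported on the finite set $\Sigma$. This gives $H^3(\apsp,R\Phi\mathbb{V})=\bigoplus_{x\in\Sigma}\mathbb{V}_x(-2)=H^0(\Sigma,\mathbb{V})(-2)$ --- a space of $V$-valued functions on the finite set $\Sigma$, which, once $\Sigma$ is identified (via the map $b:A_Q\to A_K$ and the Kudla--Rapoport description of the supersingular locus of $A_H$) with the Shimura set $G(\mathbb{Q})\setminus G(\mathbb{A}^{\infty})/U^p K_2(p)$, is by definition $\mathcal{A}^G_{a-b,b+3}[U^p K_2(p)]$. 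Your proposed target, a subquotient of $H^1$ of a one-dimensional intersection locus, is the wrong cohomological object and would require a nontrivial extra argument to relate to $\mathcal{A}^G$; the isolated-singularity picture gives the identification essentially for free.

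A secondary imprecision: you argue that paramodular-level-$p$ local components $\pi_p$ necessarily have $N\neq 0$ from the classification of newvectors, and use this to get injectivity of $N$ on the $p$-new part. But the ramified $K(p)$-spherical representations come in five types (IIa, IVc, Vb, Vc, VIc), and only IIa has a \emph{pure} Weil--Deligne parameter; the others can have nonzero $N$ yet still fail purity. The argument in the paper goes the other way: weight-monodromy for $H^3_{!}$ plus local-global compatibility \emph{rules out} the non-generic types, forcing $\pi_p$ to be of type IIa, whence the monodromy has rank exactly one on each such $\pi$-isotypic piece and $\alpha$ cuts out a one-dimensional image there. So the injectivity you want is a consequence of Theorem~\ref{WMtheorem}, not an independent input you can read off the local classification; as written your reasoning risks circularity and does not address the non-generic types. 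Finally, you should be aware that proving $\alpha$ is surjective (needed for the converse statements and dimension formulas) requires the vanishing theorems of \cite{LanSuhII} applied to $H^4_c$ of the hyperspecial-level Siegel threefold, together with the lemmas comparing cohomology with supports across the Hecke correspondences $A_Q\to A_H$ and $A_Q\to A_K$; your sketch does not touch this, but it is not needed for injectivity alone.
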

\begin{Rem}
This conjecture can be used to do explicit computations with Siegel modular forms, we give some examples: In \cite{Dembele} the author constructs an algorithm computing Hecke eigenvalues of Siegel modular forms, assuming a form of Conjecture \ref{Conj1}; in \cite{LanskyPollack} the authors do extensive computations with algebraic modular forms for $G$; in \cite{Fretwell} the author uses the conjecture to find computational evidence for Harder's conjecture on congruences between Siegel modular forms and elliptic modular forms.
\end{Rem}
It is an old idea of Serre that the classical Jacquet-Langlands correspondence between modular forms and quaternion modular forms can be realised geometrically by restricting mod $p$ modular forms (which are sections of a line bundle on the modular curve $Y_0(N)$) to the supersingular locus. This mod $p$ Jacquet-Langlands correspondence can be upgraded to a characteristic zero correspondence by considering the Picard-Lefschetz formula in \'etale cohomology \cite{SGA7} for the modular curve $Y_0(Np)$. Ghitza \cite{Ghitza} generalised Serre's ideas to produce a mod $p$ Jacquet-Langlands correspondence for Siegel modular forms (by restricting to the superspecial locus). Our approach to Conjecture \ref{Conj1} is then to use the Picard-Lefschetz formula for the Siegel threefold with paramodular level at $p$, which is possible because Yu \cite{YuParamodular} computed its singularities. Using results of \cite{KudlaRapoport} and \cite{YuSuperSingular} we can identify the finite set of singular points of $X \otimes \mathbb{F}_p$ with an adelic double quotient of the form
\begin{align}
    G(\qqq) \setminus G(\mathbb{A}_f)/(U^p K_2(p)).
\end{align}
Algebraic modular forms for $G$ are basically functions on this finite set and the Picard-Lefschetz formula gives us a map from the space of these algebraic modular forms to the middle cohomology of $X_{\overline{\qqq}_p}$. This map has an interpretation in terms of the action of the inertia group and our first result concerns this action.
\begin{restatable}{mainThm}{WMtheorem} \label{WMtheorem}
Let $X/\qqq_p$ be the Siegel threefold of neat level $U=U^p K(p)$ and let $\mathbb{V}$ be an automorphic local system of $L$-vector spaces where $L/\qqq_\ell$ is a finite extension ($\ell \not=p)$ of sufficiently regular weight. Then the weight-monodromy conjecture holds for $H^3_{!}(X_{\overline{\qqq}_p}, \mathbb{V})$. \end{restatable}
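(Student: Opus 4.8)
The plan is to base everything on an explicit regular model of $X$ over $\zz_p$ and the Picard--Lefschetz formula, so that the monodromy filtration on $H^3_!$ becomes transparent and the weight--monodromy conjecture reduces to purity statements of Deligne. By the results of \cite{YuParamodular}, $X$ extends to a regular flat scheme $\mathcal{X}/\zz_p$ with smooth generic fibre, such that $\mathcal{X}\to\spec\zz_p$ is smooth away from a finite set $\Sigma\subseteq\mathcal{X}_{\overline{\mathbb{F}}_p}$ of closed points, at each of which it has an ordinary quadratic singularity; by \cite{KudlaRapoport} and \cite{YuSuperSingular} one has $\Sigma\cong G(\qqq)\backslash G(\mathbb{A}^\infty)/(U^pK_2(p))$. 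Since $X$ is not proper, I first reduce the statement to the middle cohomology of a proper smooth variety: one extends the construction above to a toroidal compactification $\overline{X}$ admitting a proper regular model over $\zz_p$ that is semistable along the boundary and retains only the interior ordinary quadratic singularities of $\Sigma$, and notes that $H^3_!(X_{\overline{\qqq}_p},\mathbb{V})$ is a Galois subquotient of $H^3(\overline{X}_{\overline{\qqq}_p},\mathbb{V})$. As Frobenius weights are strict, the weight--monodromy property passes to subquotients, so it suffices to treat $\overline{X}$; alternatively one can argue directly on the nearby-cycles complex of the open model and use that the boundary is invisible to inner cohomology in degree $3$. Since $\ell\neq p$, the local system $\mathbb{V}$ is lisse near the special fibre, so all of this is meaningful integrally and compatibly with nearby cycles.

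Write $H^3$ for $H^3(\overline{X}_{\overline{\qqq}_p},\mathbb{V})$ and apply the Picard--Lefschetz formula of \cite{SGA7}. Because $X$ has relative dimension $n=3$, which is \emph{odd}, the local monodromy at each point of $\Sigma$ is a transvection rather than a reflection; hence the vanishing-cycle complex $R\Phi\mathbb{V}$ is a sum of skyscrapers $\bigoplus_{x\in\Sigma}\Phi_x$ placed in degree $3$, the inertia group $I_{\qqq_p}$ acts unipotently on $H^3$ up to a finite-order quadratic character (coming from the discriminants of the local forms, hence irrelevant for weights), and the logarithm $N$ of the monodromy factors as $N=\operatorname{Var}\circ\operatorname{can}$, with $\operatorname{can}\colon H^3\to\bigoplus_x\Phi_x$ and $\operatorname{Var}\colon\bigoplus_x\Phi_x\to H^3(-1)$. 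As $R\Phi\mathbb{V}$ is supported on the finite set $\Sigma$, the endomorphism $\operatorname{can}\circ\operatorname{Var}$ of $\bigoplus_x\Phi_x$ is diagonal, and each diagonal block vanishes by the Picard--Lefschetz formula in odd relative dimension; therefore $N^2=0$. Consequently the monodromy filtration on $H^3_!$ has only three steps, with $\operatorname{gr}_2^M H^3_!=\operatorname{im}(N)$ spanned by the vanishing-cycle classes $\delta_x$, $x\in\Sigma$, with $\operatorname{gr}_3^M H^3_!=\ker(N)/\operatorname{im}(N)$, and with $\operatorname{gr}_4^M H^3_!=H^3_!/\ker(N)$.

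It remains to show these graded pieces are pure of weights $2$, $3$ and $4$. As a $\gal(\overline{\qqq}_p/\qqq_p)$-module, $\bigoplus_x\Phi_x$ is a sum of copies of $L(-1)$ twisted by the quadratic characters attached to the discriminants of the local quadratic forms, hence pure of weight $2$; since $\operatorname{Var}$ carries it onto $\operatorname{im}(N)$, the piece $\operatorname{gr}_2^M H^3_!$ is pure of weight $2$, and then, as $N$ lowers the Frobenius weight by $2$ and induces an isomorphism $\operatorname{gr}_4^M H^3_!\xrightarrow{\sim}\operatorname{gr}_2^M H^3_!$, the piece $\operatorname{gr}_4^M H^3_!$ is pure of weight $4$. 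For the middle piece $\operatorname{gr}_3^M H^3_!=\ker(N)/\operatorname{im}(N)$ one uses the specialization exact sequences attached to the nearby-cycles triangle $i^*\mathbb{V}\to R\Psi\mathbb{V}\to R\Phi\mathbb{V}$, together with their compact-support/Poincar\'e-dual counterparts, to identify it with a subquotient of the degree-$3$ inner cohomology of a smooth projective resolution of the special fibre over $\mathbb{F}_p$; this is pure of weight $3$ by the Weil conjectures, i.e. purity of inner cohomology for smooth varieties over finite fields, together with hard Lefschetz (Deligne) for the smooth projective strata of the resolved special fibre. Putting the three computations together shows that the monodromy filtration on $H^3_!(X_{\overline{\qqq}_p},\mathbb{V})$ has graded pieces pure of weights $2$, $3$ and $4$, which is precisely the weight--monodromy conjecture in the middle degree.

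The Picard--Lefschetz bookkeeping of the second paragraph is essentially formal once Yu's local structure is available. The real work, and the main obstacle, lies in the first and third paragraphs: constructing (or bypassing) a proper $\zz_p$-model of a compactification of $X$ whose boundary introduces no weights outside $\{2,3,4\}$ into $H^3_!$, and making precise, via the vanishing-cycle spectral sequence in unequal characteristic and Poincar\'e duality, the identification of $\ker(N)/\operatorname{im}(N)$ with the pure inner cohomology of a smooth special fibre. Both hinge on the good integral behaviour of the automorphic local system $\mathbb{V}$ and on the compatibility between the inner cohomology of $X$ and the cohomology of the chosen model through nearby cycles.
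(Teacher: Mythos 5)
Your second paragraph, the Picard--Lefschetz bookkeeping, agrees with the paper (which uses \cite{SGA7} Expos\'e XV in the same way), and the identification of $\operatorname{gr}^M_2$ and $\operatorname{gr}^M_4$ as pure of the correct weight via $\operatorname{Var}$, $\operatorname{can}$ and the isomorphism $N\colon\operatorname{gr}^M_4\to\operatorname{gr}^M_2(-1)$ is fine. But the third paragraph contains a circular step, and it is precisely where the hard part of the theorem sits. You need to show that $\operatorname{gr}^M_3 = \ker N/\operatorname{im} N$ is pure of weight $k+3$; this statement is equivalent to the theorem (given that the other two graded pieces are pure), because it is equivalent to the equality $\operatorname{im} N = W_{k+2}$, i.e.\ to the surjectivity of the map the paper calls $\gamma$. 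Your appeal to ``the degree-$3$ inner cohomology of a smooth projective resolution of the special fibre'' is both unspecified (the special fibre of $A_K$ has isolated ordinary double points; blowing them up yields a special fibre with several components, and there is no natural single smooth projective variety whose $H^3_!$ controls $\ker N/\operatorname{im} N$) and, more importantly, it cannot be right in general: in singular weight, $H^4_c$ of the good-reduction Siegel threefold does not vanish (e.g.\ Saito--Kurokawa contributions), so the map $\alpha$ of the paper's Picard--Lefschetz sequence can fail to be surjective, which means $\ker N$ strictly contains $H^3_!(\apsp,\mathbb{V})$ and $\ker N/\operatorname{im} N$ acquires a piece of weight $k+4$ unless one rules this out by some additional input.

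The paper handles exactly this subtlety by splitting into two cases. In regular weight ($a>b>0$) the geometric argument suffices: the cohomological vanishing of Lan--Suh forces $H^4_c$ of the hyperspecial model to vanish, hence $\alpha$ is surjective, $\beta$ is injective by Poincar\'e duality, and $\gamma$ is an isomorphism. In singular weight the paper replaces the geometric vanishing by an automorphic argument: it decomposes $H^3_!$ according to Arthur type, observes that the CAP and Yoshida contributions are two-dimensional Galois representations for which weight--monodromy is already known from modular forms, and for the general type contributions it derives a contradiction from local--global compatibility and the embedding of $\operatorname{coker}\alpha$ into the degree-$4$ compactly supported cohomology of the hyperspecial model. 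Your proposal has no analogue of this automorphic step and therefore does not establish purity of $\operatorname{gr}^M_3$ in singular weight.

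A secondary gap is in your first paragraph. You posit a proper regular $\zz_p$-model of a toroidal compactification that is simultaneously semistable along the boundary and retains only the interior ordinary double points; the existence of such a model with these exact properties is a nontrivial claim you have not substantiated. The paper sidesteps this entirely by invoking Lan--Stroh's Corollary 4.6, which shows directly (without a compactified model) that ordinary and compactly supported cohomology of the generic fibre agree with those of nearby cycles on the open integral model, so that the Picard--Lefschetz exact sequences can be set up on the open scheme itself. If you insist on working with a compactification you also need to justify that weight--monodromy passes to the subquotient $H^3_!\subset H^3(\overline X)$ of the form you have; this is not entirely formal, since the monodromy filtration of a sub- or quotient-object need not be induced by the ambient one.

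In short: the Picard--Lefschetz reduction to purity of the three graded pieces is shared with the paper, but your argument for the middle piece begs the question, and a non-geometric (automorphic) input is genuinely needed when the coefficient system has singular weight.
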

\begin{Rem}
The result, without restrictions on the weight or level, also follows from Arthur's classification of automorphic forms on $\gsp_4$ together with results on weight-monodromy for $\operatorname{GL}_4$ by Caraiani \cites{Caraiani}. The point of the theorem is to give a geometric proof, using the cohomological vanishing theorems of \cites{LanSuhII}. In fact we will need an integral refinement of the theorem for our level-lowering result, which does not follow from automorphic considerations.
\end{Rem}
\begin{Rem}
When $X$ is a non-compact Shimura variety of Hodge type, with minimal compactification $j:X \xhookrightarrow{} X^{\ast}$, one expects the weight-monodromy conjecture to hold for $H^{\bullet}(X^{\ast}, j_{! \ast} \mathbb{V})$.  Indeed, it follows from the usual weight-monodromy conjecture for smooth projective varieties, see Appendix \ref{Appendix:A}. In general the inner cohomology $H_{!}^{\bullet}(X,\mathbb{V})$ is only a sub-quotient of $H^{\bullet}(X^{\ast}, j_{! \ast} \mathbb{V})$ and it is unclear to the author if one should expect that the weight-monodromy conjecture holds for it. However when $X$ is a Siegel threefold, the inner cohomology $H_{!}^{\bullet}(X,\mathbb{V})$ is a direct summand of $H^{\bullet}(X^{\ast}, j_{! \ast} \mathbb{V})$, because it is equal to the cuspidal cohomology.
\end{Rem}
At this point we have all the ingredients to prove Conjecture \ref{Conj1}, but we will actually prove more. We can work with arbitrary level away from $p$ to transfer certain cuspidal automorphic representations $\pi$ of $\gsp_4$ to $G$, which is a geometric incarnation of the Jacquet-Langlands correspondence between $G$ and $\gsp_4$. We will state a short version of our main theorem below, see Theorem \ref{JLtheorem2} for a more general statement.
\begin{restatable}{mainThm}{JLtheorem} \label{JLtheorem} \begin{enumerate}[label=(\arabic*)]
\item Let $\pi$ be a non-endoscopic cohomological cuspidal automorphic representation of $\gsp_4$ such that $\pi_{\infty}$ is in the discrete series and such that $\pi_p$ is ramified and $K(p)$-spherical. Then there is a cuspidal automorphic representation $\sigma$ of $G$ such that $\pi_v \cong \sigma_v$ for finite places $v \not=p$, such that $\sigma_p$ is $K_2(p)$-spherical and with $\sigma_{\infty}$ determined by $\pi_{\infty}$. Moreover, $\sigma$ occurs with multiplicity one in the cuspidal spectrum of $G$.

\item Let $k \ge 0, j \ge 3$ and let $N$ be a squarefree integer with $p \mid N$, then there is an injective map
\begin{align}
\varphi:S_{k,j}[\para(N)]^{p-\text{new}} \xhookrightarrow{} \mathcal{A}^G_{k,j}[K_2(N)]
\end{align}
equivariant for the prime-to-$p$ Hecke operators, which proves Conjecture \ref{Conj1}. \end{enumerate}
\end{restatable}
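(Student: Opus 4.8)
The plan is to compare the two sides inside the $\ell$-adic middle cohomology $H^3(X_{\overline{\qqq}_p},\mbv)$: the Picard--Lefschetz formula will realise the algebraic modular forms for $G$ as (essentially) the part of $H^3_!$ on which the monodromy operator $N$ is nonzero, and Theorem \ref{WMtheorem} will then pin that part down exactly. Concretely, starting from Yu's description \cite{YuParamodular} of the paramodular model $X/\zz_p$ (regular, flat, smooth away from a finite set $\Sigma\subset X(\overline{\mathbb F}_p)$ with prescribed local equations), I would apply \cite{SGA7} to the local system $\mbv$ to obtain the nearby-cycles spectral sequence computing $H^\bullet(X_{\overline{\qqq}_p},\mbv)$ together with its $I_p$-action and the operator $N$. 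The structural input we need is that $N$ factors through the vanishing-cycle module supported on $\Sigma$, and that — after using the Rapoport--Zink uniformisation of \cite{KudlaRapoport, YuSuperSingular} to identify $\Sigma$, compatibly with prime-to-$p$ Hecke correspondences, with the finite set $G(\qqq)\backslash G(\mathbb A^\infty)/(U^pK_2(p))$ — this produces a Hecke-equivariant map $\lambda\colon\mca^G_{k,j}[K_2(N)]\to H^3(X_{\overline{\qqq}_p},\mbv)$ with image in the monodromic part, inducing on $H^3_!$ an isomorphism onto $\mathrm{gr}^W_2H^3_!$ restricted to the $p$-new part. The geometric labour is the bookkeeping: controlling Tate twists and any contribution from the exceptional locus of a resolution of the (not quite nodal) points of $\Sigma$.

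For part (1): since $\pi$ is non-endoscopic with $\pi_\infty$ in the discrete series, Matsushima's formula together with the automorphic description of the cohomology of Siegel threefolds (c.f. \cite{Petersen, Weissauer, Arthur, GeeTaibi}) identifies the $\pi^f$-isotypic part of $H^3_!(X_{\overline{\qqq}_p},\mbv)$ with the four-dimensional spin Galois representation $\rho_\pi$, $\mbv$ being the local system attached to $\pi_\infty$. Since $\pi_p$ is ramified and $\para(p)$-spherical, the structure of $\para(p)$-spherical ramified representations of $\gsp_4(\qqq_p)$ together with local--global compatibility at $p$ (equivalently, the Picard--Lefschetz description) forces $N\neq0$ on $\rho_\pi$, so by Theorem \ref{WMtheorem} the operator $N$ induces an isomorphism $\mathrm{gr}^W_4\rho_\pi\xrightarrow{\sim}\mathrm{gr}^W_2\rho_\pi$; in particular $\mathrm{gr}^W_2H^3_![\pi^f]\neq0$. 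By the first paragraph this graded piece is cut out of the algebraic modular forms, so $\pi^{f,p}$ occurs there, yielding an automorphic representation $\sigma$ of $G$ with $\sigma_v\cong\pi_v$ at finite $v\neq p$, with $\sigma_p$ being $K_2(p)$-spherical, and with $\sigma_\infty$ the finite-dimensional representation of the compact-modulo-centre group $G(\rr)$ dual to $\mbv$, hence determined by $\pi_\infty$; as $G$ is anisotropic modulo its centre, $\sigma$ is automatically cuspidal, and multiplicity one follows from Arthur's classification for $G$ and the non-endoscopic hypothesis (c.f. \cite{GeeTaibi}).

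For part (2): by Faltings--Chai and the BGG resolution, $S_{k,j}[\para(N)]^{p\text{-new}}$ embeds, prime-to-$p$ Hecke-equivariantly, into $H^3_!(X_{\overline{\qqq}_p},\mbv_{k,j})$ with image in the part on which $N$ acts nontrivially, and the hypotheses $k\ge0$, $j\ge3$ ensure that $\mbv_{k,j}$ is an automorphic local system covered by Theorem \ref{WMtheorem}; composing the inclusion with the projection onto $\mathrm{gr}^W_2H^3_!$ and the inverse of $\lambda$ defines $\varphi$. For a non-endoscopic, non-CAP $\pi$, apply part (1) to produce $\sigma$: the source of $\varphi$ on the $\pi^f$-component is one-dimensional (holomorphic discrete series, multiplicity one) and maps into the nonzero space $\mca^G_{k,j}[K_2(N)][\sigma^f]$, so $\varphi$ is injective there; the remaining endoscopic (Yoshida-type) and CAP contributions to $S_{k,j}^{p\text{-new}}$ are treated by the same scheme using the explicit shape of their spin Galois representations (c.f. \cite{Weissauer, GeeTaibi}) and again the fact that being $p$-new forces $N\neq0$. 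Injectivity of $\varphi$ overall then follows because prime-to-$p$ Hecke eigensystems separate isotypic components, and taking $U^p$ of full level recovers Conjecture \ref{Conj1}.

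The main obstacle lies in the first paragraph. First, the points of $\Sigma$ are not literally nodes, so one must check that the Picard--Lefschetz/weight spectral sequence behaves as though they were — controlling the exceptional divisors of a resolution, verifying that $\mathrm{gr}^W_2H^3_!$ and $\mathrm{gr}^W_4H^3_!$ are exactly the two vanishing-cycle pieces, and matching them with algebraic modular forms for $G$ at precisely the level $K_2(p)$ of Ibukiyama's conjecture. Second, and more conceptually, one must know that the $p$-new part of $H^3_!$ is entirely monodromic, i.e. that no $p$-new cuspidal form has vanishing monodromy at $p$: this is what allows Theorem \ref{WMtheorem} to be upgraded from "the symmetric weight-graded pieces correspond under $N$" to "$\lambda$ identifies $\mca^G$ with the weight-two graded piece of the $p$-new part of $H^3_!$", and it rests on the representation theory of $\gsp_4(\qqq_p)$ (paramodular-new vectors, à la Roberts--Schmidt) together with local--global compatibility at $p$.
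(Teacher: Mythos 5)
Your proposal follows essentially the same route as the paper: realise the algebraic modular forms for $G$ as the vanishing-cycle module supported on the singular locus $\Sigma$ (identified with $G(\qqq)\backslash G(\mathbb A^\infty)/U^pK_2(p)$ via Rapoport--Zink uniformisation), then use the Picard--Lefschetz formula and Theorem \ref{WMtheorem} to relate this to the $p$-new, ramified part of $H^3_!$, and finally decompose everything via Arthur's classification for $\gsp_4$ to produce the transfer $\pi\mapsto\sigma$ and the map $\varphi$. Your second ``main obstacle'' (that no $p$-new form has vanishing monodromy) is handled by the hypothesis in the theorem: non-endoscopic $\pi$ with $\pi_p$ ramified and $K(p)$-spherical are generic at $p$ (local--global compatibility plus Theorem \ref{WMtheorem} force type IIa, as in Remark \ref{genericremark}), so the monodromy operator is automatically nonzero.

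There are two points worth flagging. First, a misconception: the singular points of $A_K\otimes\overline{\mathbb F}_p$ have completed local ring $\overline{\mathbb F}_p\llbracket X,Y,Z,W\rrbracket/(XY-ZW)$, i.e. they are \emph{exactly} ordinary double points of a threefold. This is precisely the non-degenerate quadratic singularity that SGA 7 Expos\'e XV treats in one stroke; no resolution is needed, and $R\Phi\mathbb V$ is concentrated in degree $3$, one-dimensional at each $x\in\Sigma$, with the local variation maps being isomorphisms. Your worry about ``controlling the exceptional divisors of a resolution of the (not quite nodal) points'' is unnecessary and, if pursued, would complicate matters.

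Second, a genuine gap: to get multiplicity one for $\sigma$, you invoke ``Arthur's classification for $G$''. But $G=GU_2(D)$ is an \emph{inner form} of $\gsp_4$, and the multiplicity/packet structure on $G$ is not covered by \cite{GeeTaibi} (which treats $\gsp_4$ itself). Indeed, establishing the Jacquet--Langlands relationship with the inner form $G$ is precisely what the theorem is proving, so you cannot feed Arthur-for-$G$ into the argument without being circular. The paper instead proves $m(\sigma)=1$ by a bootstrapping argument from $\gsp_4$: it shows the isotypic summand $P_\sigma=m(\sigma)\,\sigma_f^{U'}\subset\mathcal A^G$ lies in the image of $\alpha$, deduces that the $m(\sigma)$ automorphic representations $\pi_1,\dots,\pi_{m(\sigma)}$ mapping to it are pairwise isomorphic at all finite $v\neq p$, then upgrades to isomorphism at $p$ using genericity (unique generic constituent in each local $L$-packet) or, for CAP types, the explicit list of $K(p)$-spherical Saito--Kurokawa representations, and finally invokes multiplicity one on the $\gsp_4$ side to conclude $m(\sigma)=1$. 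You need this (or an equivalent argument) to close part (1). Aside from that, the proof is on track; the choice of working with the map $\lambda\colon\mathcal A^G\to H^3$ rather than its Poincar\'e dual $\alpha\colon H^3_!\to\mathcal A^G$ is immaterial.
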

\begin{Rem}
Sorensen constructs a Jacquet-Langlands transfer in \cite{SorensenRaising} using the stable trace formula. However he has to assume that there is a certain auxiliary prime $q \not=p$ where $\sigma_q$ is ramified enough, which is never satisfied for paramodular forms of squarefree level. Moreover, while I was writing up this paper, Weissauer and R\"osner informed me that they can construct a general Jacquet-Langlands correspondence between $\gsp_4$ and its inner forms, also using trace formula methods, see their forthcoming article \cite{WeissauerRoesner}.
\end{Rem}
\begin{Rem}
Ibukiyama gives a conjectural characterisation of the image of $\varphi$ when $N=p$. We will give a different characterisation of its image in Theorem \ref{JLtheorem2}, which is probably easier to verify in practice. We will also discuss the image of the map $\pi \mapsto \sigma$ in Theorem \ref{JLtheorem2}.
\end{Rem}
\subsection{Mazur's principle}
In this section we describe a level lowering result for cohomological cuspidal automorphic representations of $\gsp_4$. We start by recalling a classical result of Mazur concerning levels of modular forms, which answers the following question: Given a normalised eigenform $f \in S_2[\Gamma_0(Np)]$ that is $p$-new, is there a congruence $f \equiv g \mod \ell$, for some $\ell \not=p$, with $g \in S_2[\Gamma_0(N)]$? We can translate this congruence into an isomorphism of mod $\ell$ Galois representations $\overline{\rho}_{f,\ell} \cong \overline{\rho}_{g,\ell}$. Since the Galois representation $\rho_{f,\ell}$ is unramified at $p$, a necessary condition for such a congruence to exist is that the Galois representation $\overline{\rho}_{f,\ell}$ is unramified at $p$, and Serre conjectured in \cite{Serre} that the converse should be true. Ribet proved this conjecture in \cite{Ribet}, which was famously used to show that modularity of semi-stable elliptic curves implies Fermat's last theorem. Below we state a slightly weaker version of Ribet's theorem, due to Mazur:
\begin{Thm*}(Mazur's principle, Theorem 6.1 in \cite{Ribet}) \label{classicalMazur}
Assume that $\overline{\rho_{f,\ell}}$ is irreducible, unramified at $p$ and that it has two distinct $\frob_p$ eigenvalues (equivalently, $p \not=1 \mod \ell$). Then there is a normalised eigenform $g \in S_2[\Gamma_0(N)]$ with $\overline{\rho_{g,\ell}} \simeq \overline{\rho_{f,\ell}}$.
\end{Thm*}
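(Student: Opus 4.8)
The plan is to run Ribet's character-group argument, which is the $\gl_2$-prototype of the strategy carried out for $\gsp_4$ in the body of this paper: one analyses the action of inertia at $p$ on the $\ell$-adic Tate module of $J := J_0(Np)$ through the Picard--Lefschetz formula of \cite{SGA7}, and shows that after localising at the Hecke maximal ideal cut out by $f \bmod \ell$ the part that is ``new at $p$'' cannot host an unramified two-dimensional representation whose Frobenius has two distinct eigenvalues. Concretely, let $\mathbf T$ be the Hecke algebra acting on $S_2[\Gamma_0(Np)]$, generated over $\zz$ by the $T_q$ (for $q \nmid Np$) and by $U_p$, and let $\mathfrak m \subset \mathbf T$ be the maximal ideal of residue characteristic $\ell$ attached to $f$; since $\overline{\rho_{f,l}}$ is irreducible, $\mathfrak m$ is non-Eisenstein. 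It is enough to show that $\overline{\rho_{f,l}}$ occurs inside $J_0(N)[\ell]$ compatibly with the prime-to-$p$ Hecke action: a normalised eigenform $g \in S_2[\Gamma_0(N)]$ in the resulting eigenspace then has $\overline{\rho_{g,l}} \cong \overline{\rho_{f,l}}$, because a semisimple two-dimensional mod-$\ell$ representation of $\galq$ is determined by its Frobenius traces at the unramified primes.

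Suppose, for contradiction, that $\overline{\rho_{f,l}}$ does not occur in $J_0(N)[\ell]$, so that $\mathfrak m$ is purely $p$-new. The geometric input is the following. By Deligne--Rapoport, $X_0(Np)$ has a semistable model over $\zz_p$ whose special fibre consists of two copies of $X_0(N)_{\mathbb F_p}$ meeting transversally at the supersingular points; hence the connected N\'eron model of $J$ over $\zz_p$ is a $\mathbf T$-equivariant extension of $J_0(N)^2_{\mathbb F_p}$ by a torus $\mathcal T$ whose geometric character group $X$ is a finitely generated free $\zz$-module carrying a perfect monodromy pairing, and on which $U_p$ and the Frobenius $\frob_p$ act through Ribet's formulae. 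The two degeneracy maps $X_0(Np) \rightrightarrows X_0(N)$ induce an isogeny $J^{p\text{-old}} \times J^{p\text{-new}} \to J$, in which $J^{p\text{-old}}$ is isogenous to $J_0(N)^2$ and has good reduction at $p$, whereas $J^{p\text{-new}}$ has purely toric reduction at $p$. Localising at the non-Eisenstein ideal $\mathfrak m$ --- and discarding the case in which $\ell$ divides the degree of this isogeny, where a congruence between $f$ and a level-$N$ eigenform already exists --- we get $(J[\ell])_{\mathfrak m} = (J^{p\text{-old}}[\ell])_{\mathfrak m} \oplus (J^{p\text{-new}}[\ell])_{\mathfrak m}$ as $\mathbf T[\galq]$-modules, the first summand being zero, while the character group of the torus of $J^{p\text{-new}}$ is identified with $X_{\mathfrak m} \neq 0$. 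By Grothendieck's description in \cite{SGA7}, $(J^{p\text{-new}}[\ell])_{\mathfrak m}$ is an extension of $\Hom(X_{\mathfrak m}/\ell, \mathbb F_\ell)$ by $\Hom(X_{\mathfrak m}/\ell, \mu_\ell)$ whose monodromy operator --- induced by the monodromy pairing together with the tame character $t_\ell \colon I_p \to \zz_\ell(1)$ --- is an isomorphism; hence its subgroup of inertia invariants is exactly $\Hom(X_{\mathfrak m}/\ell, \mu_\ell)$, on which $\frob_p$ acts through the scalar $\overline{p \cdot U_p} \in \mathbf T/\mathfrak m$ (by Ribet's computation of the $U_p$- and $\frob_p$-actions on $X$).

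Now to extract the contradiction. By multiplicity one at the non-Eisenstein maximal ideal $\mathfrak m$, the module $J[\mathfrak m]$ realises $\overline{\rho_{f,l}}$, and under our assumption $J[\mathfrak m] \subseteq (J^{p\text{-new}}[\ell])_{\mathfrak m}$. Since $\overline{\rho_{f,l}}$ is unramified at $p$, the inertia group $I_p$ acts trivially on $J[\mathfrak m]$, so $J[\mathfrak m]$ is contained in the inertia invariants $\Hom(X_{\mathfrak m}/\ell, \mu_\ell)$, and therefore $\frob_p$ acts on $\overline{\rho_{f,l}}$ as the single scalar $\overline{p \cdot U_p}$. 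But the hypothesis that $\overline{\rho_{f,l}}(\frob_p)$ has two distinct eigenvalues --- equivalently, that $p \not\equiv 1 \bmod \ell$ --- says precisely that $\frob_p$ does not act as a scalar on $\overline{\rho_{f,l}}$. This is the desired contradiction, so $\overline{\rho_{f,l}}$ does occur in $J_0(N)[\ell]$ compatibly with the prime-to-$p$ Hecke action, and the eigenform $g$ exists.

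The step I expect to be the main obstacle is assembling the geometric input of the second paragraph: the Deligne--Rapoport model of $X_0(Np)$ and the structure of the N\'eron model of $J_0(Np)$ at $p$; Grothendieck's description in \cite{SGA7} of the $I_p$-action on the Tate module via the character group and its monodromy pairing (so that the inertia invariants of the purely toric part are read off correctly); Ribet's identification of how $U_p$ and $\frob_p$ act on $X$ (which is what forces $\frob_p$ to act by the scalar $\overline{p \cdot U_p}$); and the multiplicity-one statement for $J_0(Np)$ at a non-Eisenstein $\mathfrak m$, together with the fact that the component group of the special fibre is Eisenstein and may therefore be discarded after localising. With these in hand, the remaining deduction --- lies in the inertia invariants, hence $\frob_p$ is scalar, hence the Frobenius eigenvalues coincide --- is purely formal.
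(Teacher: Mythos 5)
The paper does not actually prove this theorem: it is quoted verbatim as a known background result, attributed to Ribet's Theorem~6.1, to motivate the $\gsp_4$-analogue (Theorem~\ref{mazursprinciple}). So there is no in-paper proof to compare against; what you have written is a reconstruction of Ribet's original argument, and as such it is essentially correct. The ingredients you list — the Deligne--Rapoport semistable model of $X_0(Np)$, Grothendieck's filtration of the $\ell$-adic Tate module of the N\'eron model via the character group $X$ and the monodromy pairing, the Eisenstein-ness of the component group $\Phi$ so that $\Phi_{\mathfrak m}=0$ and the monodromy map is an isomorphism after localising, the identification of $J[\ell]_{\mathfrak m}^{I_p}$ with the toric piece $\Hom(X_{\mathfrak m}/\ell,\mu_\ell)$ once the abelian and component-group contributions die, and Ribet's identification of $\frob_p$ with a Hecke operator so that it acts as a scalar on the $\mathfrak m$-torsion $J[\mathfrak m]$ — are exactly the points of the classical proof, and your derivation of the contradiction (unramified $\Rightarrow$ lands in the inertia invariants $\Rightarrow$ $\frob_p$ is scalar on $\overline{\rho_{f,\ell}}$ $\Rightarrow$ no two distinct eigenvalues) is the right formal conclusion.

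Two small comments. First, the detour through the isogeny $J^{p\text{-old}}\times J^{p\text{-new}}\to J$, with the parenthetical ``discard if $\ell$ divides the degree,'' is more awkward than Ribet's route and is not needed: the N\'eron model of $J_0(Np)$ itself has special fibre with abelian part $J_0(N)^2_{\mathbb F_p}$, so after localising at $\mathfrak m$ (under your running assumption that $\overline{\rho_{f,\ell}}$ does not occur in $J_0(N)[\ell]$) the abelian part already vanishes and one reads off $J[\ell]_{\mathfrak m}^{I_p}=\Hom(X_{\mathfrak m}/\ell,\mu_\ell)$ directly. Second, the precise scalar ($p\,U_p$ versus $p\,U_p^{-1}$, or with a sign from the Atkin--Lehner involution) is immaterial here since $U_p^2=1$ on the $p$-new part in weight two; what matters for the contradiction is only that $\frob_p$ acts through $p$ times an element of $\mathbf T$ and hence through a scalar on $J[\mathfrak m]$. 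Finally, it is worth noting (as you do implicitly) that this is precisely the $\gl_2$-prototype of the paper's own argument: in Theorem~\ref{mazursprinciple} there is no Jacobian, so the character group $X$ is replaced by $H^3_{\Sigma}(\apsp,\mathbb V)$ via the Picard--Lefschetz formula, the component group by $\Theta=\coker\gamma$, and the Eisenstein-ness of $\Phi$ by the vanishing $\Theta=0$ (Proposition~\ref{componentgroupiseisenstein}), which rests on the Lan--Suh torsion-vanishing theorem rather than on Deligne--Rapoport.
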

Our second main result is the following analogue of Mazur's principle for cohomological cuspidal automorphic representations of $\gsp_4$. 
\begin{restatable}{mainThm}{mazursprinciple} 
\label{mazursprinciple}
Let $\pi$ be a cuspidal automorphic representation of $\gsp_4$ that is cohomological of weight $a > b > 0$ such that $\pi_p$ is ramified and $K(p)$-spherical. Let $U=U^p \cdot K(p) \subset \gsp_4(\mathbb{A}_f)$ be a neat compact open subgroup such that $\pi^U \not=0$. Let $\ell \not=p$ be a prime such that $\overline{\rho_{\pi, \ell}}$ is irreducible, such that the group $U_{\ell}$ is hyperspecial at $\ell$ and such that $\ell>a+b+4$.
Then if $\overline{\rho_{\pi, \ell}}$ is unramified at $p$ and has four distinct $\frob_p$ eigenvalues, there exists a cuspidal automorphic representation $\pi'$, of the same weight and level $U^p$ away from $p$, such that $\overline{\rho_{\pi, \ell}}=\overline{\rho_{\pi',\ell}}$ and such that $\pi'_p$ is unramified.
\end{restatable}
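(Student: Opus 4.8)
The plan is to prove Theorem \ref{mazursprinciple} by transporting the congruence geometrically, using the weight-monodromy theorem (Theorem \ref{WMtheorem}) together with the vanishing-cycles machinery of the Picard--Lefschetz formula. The starting point is to realise $\overline{\rho_{\pi,\ell}}$ inside the mod-$\ell$ \'etale cohomology $H^3_{!}(X_{\overline{\qqq}_p},\mathbb{V}_{\mathcal{O}/\lambda})$ of the paramodular Siegel threefold $X/\qqq_p$ of level $U=U^p K(p)$, where $\mathbb{V}$ is the automorphic local system of weight $(a,b)$. The hypotheses $\ell>a+b+4$ (a Fontaine--Laffaille-type bound ensuring the integral cohomology is well behaved and torsion-free in the relevant degree after localising at the maximal ideal $\mathfrak{m}$ attached to $\overline{\rho_{\pi,\ell}}$) and the irreducibility of $\overline{\rho_{\pi,\ell}}$ are exactly what one needs so that $H^3_{\mathfrak{m}}$ agrees with $H^3_{!,\mathfrak{m}}$ and carries $\overline{\rho_{\pi,\ell}}$ as a subquotient; this is the integral refinement of Theorem \ref{WMtheorem} advertised in the abstract.

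Next I would run the Picard--Lefschetz / nearby-cycles exact sequence for the semistable model of $X$ over $\zz_p$ whose special fibre and singularities are described by Yu's work \cite{YuParamodular}. This produces a sequence relating $H^3_{!}(X_{\overline{\qqq}_p})$, the cohomology of the (smooth) strata of the special fibre, and the vanishing-cycles term supported on the singular points, which by the Kudla--Rapoport / Yu uniformisation \cite{KudlaRapoport,YuSuperSingular} is identified with a space of algebraic modular forms on $G(\qqq)\backslash G(\mathbb{A}^\infty)/U^p K_2(p)$. The monodromy operator $N$ acts through this sequence, and the inertia-invariants $H^3_{!}(X_{\overline{\qqq}_p})^{I_p}$ are computed by the kernel/cokernel of $N$. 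The crux of Mazur's argument in this setting is: if $\overline{\rho_{\pi,\ell}}$ is unramified at $p$, then $\overline{\rho_{\pi,\ell}}$ already appears in the inertia-invariant part, so by the weight-monodromy/purity statement it must appear in the cohomology of the \emph{good-reduction} boundary strata — i.e.\ in the cohomology of a Siegel threefold of level $U^p$ with \emph{hyperspecial} (unramified) level at $p$. The four-distinct-$\frob_p$-eigenvalues hypothesis is what rules out the ``error terms'': it forces the relevant eigenvalue of $\frob_p$ on the $N$-image (a Tate twist of an eigenvalue, hence a forbidden ratio $p$ or $p^2$ or $p^3$ among the four roots) not to occur, guaranteeing that the congruence does not get absorbed into the monodromy and that $\overline{\rho_{\pi,\ell}}$ genuinely descends.

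More concretely, the key step is the following ``two out of the exact sequence'' manipulation. Write $\overline{H}=H^3_{\mathfrak{m}}(X_{\overline{\qqq}_p},\mathbb{V}_{\mathcal{O}/\lambda})$. From the Rapoport--Zink weight spectral sequence and Theorem \ref{WMtheorem} one gets that $\overline{H}^{I_p}$ and $\overline{H}_{I_p}$ sit in exact sequences whose outer terms involve $H^\bullet$ of the smooth special-fibre strata $X^{\mathrm{hs}}_{\overline{\mathbb{F}}_p}$ (which by smooth base change computes level-$U^p$ unramified-at-$p$ cohomology) and whose middle term involves $H^0$ and $H^2$ of the singular locus with coefficients in $\mathbb{V}$, i.e.\ algebraic modular forms for $G$. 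Since $\overline{\rho_{\pi,\ell}}$ is unramified at $p$, $\mathfrak{m}$ contributes to $\overline{H}^{I_p}=\overline{H}$, and since its four $\frob_p$-eigenvalues are distinct, the Eichler--Shimura relation (or the characteristic polynomial of $\frob_p$ on vanishing cycles, which is a product of factors $T-p^i\alpha$) shows $\mathfrak{m}$ cannot lie in the support of the vanishing-cycles/algebraic-modular-forms term; therefore $\mathfrak{m}$ lies in the support of $H^3_{\mathfrak{m}}(X^{\mathrm{hs}}_{\overline{\mathbb{F}}_p},\mathbb{V})$, which by Matsushima/automorphic description yields a cuspidal $\pi'$ of weight $(a,b)$, level $U^p$ away from $p$, unramified at $p$, with $\overline{\rho_{\pi',\ell}}\cong\overline{\rho_{\pi,\ell}}$.

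The main obstacle I anticipate is controlling the \emph{integral} structure: the Picard--Lefschetz formula and weight-monodromy are cleanest with $\qqq_\ell$-coefficients, but Mazur's principle requires the mod-$\ell$ statement, so one must ensure that localising at $\mathfrak{m}$ kills the torsion and that the relevant $\mathcal{O}$-lattices in the weight spectral sequence degenerate integrally. This is precisely where $\ell>a+b+4$ and the hyperspeciality of $U_\ell$ enter (Fontaine--Laffaille range, plus \cite{LanSuhII}-type vanishing to see that $H^i_{\mathfrak{m}}$ is concentrated in degree $3$ so there are no torsion obstructions from neighbouring degrees), and verifying these integral inputs carefully — rather than the formal deduction sketched above — will be the technical heart of the argument. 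A secondary subtlety is the passage from ``$\mathfrak{m}$ in the support of level-$U^p$ unramified cohomology'' to the existence of an honest cuspidal automorphic \emph{representation} $\pi'$ with the predicted weight: this uses the automorphic description of the inner cohomology of Siegel threefolds (\cite{Petersen,Weissauer,Arthur,GeeTaibi}) exactly as in the proof of Theorem \ref{WMtheorem} for singular weights, together with the irreducibility of $\overline{\rho_{\pi,\ell}}$ to ensure the contribution is genuinely cuspidal and not Eisenstein.
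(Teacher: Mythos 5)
Your high-level scaffolding (nearby cycles, the singular locus as a Shimura set for $G$, integral torsion-freeness via Lan--Suh, identifying $\mathfrak{m}$ in middle-degree cohomology) matches the paper's set-up, but the heart of your argument does not work, and it misses the two local inputs the paper actually uses.

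First, you claim that because $\overline{\rho_{\pi,\ell}}$ is unramified, ``$\mathfrak{m}$ contributes to $\overline{H}^{I_p}=\overline{H}$''. This does not follow: the mod-$\ell$ cohomology localised at $\mathfrak{m}$ is only isomorphic to $\overline{\rho_{\pi,\ell}}^{\oplus n}$ after semi-simplification, and inertia can act nontrivially through a nilpotent $N$ even when the semi-simplification is unramified. What the paper actually extracts from irreducibility of $\overline{\rho_{\pi,\ell}}$ is that some single copy of $\overline{\rho_{\pi,\ell}}$ sits inside $\ker(N\otimes\mathbb{F})$, not that inertia acts trivially on all of $\overline{H}$.

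Second, and more seriously, your use of the ``four distinct $\frob_p$-eigenvalues'' hypothesis is wrong. You suggest it rules out Tate-twist ratios $p,p^2,p^3$ among the eigenvalues and hence forces $\mathfrak{m}$ out of the support of the vanishing-cycles term. But ``distinct'' does not forbid ratios of $p$, and for a type IIa representation the Frobenius eigenvalue on the vanishing-cycles/$\operatorname{Var}$ piece (which by Lemma~\ref{eigenvaluelemma} equals $p$ times the paramodular Atkin--Lehner eigenvalue) genuinely \emph{is} one of the four roots — so under the paper's contradiction hypothesis the localised vanishing-cycles module is nonzero, not zero. The actual mechanism is the following eigenvalue--dimension count (which you should state): localising at $\mathfrak{m}$ pins down a \emph{single} Frobenius eigenvalue on $H^3(\apsp,R\Phi\mathbb{V})_{\mathfrak{m}}\otimes\mathbb{F}$ via Lemma~\ref{eigenvaluelemma}, $\alpha\otimes\mathbb{F}$ is a Frobenius-equivariant surjection of that single-eigenvalue target from a source splitting into four generalised eigenspaces of equal rank, so $\ker(\alpha\otimes\mathbb{F})$ carries at most three Frobenius eigenvalues. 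By Corollary~\ref{freeness}, $\beta\otimes\mathbb{F}$ is injective, hence $\ker(N\otimes\mathbb{F})=\ker(\alpha\otimes\mathbb{F})$. Placing a copy of the irreducible, unramified $\overline{\rho_{\pi,\ell}}$ (which has four distinct eigenvalues) inside $\ker(N\otimes\mathbb{F})$ then gives the contradiction. Your sketch never records the Atkin--Lehner computation that collapses the vanishing-cycles term to a single eigenvalue after localisation at $\mathfrak{m}$, nor the injectivity of $\beta\otimes\mathbb{F}$ (Corollary~\ref{freeness}) which upgrades the rational weight--monodromy statement to an integral one; both are essential and cannot be replaced by ``weight--monodromy forces a descent to the hyperspecial-level threefold'', which is not a literal stratum of the paramodular special fibre.

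Finally, the proof is genuinely by contradiction: one assumes \emph{all} constituents $\rho_{\pi',\ell}$ of $H^3_{!,\mathfrak{m}}\otimes L$ are ramified at $p$ (so each is of type IIa and contributes exactly one line to $\im\alpha$), and derives that $\overline{\rho_{\pi,\ell}}$ could have had at most three distinct eigenvalues. Your direct ``transport the congruence to the good-reduction piece'' strategy is not the route the paper takes, and as written it relies on the two incorrect steps above.
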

\begin{Rem}
We need the assumption that $a>b>0$, that $\ell>a+b+4$ and that $K_{\ell}$ is hyperspecial to apply Theorem 10.1 of \cite{LanSuhII}, which is a torsion vanishing result. Our proof only needs this torsion vanishing result after localising at a `nice' maximal ideal of the Hecke algebra, so it should be possible to relax these assumptions. After a first version of this paper appeared on the arxiv, Haining Wang proved a version of Proposition \ref{componentgroupiseisenstein} localised at such a maximal ideal (see Proposition 5.3 of \cite{Wang}). This means that the assumption that $a>b>0$ and that $K_{\ell}$ hyperspecial is hyperspecial is no longer necessary. The assumption that $\overline{\rho_{\pi, \ell}}$ is irreducible and has four distinct Frobenius eigenvalues is crucial for the actual level lowering argument. We note that Haining Wang proves a version of Theorem \ref{mazursprinciple} under different assumptions, see Theorem 5.1 of \cite{Wang}. To be precise he has weaker assumptions on the Frobenius eigenvalues, but has to assume semi-simplicity of the middle degree \'etale cohomology of the Siegel threefold.
\end{Rem}
An essential ingredient in the proof of Mazur's principle is the fact that the component group of the Jacobian of the modular curve $X_0(Np)$ is Eisenstein. Following Jarvis and Rajaei \cites{Jarvis, Rajaei} we define a cohomological analogue of the component group and show that it is zero if the aforementioned torsion-vanishing result holds. This statement can be seen as an integral refinement of the weight-monodromy conjecture because the component group is defined as the cokernel of a certain monodromy operator.

\begin{Rem}
Gee and Geraghty \cite{GeeGeraghty} prove more general level lowering results for cuspidal automorphic representations of $\gsp_4$, under the assumption that $\overline{\rho_{\pi, \ell}}$ has large image and is `ordinary', see Theorem 7.5.2 of op. cit. for a precise statement. It might be possible to generalise their results using recent work of Yamauchi \cites{Yamauchi}. Moreover Sorensen proves a potential level lowering result for $\gsp_4$ in \cite{SorensenPotential}.
\end{Rem}
\begin{Rem}
The paramodular group $\para(Np)$ is not neat, so a level lowering result from $S_{k,j}[\para(Np)]$ to $S_{k,j}[\para(N)]$ does not follow. However, it should be possible to prove such a result by cleverly choosing some auxiliary level structure (but we would need some extra conditions on $\overline{\rho_{\pi, \ell}}$, c.f. the main theorem of \cite{Jarvis}).
\end{Rem}

\subsection{Overview of the paper}
We start by giving an overview of the theory of Siegel modular forms and automorphic representations of $\gsp_4(\mathbb{A})$ in Section \ref{automorphic}. In Section \ref{cohomology} we discuss the Vogan-Zuckerman classification and Arthur's classification and use these to describe the cohomology of Siegel threefolds. We define algebraic modular forms for $G$ in Section \ref{Sec:AlgModForms}. In Section \ref{integral} we survey some results on integral models of Siegel threefolds and describe the combinatorics of their supersingular loci explicitly in terms of Shimura sets for $G$. Section \ref{Eisenstein} is the technical heart of the paper where we prove the vanishing of the component group. We deduce the main theorems from this in Section \ref{applications}.
\section{Automorphic forms} \label{automorphic}
In this section we will discuss (cuspidal) automorphic representations of $\gsp_4(\mathbb{A})$. We will first describe classical Siegel modular forms of vector-valued weight for $\gsp_4$, which give rise to such automorphic representations. We then carry out some local computations needed in our proof of Mazur's principle. We end by recalling the Galois representations associated with cohomological cuspidal automorphic representations of $\gsp_4(\mathbb{A})$. 

\subsection{Siegel modular forms of genus \texorpdfstring{$2$}{2}}
We define the group $\gsp_4$ as the group scheme over $\mathbb{Z}$ defined by the functor sending a commutative ring $R$ to
\begin{align}
    \left\{(g, \lambda) \in \operatorname{GL}_4(R) \times R^{\times} \; | \; J = \lambda g J g^t \right\},
\end{align}
where $J=\TJM{0}{1_2}{-1_2}{0}$ and $1_2$ is the $2 \times 2$ identity matrix. The projection map $(g, \lambda) \mapsto \lambda$ is called the similitude character and is denoted by
\begin{align}
    \operatorname{sim}:\gsp_4 \to \mathbb{G}_m.
\end{align} The group $\gsp_4(\rr)$ acts transitively on the Siegel upper half space $\mathbb{H}_2^{\pm}$ defined as
\begin{align}
     \{z=x+iy \in \operatorname{Mat}_{2 \times 2}(\mathbb{C}) \; | \; z \text{ symmetric and } y \text{ positive- or negative definite} \}
\end{align}
by $\TJM{a}{b}{c}{d} \cdot z:=(az+b)(cz+d)^{-1}$. Let $\Gamma \subset \gsp_4(\qqq)$ be a congruence subgroup and for $k \ge 0, j \ge 0$ consider the irreducible representation of $\gl_2(\cc)$ given by 
\begin{align}
    W_{k,j}:=\sym^k V \otimes \operatorname{det}^j V,
\end{align}
where $V$ is the standard representation of $\gl_2(\cc)$.
\begin{Def}
A Siegel modular form of weight $k,j$ and level $\Gamma$ is a holomorphic function $f: \mathbb{H}_2^{\pm} \to W_{k,j}$ such that such that for all $z \in \mathbb{H}_2^{\pm}$ and all $\TJM{a}{b}{c}{d} \in \Gamma$ we have
\begin{align}
f \left(\frac{az+b}{cz+d} \right)&=\rho_{k,j}(cz+d) f(z).
\end{align}
A Siegel modular form is called a cusp-form if $\Phi(f)=0$, where $\Phi(f)$ is the Siegel operator (c.f. \cite[pp. 12]{vdGeer}), we write $S_{k,j}[\Gamma]$ for the $\cc$-vector space of cusp forms of weight $k,j$ and level $\Gamma$.
\end{Def}
We will be particularly interested in the case that $\Gamma$ is given by the paramodular group $\para(N)$ for some squarefree $N$. Here 
\begin{align}
    \para(N)=\left \{ g \in \gsp_4(\qqq) \cap 
    \left(\begin{smallmatrix}
    \mathbb{Z} &  N \zz & \zz &  \zz \\
    \zz & \zz & \zz &  N^{-1} \zz \\
     \zz & N \zz & \zz &  \zz \\
    N \zz & N \zz & N \zz &  \mathbb{Z}
    \end{smallmatrix} \right) \right \}.
\end{align}
We usually like to think of our levels as compact open subgroups in $\gsp_4(\mathbb{A}_f)$ when translating Siegel modular forms into automorphic representations. For a prime $p$ we define
\begin{align}
    H=H(p)&:=\gsp_4(\mathbb{Z}_p) \\ 
    K=K(p)&:= \left \{ g \in \gsp_4(\mathbb{Q}_p) \cap 
    \left(\begin{smallmatrix}
    \mathbb{Z}_p & p \mathbb{Z}_p & \mathbb{Z}_p &  \mathbb{Z}_p \\
    \mathbb{Z}_p & \mathbb{Z}_p & \mathbb{Z}_p &  p^{-1} \mathbb{Z}_p \\
     \mathbb{Z}_p & p\mathbb{Z}_p & \mathbb{Z}_p &  \mathbb{Z}_p \\
    p \mathbb{Z}_p & p \mathbb{Z}_p & p \mathbb{Z}_p &  \mathbb{Z}_p
    \end{smallmatrix} \right)
    \; : \operatorname{sim}(g) \in \mathbb{Z}_p^{\times} \right \} \\
    Q=Q(p)&:= K(p) \cap H(p),
\end{align}
we call $K$ the paramodular group and $Q$ the Klingen parahoric. The paramodular group $K(N)$ now corresponds to the compact open subgroup
\begin{align}
    \prod_{p \mid N} K(p) \prod_{p \nmid N} \gsp_4(\zz_p) \subset \gsp_4(\mathbb{A}_f).
\end{align}
We will implicitly use the following result throughout the paper (c.f. \cite[Theorem 2]{AsgariSchmidt}):
\begin{Prop} \label{SiegeltoAutomorphic}
Given a normalised eigenform $f \in S_{k,j}(\para(N))$ with $N$ squarefree then there is an associated cuspidal automorphic representation $\pi$ of $\gsp_4(\mathbb{A})$ such that $\pi_{\infty}$ is in the holomorphic discrete series and such that $\pi_{f}$ is unramified for $p \nmid N$ and $\pi_{f}^{K(p)} \not=0$ for $p \mid N$.
\end{Prop}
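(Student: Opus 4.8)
I would prove this by adelizing the classical Siegel cusp form $f$ and then extracting an irreducible constituent of the automorphic representation it generates, reading off the local components from the level structure at the finite places and from the weight at $\infty$.

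First I would invoke strong approximation. Since $\symp_4$ is simply connected and $\symp_4(\rr)$ is noncompact, $\symp_4(\qqq)$ is dense in $\symp_4(\mathbb{A}^\infty)$; combined with the similitude character $\operatorname{sim}\colon \gsp_4 \to \mathbf{G}_m$ and class number one for $\mathbf{G}_m$ over $\qqq$ at level $\widehat{\zz}^\times$ (note $\operatorname{sim}$ maps the finite part of the paramodular group onto $\widehat{\zz}^\times$), this gives a decomposition $\gsp_4(\mathbb{A}) = \gsp_4(\qqq) \cdot \bigl( \gsp_4(\rr)^+ \cdot \para(N) \bigr)$, where $\gsp_4(\rr)^+$ is the subgroup of positive similitude and $\para(N) \subset \gsp_4(\mathbb{A}^\infty)$ is the compact open from the excerpt. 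Hence the arithmetic group $\para(N) \subset \gsp_4(\qqq)$ equals $\gsp_4(\qqq) \cap \bigl( \gsp_4(\rr)^+ \cdot \para(N) \bigr)$, and there is a natural bijection between $\para(N) \backslash \mathbb{H}_2^{\pm}$ and the adelic double quotient $\gsp_4(\qqq) \backslash \bigl( \mathbb{H}_2^{\pm} \times \gsp_4(\mathbb{A}^\infty) \bigr) / \para(N)$.

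Using this I would define $\phi_f \colon \gsp_4(\qqq) \backslash \gsp_4(\mathbb{A}) \to W_{k,j}$ by $\phi_f(\gamma g_\infty k) := \rho_{k,j}\bigl( j(g_\infty, i 1_2) \bigr)^{-1} f\bigl( g_\infty \cdot i 1_2 \bigr)$, where $j$ is the usual automorphy factor; composing with a fixed lowest-weight functional yields a scalar automorphic form if desired. Well-definedness is exactly the transformation law defining $f$, and one checks directly that $\phi_f$ is right $\para(N)$-invariant, $K_\infty$-finite and $Z(\rr)$-finite of the type dictated by $W_{k,j}$, of moderate growth, and — crucially — cuspidal: the vanishing of the constant terms of $\phi_f$ along the Siegel and Klingen parabolics is equivalent to $\Phi(f) = 0$ together with the vanishing of its translates, since every $\qqq$-rational proper parabolic of $\gsp_4$ is $\gsp_4(\qqq)$-conjugate to a standard one and the cusps of $\para(N) \backslash \mathbb{H}_2^{\pm}$ are covered accordingly. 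Thus $\phi_f$ lies in $L^2_{\mathrm{cusp}}\bigl( \gsp_4(\qqq) \backslash \gsp_4(\mathbb{A}) \bigr)$, which decomposes discretely, so the subrepresentation generated by $\phi_f$ contains an irreducible summand $\pi \cong {\bigotimes_v}' \pi_v$. Because $f$ is an eigenform for the prime-to-$N$ Hecke algebra, we may take $\pi$ so that $\phi_f$ has nonzero projection to it (e.g. invoking strong multiplicity one for the cuspidal spectrum of $\gsp_4$), whence $\pi_p$ is unramified for all $p \nmid N$ with Satake parameters matching the Hecke eigenvalues of $f$.

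It remains to pin down $\pi_\infty$ and $\pi_p$ for $p \mid N$. At $p \mid N$ the $p$-component of $\para(N)$ is the paramodular group $K(p)$, so $\phi_f$, hence some nonzero vector of $\pi_p$, is $K(p)$-fixed, i.e. $\pi_p^{K(p)} \ne 0$. At $\infty$, the $(\mathfrak{g}, K_\infty)$-module generated by $\phi_f$ is a lowest-weight module: holomorphy of $f$ forces the anti-holomorphic part of $\mathfrak{p}_\cc$ to annihilate the generating vector, whose $K_\infty$-type is $W_{k,j}$ up to the determinant twist fixed by the central character; for $f$ a cusp form in the relevant weight range this module is the holomorphic discrete series of $\gsp_4(\rr)$ with the Harish–Chandra parameter attached to $(k,j)$, as in \cite[Theorem 2]{AsgariSchmidt}. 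This archimedean identification is the main obstacle: one must match the classical weight $(k,j)$ with the Harish–Chandra parameter in the right normalisation and verify, via the classification of the relevant $(\mathfrak{g},K_\infty)$-modules, that cuspidality excludes the limit-of-discrete-series (and, in the non-tempered cases, pins down the correct member of the $A$-packet); by contrast the global steps — strong approximation and the translation of $\Phi(f)=0$ into cuspidality of $\phi_f$ — are routine once the conventions for $\para(N)$ as an arithmetic group versus an adelic compact open are reconciled.
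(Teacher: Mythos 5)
The paper offers no proof of this proposition; it is stated as a citation to \cite[Theorem 2]{AsgariSchmidt}, and your sketch is in essence the adelization argument given there, so the approach matches. One caution on a parenthetical: strong multiplicity one is known to \emph{fail} for $\gsp_4$ (Yoshida lifts give nonisomorphic cuspidal representations agreeing at almost all places), but you do not actually need it — discrete decomposability of the cuspidal spectrum already guarantees that $\phi_f$ projects nontrivially onto some irreducible constituent $\pi$, and any such $\pi$ inherits right $\gsp_4(\zz_p)$-invariance for $p\nmid N$, a $K(p)$-fixed vector for $p\mid N$, and the lowest-weight archimedean type directly from $\phi_f$; simply drop that aside.
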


\subsection{The local Langlands correspondence} \label{local}
This section is about irreducible smooth representations $\pi$ of $\gsp_4(\qqq_p)$, that are ramified and $K(p)$-spherical, and their associated Weil-Deligne representations under the local Langlands correspondence of \cite{locallanglands}. Table A.13 of \cite{LocalNewforms} describes all of these representations and their associated Weil-Deligne representations can be found in Chapter 2.4 of op. cit. These representations break up into $5$ types (IIa, IVc, Vb, Vc, VIc) of which only one (type IIa) is generic (c.f. Table A1 of op. cit.). 

Let $v$ be the normalised absolute value $v:\qqq_p^{\times} \to \overline{\qqq_{\ell}}$, let $\sigma$ be an unramified character of $\qqq_p^{\times}$ and let $\chi$ be a character of $\qqq_p^{\times}$ such that $\chi^2 \not= v^{\pm 1}$ and $\chi \not= v^{\pm 3/2}$. We consider all of these as character of the Weil group via local class field theory (uniformisers go to geometric Frobenius elements). The representation $\chi \operatorname{St}_{\gl_2} \rtimes \sigma$ of type IIa is generic and its associated Weil-Deligne representation is given by:
\begin{align}
\left( \begin{smallmatrix}
\chi^2 \sigma & 0 & 0 & 0 \\
0 & v^{1/2} \chi \sigma &  0 & 0 \\
0 & 0 & v^{-1/2} \chi \sigma & 0 \\
0 & 0 & 0 & \sigma 
\end{smallmatrix} \right), \qquad 
N:=\left(\begin{smallmatrix}
0 & 0 & 0 & 0 \\
0 & 0 & 1 & 0 \\
0 & 0 & 0 & 0 \\
0 & 0 & 0 & 0 
\end{smallmatrix} \right).
\end{align}
We note that the central character of this representation is $\chi^2 \sigma^2$ which is also equal to the similitude character of the Weil-Deligne representation. Furthermore, the Weil-Deligne representation satisfies the weight-monodromy conjecture, i.e., it is pure in the sense of Taylor-Yoshida \cite[pp.6]{TaylorYoshida}. More concretely, this means that the monodromy operator has rank `as large as possible'.

\begin{Rem} \label{genericremark}
The other four types (IVc, Vb, Vc, VIc) are not generic, and their associated Weil-Deligne representations do not satisfy the weight-monodromy conjecture. After we prove Theorem \ref{WMtheorem} (weight-monodromy), we can use this observation to deduce that certain representations must be of type IIa, which will be used in the proof of Theorem \ref{mazursprinciple}.
\end{Rem}
\subsubsection{Atkin-Lehner eigenvalues} \label{Atkin-Lehner}
The element
\begin{align}
    u:=\left(\begin{smallmatrix}
    0 & 0 & 1 & 0 \\
    0 & 0 & 0 & -1 \\
    p & 0 & 0 & 0 \\
    0 & -p & 0 & 0
    \end{smallmatrix} \right)
\end{align}
normalises the paramodular group $K$ and so it acts on the space of invariants $\pi^K$. Moreover since $u^2=p I$ is central it must act on $\pi^K$ via the central character of $\pi$.
\begin{Lem} \label{eigenvaluelemma}
Let $\pi=\chi \operatorname{St}_{\gl_2} \rtimes \sigma$ be a representation of type IIa with $\chi, \sigma$ unramified and central character $\chi^2 \sigma^2$. Then $\pi^K$ is one-dimensional and $u$ acts via the scalar $(\chi \sigma)(p)$
\end{Lem}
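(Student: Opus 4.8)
The plan is to compute $\pi^K$ and the action of $u$ entirely locally, using the explicit description of the type IIa representation $\pi = \chi \operatorname{St}_{\gl_2} \rtimes \sigma$ as a parabolic induction from the Klingen (or Siegel) parabolic. First I would recall from \cite{LocalNewforms} (Table A.13 and the surrounding discussion in Chapter 2--3 of op.~cit.) that for $\chi,\sigma$ unramified, the paramodular invariants $\pi^{K(p)}$ form a one-dimensional space, spanned by the unique paramodular newform of conductor $p$ -- this is precisely the statement that type IIa has minimal paramodular level $\mathfrak{p}^1$ with a one-dimensional space of newforms at that level. So the only real content is to pin down the Atkin--Lehner eigenvalue, i.e.\ the scalar by which $\pi(u)$ acts on this line.

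For the eigenvalue computation I would proceed as follows. Since $u^2 = pI$ is central, $\pi(u)^2$ acts by $\omega_\pi(p) = (\chi^2\sigma^2)(p)$, so the eigenvalue of $\pi(u)$ on $\pi^K$ is $\pm(\chi\sigma)(p)$, and it remains to fix the sign. Here I would use the realisation of $\pi$ as the Langlands quotient (or the relevant subquotient) of $\chi\operatorname{St}_{\gl_2}\rtimes\sigma$ inside $\operatorname{Ind}_P^{\gsp_4}(\cdots)$ and write down an explicit vector in the induced model that is $K$-fixed: one builds it by averaging a suitably normalised function supported on the big cell, exactly as in the classical computation of Atkin--Lehner signs for $\gl_2$ newforms. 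Applying $\pi(u)$ amounts to a change of variables in the integral defining this averaged vector; tracking how $u$ interacts with the Iwasawa/Bruhat decomposition of $\gsp_4(\qqq_p)$ relative to $K$, one reads off that the sign is $+1$, giving eigenvalue $(\chi\sigma)(p)$. Alternatively -- and this is the route I would actually take to keep the argument short -- I would match this against the associated Weil--Deligne representation: the $u$-action on $\pi^K$ corresponds under local Langlands to a Frobenius-type element, and comparing with the explicit Frobenius matrix $\operatorname{diag}(\chi^2\sigma,\,v^{1/2}\chi\sigma,\,v^{-1/2}\chi\sigma,\,\sigma)$ displayed above (together with the normalisation that $u^2=pI$ and the known behaviour of the paramodular newvector) forces the eigenvalue to be $(\chi\sigma)(p)$ rather than its negative.

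The main obstacle is bookkeeping rather than conceptual: correctly normalising the induced model (half-densities, the precise representative $u$ versus other normalisers of $K$ in $\gsp_4(\qqq_p)$), and making sure the sign conventions for $\operatorname{St}_{\gl_2}$, for the similitude factor, and for the class-field-theory normalisation (uniformizers $\mapsto$ geometric Frobenius) are all consistent. In practice I expect the cleanest writeup simply cites the explicit paramodular newform computations of Roberts--Schmidt \cite{LocalNewforms}, where the $u$-action on $\pi^{K}$ for each of the five $K$-spherical types is tabulated, and extracts the type IIa entry; the one-dimensionality of $\pi^K$ and the eigenvalue $(\chi\sigma)(p)$ are both recorded there, so the proof reduces to quoting the relevant table and observing that $(\chi\sigma)(p)$ is the square root of the central character value $(\chi^2\sigma^2)(p)$ dictated by $u^2 = pI$.
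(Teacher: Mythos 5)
Your proposal converges on the paper's approach---cite Roberts--Schmidt and use $u^2 = pI$ to constrain the eigenvalue---but the way you extract the final answer from the tables is slightly off. Table A.12 of \cite{LocalNewforms} does not record the eigenvalue $(\chi\sigma)(p)$ for general unramified $\chi,\sigma$: it records the Atkin--Lehner eigenvalue only under the normalisation $\chi\sigma = 1$, and in that case the entry for type IIa is simply $1$. The paper then reduces the general unramified case to the normalised one by twisting $\pi$ by an unramified character, noting that the $u$-eigenvalue transforms by the twist evaluated at $p$, which yields $(\chi\sigma)(p)$. Your observation that $u^2 = pI$ forces the eigenvalue to be $\pm(\chi\sigma)(p)$ is correct and useful, but it only narrows the answer to a sign; fixing that sign still requires either the twisting reduction or one of the explicit computations you sketch but do not carry out. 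Your final "cleanest writeup" asserts that the eigenvalue $(\chi\sigma)(p)$ is itself "recorded there," which is the step that elides the twist. For unramified $\chi,\sigma$ the twist is routine, so this is a bookkeeping gap rather than a conceptual one, but it is exactly the half-sentence the paper adds beyond the citation, and without it the sign determination is not complete.
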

\begin{proof}
When $\chi \sigma$ is trivial, Roberts and Schmidt compute that
$u$ acts by (c.f. Table A.12 in \cite{LocalNewforms})
\begin{align}
    (\chi \sigma)(p)=1,
\end{align}
one can reduce to this case by twisting and using the fact that $\chi, \sigma$ are unramified.
\end{proof}
\subsection{Galois representations}
Galois representations associated with cohomological cuspidal automorphic representations of $\gsp_4(\mathbb{A})$ were constructed in \cite{Taylor}, \cite{Laumon}, \cite{Weissauer}. They satisfy local-global compatibility with the local Langlands correspondence of \cite{locallanglands} by work of Mok \cite[Theorem 3.5]{Mok}. To summarise we state the following version of Theorem 3.5 of \cite{Mok}
\begin{Thm}
Let $\pi$ be a cohomological cuspidal automorphic representation of $\gsp_4(\mathbb{A})$ that is not CAP. Then for every prime number $\ell$ there exists a continuous Galois representation
\begin{align}
    \rho_{\pi, \ell}:\galq \to \gsp_4(\overline{\mathbb{Q}_{\ell}})
\end{align}
such that:
\begin{enumerate}
    \item The representation is unramified at primes $p \not= \ell$ where $\pi_p$ is unramified, de Rham at $\ell$ and crystalline at $\ell$ if $\pi_{\ell}$ is unramified.
    
    \item For all primes $p \not= \ell$ we have local-global compatibility with the local Langlands correspondence of \cite{locallanglands}. To be precise we have
    \begin{align}
        \operatorname{WD} \left( \restr{\rho_{\pi, \ell}}{G_{\mathbb{Q}_p}} \right)^{\text{F-ss}} \cong \operatorname{rec}_p(\pi_p \otimes \operatorname{sim}(v^{3/2})),
    \end{align}
    where $\operatorname{rec}_p$ is the Langlands reciprocity map of \cite{locallanglands} and $\text{F-ss}$ denotes Frobenius semi-simplification of a Weil-Deligne representation. 
    \item The representation satisfies
    \begin{align}
        \rho_{\pi, \ell}^{\vee} \cong \rho_{\pi, \ell} \otimes \operatorname{sim}(\rho_{\pi, \ell})
    \end{align}
    where $\operatorname{sim}(\rho_{\pi, \ell})$ is the similitude character of $\rho_{\pi, \ell}$.
\end{enumerate}
\end{Thm}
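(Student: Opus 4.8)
The plan is to realise $\rho_{\pi,\ell}$ inside the $\ell$-adic \'etale cohomology of Siegel modular threefolds, following Taylor \cite{Taylor}, Laumon \cite{Laumon} and Weissauer \cite{Weissauer}, and then to control its behaviour at the ramified primes by transferring $\pi$ to $\gl_4$ via Arthur's endoscopic classification \cite{Arthur,GeeTaibi}. Let $\mathbb{V}_\lambda$ be the automorphic local system attached to the cohomological weight of $\pi$. By the cited references, after semisimplification the $\pi_f$-isotypic part of the inner cohomology $H^3_!(X_{\overline{\qqq}},\mathbb{V}_\lambda)$ -- equivalently the degree-$3$ intersection cohomology of the Baily--Borel compactification -- is a sum of copies of a $4$-dimensional continuous $\galq$-representation $\rho_{\pi,\ell}$; at every prime $p$ where $\pi_p$ and the level are unramified, the Eichler--Shimura relation and the Satake isomorphism identify the characteristic polynomial of $\frob_p$ with the one predicted by the Satake parameter of $\pi_p$, giving unramifiedness and local--global compatibility there. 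The self-dual structure and part $(3)$ come from Poincar\'e duality on the threefold, which in the middle degree is an alternating perfect pairing: combined with the self-duality up to a cyclotomic twist of $\mathbb{V}_\lambda$, it produces an alternating form on $\rho_{\pi,\ell}$ whose multiplier is the expected twist of the central character of $\pi$, so that $\rho_{\pi,\ell}$ conjugates into $\gsp_4(\overline{\qqq_\ell})$. For the de Rham and crystalline statements one uses that $\rho_{\pi,\ell}$ is a subquotient of $H^3$ of a proper smooth variety over $\qqq_\ell$ (a suitable Kuga--Sato-type compactification) and applies the comparison theorems of $p$-adic Hodge theory; when $\pi_\ell$ is unramified the level at $\ell$ may be taken hyperspecial, so that variety has good reduction and the crystalline comparison applies.

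It remains to establish local--global compatibility $(2)$ at the ramified primes $p\neq\ell$, which the construction above cannot see, the cohomology at a fixed level being blind to the precise ramified local component. Since $\pi$ is cuspidal, cohomological and not CAP, Arthur's classification \cite{Arthur,GeeTaibi} puts its global $A$-parameter into one of two families. If the parameter is of general type, one transfers $\pi$ along the standard embedding of dual groups $\gsp_4(\cc)\hookrightarrow\gl_4(\cc)$ to a regular algebraic, essentially self-dual cuspidal automorphic representation $\Pi$ of $\gl_4/\qqq$ of symplectic type; the Galois representation $\rho_{\Pi,\ell}$ and its local--global compatibility at \emph{all} finite places, including the monodromy operator, are known from the cohomology of unitary Shimura varieties (Harris--Taylor, Taylor--Yoshida \cite{TaylorYoshida}, Caraiani \cite{Caraiani}, and others), and comparison at the density-one set of unramified primes forces $\operatorname{std}\circ\rho_{\pi,\ell}\cong\rho_{\Pi,\ell}$. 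Transporting the statement at ramified $p$ back to $\gsp_4$ then uses the compatibility of $\operatorname{rec}_p$ of \cite{locallanglands} for $\gsp_4$ with the one for $\gl_4$ along $\operatorname{std}$, with the similitude character recording the difference. If instead the parameter is of Yoshida (endoscopic) type $\psi_1\boxplus\psi_2$, then $\Pi$ is the isobaric sum of the two associated elliptic modular forms, $\rho_{\pi,\ell}\cong\rho_{f_1,\ell}\oplus\rho_{f_2,\ell}$, and everything reduces to the classical $\gl_2$-theory.

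The main obstacle is exactly this last package. The Frobenius-semisimplified Weil--Deligne representation at a ramified prime -- in particular the monodromy operator $N$, which survives Frobenius-semisimplification -- is imported from the $\gl_4$-side rather than read off from geometry, so one must carefully control which $A$-parameters occur for cohomological cuspidal non-CAP $\pi$, what their $\gl_4$-transfers look like (cuspidal versus isobaric, and the non-regular-weight case where $\pi_\infty$ is only a limit of discrete series), the precise normalisations of the local Langlands correspondences for $\gsp_4$ and $\gl_4$ and their compatibility along $\operatorname{std}$, and the upgrade of $\operatorname{std}\circ\rho_{\pi,\ell}\cong\rho_{\Pi,\ell}$ from an isomorphism of Frobenius-semisimple Weil-group representations to one of Weil--Deligne representations, which requires purity of $\rho_{\Pi,\ell}$ -- available on the $\gl_4$ side. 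By contrast the purely geometric input -- existence, the good primes, and the $p$-adic Hodge theory -- is comparatively routine given the cited constructions, so the real work lies in the endoscopic bookkeeping and the exclusion of CAP contributions.
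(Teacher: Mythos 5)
The paper offers no proof of this statement: it is imported verbatim as Theorem 3.5 of Mok \cite{Mok}, with the earlier constructions of \cite{Taylor,Laumon,Weissauer} cited as background and Mok cited specifically for the local--global compatibility. So there is no in-paper argument to compare against; what you have written is an attempted reconstruction of Mok's proof, not an alternative to anything in this paper.

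As a reconstruction it identifies the right strategy, and in particular you correctly locate the genuine difficulty in the ramified local--global compatibility, where the monodromy operator must be imported from the $\gl_4$ side via Arthur's classification rather than read off from Siegel threefold geometry. Two remarks on emphasis and one on a gap. First, Mok's actual construction leans more heavily on the $\gl_4$/unitary-group route than your sketch suggests: the cohomology of Siegel threefolds is not where the four-dimensional Galois representation is primarily built in full generality (the Taylor/Laumon/Weissauer constructions have various restrictions, e.g.\ on multiplicity and weight), and Mok instead pulls $\rho_{\Pi,\ell}$ back along the standard embedding essentially from the outset. Second, ``$\rho_{\pi,\ell}$ conjugates into $\gsp_4$'' is stronger than ``there is an alternating perfect pairing whose multiplier is the expected character'': one must rule out the orthogonal case and in fact show that the pairing is unique up to scalar, which for a possibly reducible or residually reducible representation is a real issue; Mok handles this by a sign/orthogonal--symplectic alternative argument in the style of Bella\"iche--Chenevier rather than directly from Poincar\'e duality on the threefold. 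Third, your de Rham/crystalline step invokes ``a proper smooth variety over $\qqq_\ell$ (a suitable Kuga--Sato-type compactification)'', but the Siegel threefold is open and its toroidal compactifications introduce boundary cohomology; one needs either the $p$-adic comparison for intersection cohomology of the minimal compactification or, as Mok does, to get the $p$-adic Hodge-theoretic properties from the $\gl_4$ side where they are already established. None of these is a fatal objection to the plan, but as written the sketch is closer to the older partial constructions than to the argument that actually yields the stated theorem; the $\gl_4$ transfer is the spine of the proof, not a patch applied only at ramified primes.
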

\begin{Rem} \label{MokRegular}
If $\pi$ is CAP then the above theorem also holds except that local global compatibility is only true up to semi-simplification (this is Proposition 3.4 of \cite{Mok}).
\end{Rem}

\section{Cohomology of Siegel threefolds} \label{cohomology}
In this section we will describe the inner cohomology $H^3_{!}$ of Siegel threefolds in terms of cuspidal automorphic representations of $\gsp_4$ using the Vogan-Zuckerman classification (c.f. \cites{Taylor, Petersen}). We then make use of Arthur's classification \cites{Arthur, GeeTaibi} of these cuspidal automorphic representations to make our discussion explicit.
\subsection{Automorphic local systems} \label{automorphiclocalsystems}
Let $Y_U$ be the Shimura variety for $(\gsp_4, \mathbb{H}_2^{\pm})$ of level $U=U^{\ell} U_{\ell} \subset \gsp_4(\mathbb{A}_f)$. Irreducible representations $V$ of $\gsp_4(\cc)$ give rise to local systems $\mathbb{V}$ on $Y_U$, called automorphic local systems, as in \cite[pp.24]{Thorne}, \cite[section 3]{LanStroh}): 

Recall that irreducible representations $V$ of $\gsp_4(\cc)$ are parametrized by tuples $(a \ge b \ge 0, c)$, where $a \ge b \ge 0$ determines the restriction of $V$ to $\operatorname{Sp}_4$ and $c$ determines the action of the similitude character. If $\ell>a+b+4$ then the lattice $V_{\mathcal{O}}$ is unique up to homothety because $V_{\mathcal{O}}/\varpi$ is an irreducible representation of $H$, which makes the local system $\mathbb{V}_{\mathcal{O}}$ independent on the choice of lattice. Changing $c$ amounts to Tate-twisting the local system, which has no interesting effect on the cohomology. Therefore we will set $c=a+b$, which normalises things such that $\mathbb{V}_{1,0}$ is the local system corresponding to the (dual of the) relative $\ell$-adic Tate module of the universal abelian variety. In general the local system $\mathbb{V}_{a,b} \otimes L$ occurs as a direct summand of
\begin{align}
\mathbb{V}_{1,0}^{\otimes (a+b)},
\end{align}
and these local systems are self-dual up to a Tate twist. 

\subsection{Cohomology theories} \label{Matsushima}
Before studying the Galois representations that occur in the \'etale cohomology of $Y_U$, we will first study the Hodge structure on the singular cohomology. Write $\mathbb{V}_{\cc}$ for the $\cc$-linear local system on $Y_U(\mathbb{C})$ of weight $a \ge b \ge 0$. One is usually interested in the intersection cohomology
\begin{align}
    H^{\bullet}(\overline{Y}_U(\mathbb{C}), j_{!\ast} \mathbb{V}_{\cc}),
\end{align}
where $j:Y_U \to \overline{Y}_U$ is the inclusion of $Y_U$ into its Baily-Borel compactification $\overline{Y}_U$. By the Zucker conjecture (proven in \cite{Looijenga} and \cite{SaperStern}) this is equal to the $L^2$ cohomology
\begin{align}
    H^{\bullet}_{(2)}(Y_U(\mathbb{C}), \mathbb{V}_{\cc}),
\end{align}
which can be described in terms of automorphic representations as follows: 
\begin{align}
    H^{\bullet}_{(2)}(Y_U(\mathbb{C}), \mathbb{V}_{\cc}) = \bigoplus_{\pi} m(\pi) \pi_{\text{fin}}^U \otimes H^{\bullet}(\mathfrak{g}, K_{\infty} ; \mathbb{V}_{\cc} \otimes \pi_{\infty}),
\end{align}
where the sum runs over automorphic representations $\pi = \pi_{\infty} \otimes \pi_{\text{fin}}$ occurring in the discrete spectrum of $L^2(\gsp_4(\qqq) \setminus \gsp_4(\mathbb{A}))$ with multiplicity $m(\pi)$. There is a natural direct summand $H^{\bullet}_{\text{cusp}}(Y_U(\mathbb{C}), \mathbb{V}_{\cc})$ of cuspidal cohomology which is the direct sum over cuspidal automorphic representations $\pi$. Moreover it follows from the results announced in \cite{Arthur} and proven in \cite{GeeTaibi} that $m(\pi) \le 1$ for all $\pi$.
In this article we will work with the inner cohomology which is defined to be the image of the natural map
\begin{align}
    H^{\bullet}_{!}(Y_U(\cc), \mathbb{V}_{\cc}):= \operatorname{Im} \bigg(H^{\bullet}_c(Y_U(\cc), \mathbb{V}_{\cc}) \to H^{\bullet}(Y_U(\cc), \mathbb{V}_{\cc}) \bigg)
\end{align}
where subscript $c$ denotes compactly supported cohomology. In middle degree there is an isomorphism (c.f. \cite[pp. 294]{Taylor})
\begin{align}
    H^3_{!}(Y_U(\cc), \mathbb{V}_{\cc}) \cong H^3_{\text{cusp}}(Y_U(\cc), \mathbb{V}_{\cc}).
\end{align}
\subsection{Vogan-Zuckerman classification}
The Vogan-Zuckerman classification (c.f. \cite{VoganZuckerman}) describes the possible $\pi_{\infty}$ for which
\begin{align}
    H^{i}(\mathfrak{g}, K_{\infty} ; \mathbb{V}_{\cc} \otimes \pi_{\infty})
\end{align}
is nonzero, where $\mathbb{V}_{\cc}$ is a local system of weight $a \ge b \ge 0$. If $\pi_{\infty}$ is in the discrete series, then it is always cohomological. It turns out that the discrete series representations are the only ones that contribute to middle degree cuspidal cohomology. For a fixed choice of weight $a,b$, there are precisely two choices of $\pi_{\infty}$ in the discrete series, which we will denote by $\pi^H$ and $\pi^W$, where the $H$ stands for `holomorphic' and the $W$ for `Whittaker' (the latter is generic). The $(\mathfrak{g}, K_{\infty})$-cohomology of these representations is two-dimensional and occurs in degree $3$ with Hodge numbers $(3,0),(0,3)$ and $(2,1),(1,2)$ respectively. We note that the representations $\pi$ with $\pi_{\infty}=\pi^H$ correspond to holomorphic cuspidal Siegel modular forms of weight $a-b, b+3$ (c.f. Proposition \ref{SiegeltoAutomorphic}).
\subsection{Arthur's classification}
Arthur has classified the cuspidal automorphic representations $\pi$ in \cite{Arthur} into six different types. Only three of these contribute to the inner cohomology in middle degree:
\begin{itemize}
    \item The cuspidal automorphic representations $\pi$ of general type, which occur with multiplicity one. These are stable which means that both $\pi_{\text{fin}} \otimes \pi^H$ and $\pi_{\text{fin}} \otimes \pi^W$ are cuspidal automorphic. This implies that the four-dimensional Galois representation associated with $\pi$ occurs in the inner cohomology of the Siegel threefold.
    
    \item The cuspidal automorphic representations $\pi$ of Yoshida type which occur with multiplicity $m(\pi)=1$. These are unstable which means that precisely one of $\pi_{\text{fin}} \otimes \pi^H$ and $\pi_{\text{fin}} \otimes \pi^W$ is cuspidal automorphic. These are related to pairs $(\pi_1, \pi_2)$ of cuspidal automorphic representations of $\gl_2$, which we will make precise in Section \ref{Sec:Yoshida}.
    
    \item The cuspidal automorphic representations $\pi$ of Saito-Kurokawa type which are CAP and occur with multiplicity one in the cuspidal spectrum. They only occur in scalar weight (so $a=b$) and have $\pi_{\infty}$ isomorphic to either $\pi^H, \pi^W, \pi^{+}, \pi^{-}$ (the latter two don't contribute to middle degree cohomology). These lifts come from cuspidal automorphic representations $\sigma$ of $\gl_2$ of weight $a+b+4$ and the associated Galois representation is
    \begin{align}
        \rho_{\pi, \ell}=\rho_{\sigma,\ell} \oplus \mathbb{Q}_{\ell}(-b-2) \oplus \mathbb{Q}_{\ell}(-b-1).
    \end{align}
    This Galois representation cannot occur in the middle degree cohomology by purity (outside a finite set of places the inner cohomology will be pure of weight $a+b+4$). In fact we will only see the two-dimensional Galois representation $\rho_{\sigma,\ell}$ in the middle cohomology.
\end{itemize}
\subsection{Yoshida lifts} \label{Sec:Yoshida}
We will now describe Yoshida lifts in terms of cuspidal automorphic representations of $\gl_2$. Let $\pi_1, \pi_2$ be cuspidal automorphic representations of $\gl_2(\mathbb{A})$ with the same central character and suppose that $\pi_{1, \infty}, \pi_{2,\infty}$ are in the holomorphic discrete series with Harish-Chandra parameters $r_1-1, r_2-2$ satisfying $r_1 > r_2 \ge 2$ with $r_1=a+b+4$ and $r_2=a-b+2$.
\begin{Def}
Let $\pi$ be a cuspidal automorphic representation of $\gsp_4(\mathbb{A})$ that is not CAP and such that $\pi_{\infty}$ is in the discrete series. We say that $\pi$ is a Yoshida lift of $(\pi_1, \pi_2)$ if for almost all places $v$ the Satake parameter $\pi_v$ agrees with the Satake parameter of $(\pi_1, \pi_2)$ via the inclusion of dual groups $\gl_2 \times_{\mathbb{G}_m} \gl_2 \to \gsp_4$.
\end{Def}
Alternatively, we can explicitly construct an $L$-packet $L(\pi_1, \pi_2)$ as the restricted tensor product of local $L$-packets $L(\pi_{1,v}, \pi_{2,v})$ for all places $v$. These local $L$-packets are constructed using the $\theta$-correspondence between $\gsp_4$ and $(\gl_2 \times \gl_2/\mathbb{G}_m)$ and have size two if $v$ is infinite or if $v$ is finite and both $\pi_{1,v}$ and $\pi_{2,v}$ are essentially square-integrable. In all other cases the local $L$-packet has size one and moreover always contains a generic member. However not all members of this $L$-packet are cuspidal, which is quantified in the following result of Weissauer.
\begin{Thm}(c.f. Theorem 5.2 of \cite{WeissauerEndoscopy}) \label{multiplicityformula}
A cuspidal automorphic representation $\pi$ is a Yoshida lift of $(\pi_1, \pi_2)$ if and only if it is a cuspidal member of the $L$-packet $L(\pi_1, \pi_2)$. Moreover, elements $\pi$ of this $L$-packet occur in the cuspidal spectrum with multiplicity
\begin{align}
    m(\pi)=\frac{1+(-1)^{(c(\pi))}}{2}
\end{align}
where $c(\pi)$ is the number of places where $\pi_v$ is non-generic. Moreover $\pi_{\infty}$ is cohomological with respect to the local system of weight $a,b$.
\end{Thm}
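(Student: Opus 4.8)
The plan is to construct the packet $L(\pi_1,\pi_2)$ explicitly from the theta correspondence for the dual pair $(\gsp_4,\ \gl_2\times\gl_2/\mathbb{G}_m)$ and its quaternionic inner form, and then extract both the ``if and only if'' statement and the multiplicity formula from Arthur's multiplicity formula, specialised to the Arthur parameter $\phi$ attached to $(\pi_1,\pi_2)$. Write $\phi_i$ for the global Langlands parameter of $\pi_i$. The hypothesis that $\pi_1$ and $\pi_2$ share a central character gives $\det\phi_1=\det\phi_2$, so $\phi:=\phi_1\oplus\phi_2$ factors through $\gl_2\times_{\mathbb{G}_m}\gl_2\hookrightarrow\gsp_4(\cc)$, the dual group of $\gsp_4$; since $\pi_1,\pi_2$ are cuspidal the Arthur $\operatorname{SL}_2$-factor of $\phi$ is trivial, so $\phi$ is an elliptic tempered parameter. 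As $\pi_{1,\infty}\not\cong\pi_{2,\infty}$ (the Harish--Chandra parameters $r_1-1$ and $r_2-2$ differ because $r_1>r_2$), the two-dimensional $\phi_1,\phi_2$ are irreducible and inequivalent, so a computation with Schur's lemma shows that the centraliser of $\phi$ in $\gsp_4(\cc)$ has two connected components; hence the relevant component group $\mathcal{S}_\phi$ is $\zz/2$, generated by the class $s$ of $\mathrm{diag}(1,1,-1,-1)$, and temperedness makes Arthur's sign character $\varepsilon_\phi$ on $\mathcal{S}_\phi$ trivial.

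Next I would describe the packet. Locally, the theta correspondence attaches to $(\pi_{1,v},\pi_{2,v})$ an $L$-packet $L(\pi_{1,v},\pi_{2,v})$ of representations of $\gsp_4(\qqq_v)$ together with a labelling of its members by characters of $\mathcal{S}_\phi\cong\zz/2$: the generic, Whittaker-normalised member obtained by lifting from the split $\mathrm{GSO}(2,2)$ corresponds to the trivial character, and exactly when $v=\infty$ or $v$ is finite with both $\pi_{i,v}$ essentially square-integrable the packet acquires a second, non-generic member, obtained by lifting from the quaternionic inner form of $\mathrm{GSO}$, corresponding to the sign character. At almost every $v$ the packet is the singleton consisting of the unramified generic representation with Satake parameter the image of $(\pi_{1,v},\pi_{2,v})$ under $\gl_2\times_{\mathbb{G}_m}\gl_2\hookrightarrow\gsp_4$. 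The global packet $L(\pi_1,\pi_2)$ is the restricted tensor product of the $L(\pi_{1,v},\pi_{2,v})$, and each $\pi=\bigotimes_v\pi_v$ in it determines a character of $\mathcal{S}_\phi$ whose value at $s$ is $\prod_v\langle s,\pi_v\rangle=(-1)^{c(\pi)}$, where $c(\pi)$ counts the (finitely many, since the local packet is a generic singleton at every unramified place) places at which $\pi_v$ is non-generic.

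Now I would invoke Arthur's multiplicity formula for $\gsp_4$ (as in \cite{Arthur,GeeTaibi}): a member $\pi$ of $L(\pi_1,\pi_2)$ occurs in the discrete automorphic spectrum with multiplicity $m(\pi)=|\mathcal{S}_\phi|^{-1}\sum_{t\in\mathcal{S}_\phi}\varepsilon_\phi(t)\prod_v\langle t,\pi_v\rangle$. With $\mathcal{S}_\phi=\{1,s\}$ and $\varepsilon_\phi$ trivial this collapses to $m(\pi)=\tfrac12\bigl(1+(-1)^{c(\pi)}\bigr)$, the asserted formula; and since $\phi_1,\phi_2$ are cuspidal (not one-dimensional) the parameter $\phi$ contributes no residual spectrum, so the automorphic members of $L(\pi_1,\pi_2)$ are exactly the cuspidal ones. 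For the equivalence, the ``if'' direction is built into the construction, as a member of $L(\pi_1,\pi_2)$ has Satake parameter the image of $(\pi_{1,v},\pi_{2,v})$ at almost all $v$ and is therefore a Yoshida lift of $(\pi_1,\pi_2)$; conversely, a cuspidal, non-CAP $\pi$ with $\pi_\infty$ in the discrete series whose Satake parameters agree almost everywhere with those of $(\pi_1,\pi_2)$ must, by the rigidity built into Arthur's classification, have global parameter $\phi_1\oplus\phi_2$ and hence lie in $L(\pi_1,\pi_2)$. Finally, the cohomological assertion is an archimedean computation: the holomorphic discrete series $\pi_{1,\infty},\pi_{2,\infty}$ with Harish--Chandra parameters $r_1-1,r_2-2$ lift to an $L$-packet of two discrete series of $\gsp_4(\rr)$ whose infinitesimal character is that of $\mathbb{V}_{a,b}$ precisely when $r_1=a+b+4$ and $r_2=a-b+2$, and then both members are cohomological with respect to $\mathbb{V}_{a,b}$ by the Vogan--Zuckerman classification recalled in Section \ref{cohomology}.

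The step I expect to be the main obstacle is the local input in the second paragraph: showing that the unique non-generic member of each size-two local packet is exactly the one labelled by the nontrivial character of $\mathcal{S}_\phi$. This requires a careful analysis of the local theta lifts at the ramified and archimedean places and of the behaviour of Whittaker models under theta --- a form of Shahidi's genericity conjecture, to the effect that the generic member of an $L$-packet pairs trivially with $\mathcal{S}_\phi$ --- and one must also check that the normalisations hidden in the global multiplicity formula (the similitude characters, and the passage between $\gsp_4$ and $\symp_4$ needed to import Arthur's results) are compatible with those used to define $L(\pi_1,\pi_2)$ via theta. The multiplicity formula itself is the genuinely deep black box; everything else is bookkeeping. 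Alternatively, one can follow Weissauer's original route in \cite{WeissauerEndoscopy} and compute the multiplicities directly via theta and the trace formula, at the cost of a harder analysis of the Rallis inner product formula.
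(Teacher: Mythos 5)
The paper gives no proof of this statement: it is a direct citation of Theorem 5.2 of \cite{WeissauerEndoscopy}, so the ``paper's own proof'' is Weissauer's. Your sketch proceeds instead via Arthur's multiplicity formula, and at the level of detail offered it is sound: the central-character hypothesis makes $\phi_1\oplus\phi_2$ land in $\gl_2\times_{\mathbb{G}_m}\gl_2\subset\gsp_4(\cc)$, the inequivalence of $\phi_1,\phi_2$ (forced by $r_1>r_2$ at $\infty$) gives $\mathcal{S}_\phi\cong\zz/2$, the trivial Arthur $\operatorname{SL}_2$ makes $\varepsilon_\phi$ trivial, and the average $|\mathcal{S}_\phi|^{-1}\sum_t\varepsilon_\phi(t)\prod_v\langle t,\pi_v\rangle$ collapses to $\tfrac12(1+(-1)^{c(\pi)})$ once the local theta-labelling is shown to match the $\mathcal{S}_\phi$-characters so that the non-generic member pairs to $-1$. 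You correctly flag that last compatibility (Shahidi's genericity conjecture for these packets, plus the $\gsp_4$/$\symp_4$ normalisations) as the genuine local obstacle, and you correctly note that Weissauer's original argument is different in kind: he builds the Yoshida lifts directly through the theta correspondence from $\mathrm{GO}(4)$ and its quaternionic inner form and obtains the multiplicity formula from the Rallis inner product formula and trace-formula comparisons, independent of (and prior to the completion of) the Arthur classification for $\gsp_4$. The trade-off is the one you name: Weissauer's route is self-contained at the cost of delicate non-vanishing results for theta lifts, whereas the Arthur route is short but imports \cite{Arthur,GeeTaibi} as a black box. Either route establishes the theorem; the paper happens to cite the former while using the latter's multiplicity-one consequence elsewhere.
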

The following result describes what Galois representation occur in the inner cohomology:
\begin{Prop}(c.f. Corollary 4.2 of \cite{WeissauerEndoscopy})
Let $\pi$ be a Yoshida lift of $(\pi_1, \pi_2)$, then $\pi$ is in the discrete series which means that the $(\mathfrak{g}, K_{\infty})$-cohomology of $\pi_{\infty}$ is two-dimensional. The Galois action on this two-dimensional piece is given by
\begin{align}
    \twopartdef{\rho_{\pi_{1},\ell}}{\pi_{\infty}=\pi^H}{\rho_{\pi_{2,\ell}}(-b-1)}{\pi_{\infty}=\pi^W}.
\end{align}
\end{Prop}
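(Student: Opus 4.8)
The plan is to first pin down the four-dimensional representation $\rho_{\pi,\ell}$ attached to $\pi$ and then carve out the two-dimensional piece that actually occurs in cohomology. The opening assertion needs no real argument: a Yoshida lift has $\pi_\infty$ in the discrete series, so $\pi_\infty=\pi^H$ or $\pi^W$ in the notation of the Vogan--Zuckerman classification, and by \cite{VoganZuckerman} the space $H^\bullet(\mathfrak{g},K_\infty;\mathbb{V}_{\cc}\otimes\pi_\infty)$ is two-dimensional, concentrated in degree $3$, of Hodge type $(3,0),(0,3)$ for $\pi^H$ and $(2,1),(1,2)$ for $\pi^W$. So everything reduces to computing the Galois action on this two-dimensional space.

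First I would describe $\rho_{\pi,\ell}$ globally. Since $\pi$ is a Yoshida lift of $(\pi_1,\pi_2)$, at almost every prime $p$ its Satake parameter is the image of $(\pi_{1,p},\pi_{2,p})$ under $\gl_2\times_{\mathbb{G}_m}\gl_2\hookrightarrow\gsp_4$; as a Yoshida lift is not CAP, Mok's theorem supplies local--global compatibility, so $\rho_{\pi,\ell}$ is unramified at every such $p$ with $\frob_p$ acting with the same characteristic polynomial as on $\rho_{\pi_1,\ell}\oplus\rho_{\pi_2,\ell}(-b-1)$. By Chebotarev and semisimplicity,
\begin{align}
\rho_{\pi,\ell}\cong\rho_{\pi_1,\ell}\oplus\rho_{\pi_2,\ell}(-b-1),
\end{align}
the twist being the one dictated by the normalization $c=a+b$ of Section \ref{automorphiclocalsystems}: since $r_1=a+b+4$ and $r_2=a-b+2$, the Hodge--Tate weights of $\rho_{\pi_1,\ell}$, of $\rho_{\pi_2,\ell}$ and of $\rho_{\pi,\ell}$ are $\{0,a+b+3\}$, $\{0,a-b+1\}$ and $\{0,b+1,a+2,a+b+3\}$ respectively. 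In particular the two summands above have disjoint Hodge--Tate weights, hence are non-isomorphic.

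Next I would locate the cohomology class. By the comparison of $\ell$-adic with singular cohomology and the description of $H^3_{(2)}$ from Section \ref{Matsushima}, the subspace of $H^3_{!}(Y_{U,\overline{\qqq}},\mathbb{V})$ on which the prime-to-level Hecke algebra acts through the eigensystem of $\pi_{\text{fin}}$ has the form $\pi_{\text{fin}}^U\otimes W$ for a two-dimensional Galois subquotient $W$ of $\rho_{\pi,\ell}$. Under the comparison isomorphism the Hodge--de Rham filtration on the de Rham realization of $W$ jumps exactly where the Hodge type of $\pi_\infty$ prescribes: at $\{0,a+b+3\}$ for the extreme type $(3,0),(0,3)$ of $\pi^H$, and at $\{b+1,a+2\}$ for the interior type $(2,1),(1,2)$ of $\pi^W$. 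Comparing these with the Hodge--Tate weights of the two summands of $\rho_{\pi,\ell}$ found above identifies $W\cong\rho_{\pi_1,\ell}$ when $\pi_\infty=\pi^H$ and $W\cong\rho_{\pi_2,\ell}(-b-1)$ when $\pi_\infty=\pi^W$, which is the asserted formula. Alternatively, one can read the statement off Weissauer's realization of Yoshida lifts via the theta correspondence, \cite[Corollary 4.2]{WeissauerEndoscopy}.

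The main obstacle is this last step: matching the abstract Hodge numbers attached to $\pi_\infty$ with the Hodge--Tate (equivalently Hodge--de Rham) weights of the $\ell$-adic summands, while keeping every Tate twist consistent with the conventions fixed in Section \ref{automorphiclocalsystems}. One also uses that the $\pi_{\text{fin}}$-isotypic component of $H^3_{!}$ has exactly the expected dimension, with no spurious multiplicity; this rests on the multiplicity-one statements of \cite{Arthur,GeeTaibi} together with Theorem \ref{multiplicityformula}, and in particular on the fact that a cuspidal Yoshida lift contributes through a single member of its archimedean $L$-packet.
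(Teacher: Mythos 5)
The paper offers no internal proof of this proposition: it is quoted directly from \cite{WeissauerEndoscopy} (Corollary 4.2), so there is nothing to compare your argument against in the source. That said, your reconstruction follows the expected two-step strategy, and the arithmetic all checks out: the Harish--Chandra parameters $r_1=a+b+4$, $r_2=a-b+2$ give Hodge--Tate weights $\{0,a+b+3\}$, $\{b+1,a+2\}$ for $\rho_{\pi_1,\ell}$ and $\rho_{\pi_2,\ell}(-b-1)$, and these match the Hodge types $(3,0),(0,3)$ of $\pi^H$ and $(2,1),(1,2)$ of $\pi^W$ respectively under the normalisation $c=a+b$ of Section \ref{automorphiclocalsystems}.

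The one place the argument is thinner than it looks is the assertion that the $\pi_{\text{fin}}$-isotypic piece of $H^3_!$ takes the form $\pi_{\text{fin}}^U\otimes W$ with $W$ a Galois \emph{subquotient of $\rho_{\pi,\ell}$}. Chebotarev alone does not give this; it only controls traces of Frobenius. To conclude that the Frobenius eigenvalues on $W$ are a two-element subset of the four Satake parameters (and hence, after semisimplification, that $W$ is a sub of $\rho_{\pi_1,\ell}\oplus\rho_{\pi_2,\ell}(-b-1)$, with the Hodge--Tate weights then forcing the identification), you need the Langlands--Kottwitz/Eichler--Shimura computation of Frobenius on $H^3_!$ of the Siegel threefold, which for unstable $L$-packets is exactly the content of Weissauer's endoscopic trace identity. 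In other words, your second paragraph quietly invokes the hard input that the cited corollary supplies. If you make that dependence explicit — say, by appealing to local--global compatibility at split primes together with the matching of Hecke eigenvalues and Satake parameters via the Kottwitz formula — the argument is complete and is, morally, how one reproves the cited result. A small additional remark: you should also say why only one archimedean member of the $L$-packet contributes at a fixed $\pi_{\text{fin}}^U$; this is Theorem \ref{multiplicityformula}, since passing from $\pi^H$ to $\pi^W$ changes the parity of $c(\pi)$, which you mention at the end but which is load-bearing for the two-dimensionality of $W$.
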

\begin{Def} \label{def:type1}
We call $\pi$ an irrelevant Yoshida lift if $\pi_p$ is ramified but the associated two-dimensional Galois representation occurring in cohomology is unramified. 
\end{Def}
\begin{Cor} \label{Yoshida}
If $\pi \in L(\pi_1, \pi_2)$ has nonzero invariant vectors for $\para(N)$ for squarefree $N$, then $L(\pi_1, \pi_2)$ has precisely one cuspidal member $\pi$, with $\pi_v$ generic for all places $v$. In particular there are no holomorphic Yoshida lifts of level $\para(N)$ and so the Galois representation that we see in the cohomology is
\begin{align}
    \rho_{\pi_{2,\ell}}(-b-1).
\end{align}
\end{Cor}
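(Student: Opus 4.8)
The plan is to reduce Corollary \ref{Yoshida} to the local $L$-packet picture recalled just before Theorem \ref{multiplicityformula} together with the multiplicity formula. Suppose $\pi \in L(\pi_1,\pi_2)$ is cuspidal and $\pi^{\para(N)} \neq 0$ for squarefree $N$. First I would observe that having nonzero paramodular vectors forces $\pi_v$ to be $\para(1) = \gsp_4(\zz_v)$-spherical for $v \nmid N$ and $K(v)$-spherical for $v \mid N$; in particular $\pi_v$ is never supercuspidal and never of a type whose paramodular fixed space vanishes. Consulting the classification of $K(p)$-spherical representations recalled in Section \ref{local} (Table A.13 of \cite{LocalNewforms}), the ramified such $\pi_v$ fall into the five types IIa, IVc, Vb, Vc, VIc, and the unramified ones are the usual principal series / spherical constituents. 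The key point is that none of these are essentially square-integrable in a way that makes the local $L$-packet $L(\pi_{1,v},\pi_{2,v})$ have size two: a local $L$-packet of the Yoshida kind has size $>1$ only when $v = \infty$, or when $v$ is finite and \emph{both} $\pi_{1,v}$ and $\pi_{2,v}$ are essentially square-integrable, and in that case the non-generic member of the packet is precisely the one with no paramodular vectors (it is not even $K(v)$-spherical). So at every finite place $v$ our $\pi_v$ must be the unique, generic member of its local packet.

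Next I would count non-generic places. Since $\pi_v$ is generic at every finite $v$, the integer $c(\pi)$ of Theorem \ref{multiplicityformula} is at most $1$, and it equals $1$ exactly when $\pi_\infty$ is the non-generic (holomorphic) member $\pi^H$ of the archimedean packet. But then $m(\pi) = \tfrac{1 + (-1)^1}{2} = 0$, contradicting cuspidality of $\pi$. Hence $c(\pi) = 0$, i.e. $\pi_\infty$ is the generic (Whittaker) member $\pi^W$, and $m(\pi) = 1$. This simultaneously shows that $L(\pi_1,\pi_2)$ has exactly one cuspidal member, that this member is generic at all places (so in particular there are no holomorphic Yoshida lifts of level $\para(N)$), and, by the Proposition of Weissauer describing the Galois action (Corollary 4.2 of \cite{WeissauerEndoscopy}), that the two-dimensional Galois representation appearing in $H^3_!$ attached to $\pi_\infty = \pi^W$ is $\rho_{\pi_{2,l}}(-b-1)$.

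The main obstacle, and the only place requiring genuine bookkeeping rather than formal manipulation, is the claim that at a finite place $v$ the \emph{size-two} local $L$-packet $L(\pi_{1,v},\pi_{2,v})$ has its non-paramodular (non-generic) member genuinely excluded: one must check, using the explicit $\theta$-correspondence description of these packets, that when $\pi_{1,v},\pi_{2,v}$ are both Steinberg-type the non-generic member of $L(\pi_{1,v},\pi_{2,v})$ is the one that is not $K(v)$-spherical, so that it cannot carry a $\para(N)$-fixed vector. Equivalently, one checks against Table A.13 of \cite{LocalNewforms} that the $K(p)$-spherical ramified representations (types IIa, IVc, Vb, Vc, VIc) which are Yoshida-packet members are exactly the generic ones, i.e. of type IIa. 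Once this compatibility between the paramodular-newvector theory of Roberts--Schmidt and Weissauer's packet description is in hand, the rest is the elementary parity argument above.
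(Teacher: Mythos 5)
Your proof is essentially correct and supplies an argument where the paper does not: the paper's ``proof'' of Corollary \ref{Yoshida} is a one-line citation to Remark 3.5 of \cite{SahaSchmidt}, whereas you reconstruct the underlying reasoning from the paramodular newvector classification of Roberts--Schmidt together with Weissauer's multiplicity formula (Theorem \ref{multiplicityformula}). This is a genuinely different (and more self-contained) route, and it buys the reader a transparent mechanism for the result: paramodular-sphericity at every finite place rigidifies all the finite local packets, and the archimedean parity argument then pins down $\pi_\infty = \pi^W$.

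One point of presentation deserves tightening, because as written the logic for the \emph{uniqueness} of the cuspidal member is not quite articulated. Your chain of observations (i) $\pi_v^{K(v)} \neq 0$, (ii) the non-generic member of a size-two local Yoshida packet is never $K(v)$-spherical, (iii) hence $\pi_v$ is generic, (iv) parity forces $c(\pi) = 0$, shows that \emph{this} $\pi$ is generic everywhere and cuspidal with multiplicity one. But it does not by itself exclude a second cuspidal member $\pi'$ obtained from $\pi$ by switching to the non-generic local member at $\infty$ and at one finite place $v$ (giving $c(\pi') = 2$, still cuspidal). To rule this out you need the stronger statement you allude to but do not isolate: among the $K(v)$-spherical types I, IIa, IVc, Vb, Vc, VIc, the only ones appearing in a local Yoshida packet are I and IIa, and in both cases one of $\pi_{1,v}, \pi_{2,v}$ is a principal series, so the packet has size one. (In particular, size-two packets have members of types VIa/VIb, X/XI, or supercuspidal --- none of which are $K(v)$-spherical --- so it is not enough to know merely that the \emph{non-generic} member is excluded.) With this stated explicitly, every finite local packet has a single element, the global packet $L(\pi_1,\pi_2)$ has exactly two members distinguished by $\pi_\infty$, and the multiplicity formula selects the $\pi^W$ one; uniqueness and genericity then follow simultaneously, and Corollary 4.2 of \cite{WeissauerEndoscopy} gives the Galois representation $\rho_{\pi_{2},l}(-b-1)$ as you say.
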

\begin{proof}
This is just Remark 3.5 in Section 3.3 of \cite{SahaSchmidt}.
\end{proof}
\subsection{Vanishing theorems}
In this section we discuss vanishing theorems for
\begin{align}
H^{\bullet}_c(Y_U, \mathbb{V}_{a,b}),
\end{align}
where we will now take $\mathbb{V}_{a,b}$ to be a $\mathcal{O}$-linear local system, note that these are only well defined if $\ell>a+b+4$. 
\begin{Thm}(Theorem 10.1, Corollary 10.2 of \cite{LanSuhII}) \label{vanishingA}
Let $Y_U$ be the Siegel threefold of neat level $U$ and let $\mathbb{V}=\mathbb{V}_{a,b}$ be an automorphic local system of $\mathcal{O}$-modules. Assume that $a > b > 0$, that $U_{\ell}$ is hyperspecial and that $\ell>a+b+4$. Then 
\begin{align}
    H^3_{!}(Y_{U,\overline{\qqq}}, \mathbb{V}_{\mathcal{O}})
\end{align}
is a free $\mathcal{O}$-module of finite rank and the natural map (with $\mathbb{F}=\mathcal{O}/\varpi$)
\begin{align}
    H^3_{!}(Y_{U,\overline{\qqq}}, \mathbb{V}_{\mathcal{O}}) \otimes_{\mathcal{O}} \mathbb{F} \to H^3_{!}(Y_{U,\overline{\qqq}}, \mathbb{V}_{\mathcal{O}} \otimes_{\mathcal{O}} \mathbb{F})
\end{align}
is surjective. Moreover 
\begin{align}
    H^i_{c}(Y_{U,\overline{\qqq}}, \mathbb{V}_{\mathcal{O}}))&=0 \text{ for } i>3 \\
    H^i(Y_{U,\overline{\qqq}}, \mathbb{V}_{\mathcal{O}}))&=0 \text{ for } i<3.
\end{align}
\end{Thm}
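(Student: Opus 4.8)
This result is quoted verbatim from \cite{LanSuhII}, so I only sketch the strategy of \emph{loc.\ cit.}; the plan is to transport the statement from $\ell$-adic étale cohomology to coherent cohomology in characteristic $\ell$ and win there by a positivity argument. First I would work over $\mathbb{Z}_{(\ell)}$ (the Siegel datum has reflex field $\mathbb{Q}$), take the smooth PEL integral model $\mathcal{Y}_U$ of $Y_U$, and choose a smooth projective toroidal compactification $\mathcal{Y}_U^{\mathrm{tor}}$ with normal-crossings boundary divisor $D$, as well as the minimal compactification $\mathcal{Y}_U^{\min}$, on which the Hodge line bundle $\omega$ is ample. The local system $\mathbb{V}_{a,b}$, being a subquotient of a tensor power of $R^1$ of the universal abelian scheme, has a de Rham avatar: a filtered vector bundle with integrable log connection on $\mathcal{Y}_U^{\mathrm{tor}}$. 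Because $\ell > a+b+4$, the Hodge--Tate weights are small enough for a comparison theorem to identify $H^\bullet_c(Y_{U,\overline{\qqq}}, \mathbb{V}_{\mathcal{O}}/\varpi^n)$ and $H^\bullet(Y_{U,\overline{\qqq}}, \mathbb{V}_{\mathcal{O}}/\varpi^n)$ with the integral log-de-Rham hypercohomology of this flat bundle, twisted by $\mathcal{O}_{\mathcal{Y}_U^{\mathrm{tor}}}(-D)$ in the compactly supported case and untwisted otherwise.

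The second step is the dual BGG resolution of Faltings--Chai: this log-de-Rham complex is quasi-isomorphic to a complex of coherent sheaves whose term in degree $j$ is the direct sum, over Weyl-group elements $w$ of $\gsp_4$ with $\ell(w) = j$, of the canonical (resp.\ subcanonical, i.e.\ $(-D)$-twisted) extension of the coherent automorphic bundle $\mathcal{W}_{w\cdot(a,b,c)}$ of weight $w\cdot(a,b,c)$. The constraint $\ell > a+b+4$ re-enters here to ensure that all the weights $w\cdot(a,b,c)$ lie deep enough in the lowest alcove that the mod-$\ell$ rational representation theory of $\gsp_4$ behaves like its characteristic-zero counterpart, so that the integral BGG complex is exact with locally free terms. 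The problem is thereby reduced to the vanishing, over $\mathbb{Z}_{(\ell)}$, of $H^j$ of the (twisted) canonical extension of $\mathcal{W}_{w\cdot(a,b,c)}$ for $j$ outside the range dictated by $\ell(w)$.

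Here the regularity hypothesis $a > b > 0$ is essential: it forces each $w\cdot(a,b,c)$ to be regular, so that on $\mathcal{Y}_U^{\min}$ the relevant (twisted) automorphic bundle can be written as a power of the ample bundle $\omega$ tensored with a bundle with no higher cohomology, the power being large enough that the length shift $\ell(w)$ moves the single nonvanishing degree to the middle, $3$. In characteristic zero one concludes by Grauert--Riemenschneider / Akizuki--Nakano vanishing on $\mathcal{Y}_U^{\min}$ (which has rational singularities, by Faltings). For the integral statement one instead invokes the Deligne--Illusie-type vanishing theorem in characteristic $\ell$: $\mathcal{Y}_U^{\mathrm{tor}}$ lifts modulo $\ell^2$ and has relative dimension $3 < \ell$, so Kodaira-type vanishing holds on its special fiber, and a base-change/semicontinuity argument gives the asserted vanishing of $H^i_c$ for $i>3$ and of $H^i$ for $i<3$ with $\mathcal{O}$-coefficients, and (applying the whole argument to $\mathbb{V}_{\mathcal{O}}\otimes\mathbb{F}$) also with $\mathbb{F}$-coefficients. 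The freeness of $H^3_!$ and the surjectivity of reduction mod $\varpi$ then follow formally from these vanishing ranges via the universal-coefficient sequences and Poincaré duality --- e.g.\ $H^2(Y_{U,\overline{\qqq}}, \mathbb{V}_{\mathcal{O}}\otimes\mathbb{F}) = 0$ makes $H^3$ torsion-free, hence so is its subquotient $H^3_!$, and $H^4_c(Y_{U,\overline{\qqq}}, \mathbb{V}_{\mathcal{O}}\otimes\mathbb{F})=0$ gives the surjectivity.

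The main obstacle is the boundary. Passing between the canonical and subcanonical extensions --- i.e.\ between $H$ and $H_c$ --- and showing that the coherent cohomology supported on the boundary strata of $\mathcal{Y}_U^{\mathrm{tor}}$ and $\mathcal{Y}_U^{\min}$ also vanishes in the required range requires a careful analysis of the stratification and of the mixed automorphic bundles attached to the cusps, carried out in tandem with the bookkeeping that keeps both the mod-$\ell$ BGG complex and the mod-$\ell$ Kodaira vanishing simultaneously valid; it is exactly this interplay that pins down the precise hypotheses that $U_\ell$ be hyperspecial and that $\ell > a+b+4$.
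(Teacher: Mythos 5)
This theorem is not proved in the paper at all --- it is imported wholesale from Lan--Suh, as the bracketed attribution \emph{[Theorem 10.1, Corollary 10.2 of \cite{LanSuhII}]} makes explicit. The only ``proof content'' in the paper is the remark following Corollary \ref{rationalvanishing}, which merely unwinds Lan--Suh's abstract hypotheses on $\mu = (a,b,c)$ (membership in $X_{\gsp_4}^{++,<\ell}$, $|\mu|_{\mathrm{re},+}<\ell$, $|\mu|'_{\mathrm{comp}} \le \ell-2$) and checks that, for $\gsp_4$, they reduce to exactly $a>b>0$ and $\ell > a+b+4$. You correctly recognize that the theorem is cited and give a sketch of \emph{loc.~cit.} instead, which is the right move.

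Your sketch does capture the skeleton of Lan--Suh's argument: working over $\mathbb{Z}_{(\ell)}$ with good toroidal and minimal compactifications, replacing the local system by its filtered log--de-Rham avatar, invoking the dual BGG resolution (with the length-$\ell(w)$ bookkeeping and the canonical/subcanonical dichotomy for $H$ versus $H_c$), using the regularity $a>b>0$ and the Fontaine--Laffaille-range bound $\ell > a+b+4$ to make both the mod-$\ell$ BGG complex and the étale-to-coherent comparison behave, and concluding the integral vanishing by a Deligne--Illusie/Esnault--Viehweg-type Kodaira vanishing on the mod-$\ell$ fibre (which lifts mod $\ell^2$ and has dimension $3 < \ell$). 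The freeness and surjectivity statements then do follow formally from the vanishing ranges and the long exact sequence for $\mathbb{V}_{\mathcal{O}} \xrightarrow{\varpi} \mathbb{V}_{\mathcal{O}} \to \mathbb{V}_{\mathcal{O}} \otimes \mathbb{F}$, as you say. Two small inaccuracies worth flagging: Lan--Suh do not argue via Grauert--Riemenschneider on the minimal compactification; the positivity is pulled back from the ample $\omega$ on $\mathcal{Y}^{\min}$, but the actual vanishing is carried out on the smooth projective toroidal model, where the boundary contributions can be controlled. And ``the length shift $\ell(w)$ moves the single nonvanishing degree to the middle'' is loose --- the BGG spectral sequence contributes terms in degrees $\ell(w) + k$, and what Lan--Suh prove is that for each $w$ the (sub)canonical extension of $\mathcal{W}_{w\cdot\mu}$ has coherent cohomology concentrated in degrees $\ge d - \ell(w)$ (resp.\ $\le d-\ell(w)$), not ``in the middle.'' Neither affects your conclusion; for a blind sketch of a cited result this is entirely adequate.
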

Choosing $\ell$ sufficiently large and using comparison results between singular cohomology and \'etale cohomology we get results for local systems with coefficients in $L$, without restrictions on $\ell$.
\begin{Cor} \label{rationalvanishing}
If we now use $\mathbb{V}_{a,b}$ to denote an $L$-linear local system of weight $a>b>0$ then we have
\begin{align}
    H^i_{c}(Y_{U, \overline{\qqq}}, \mathbb{V}_{a,b})&=0 \text{ for } i>3 \\
    H^i(Y_{U, \overline{\qqq}}, \mathbb{V}_{a,b})&=0 \text{ for } i<3.
\end{align}
\end{Cor}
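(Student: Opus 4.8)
The plan is to bootstrap the integral vanishing of Theorem \ref{vanishingA} up to the rational statement by a limiting argument over auxiliary primes, combined with the comparison between $\ell$-adic \'etale and singular cohomology. First I would fix the local system $\mathbb{V}_{a,b}$ of $L$-linear coefficients of weight $a > b > 0$, where a priori $L$ is a finite extension of $\qqq_\ell$ for some specific $\ell$ that need not satisfy $\ell > a+b+4$. The key observation is that $H^i_c(Y_{U,\overline{\qqq}}, \mathbb{V}_{a,b})$ and $H^i(Y_{U,\overline{\qqq}}, \mathbb{V}_{a,b})$ are \'etale realisations of motives: for a \emph{different} prime $\ell' > a+b+4$ with $U_{\ell'}$ hyperspecial (such $\ell'$ exist since $U$ is neat and we can take $\ell'$ arbitrarily large, avoiding the finitely many primes where $U$ is not hyperspecial), the Betti cohomology groups $H^i_c(Y_U(\cc), \mathbb{V}_{a,b})$ and $H^i(Y_U(\cc), \mathbb{V}_{a,b})$ computed with $\cc$-coefficients compare, via the fixed isomorphism $\cc \cong \overline{\qqq}_{\ell'}$ and Artin's comparison theorem, with the $\ell'$-adic \'etale cohomology $H^i_c(Y_{U,\overline{\qqq}}, \mathbb{V}_{a,b} \otimes \overline{\qqq}_{\ell'})$ and its uncompactified analogue.

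Next I would run the integral argument at the prime $\ell'$. By Theorem \ref{vanishingA} applied with coefficients $\mathbb{V}_{\mathcal{O}'}$ over the ring of integers $\mathcal{O}'$ of a finite extension of $\qqq_{\ell'}$, we get $H^i_c(Y_{U,\overline{\qqq}}, \mathbb{V}_{\mathcal{O}'}) = 0$ for $i > d = 3$ and $H^i(Y_{U,\overline{\qqq}}, \mathbb{V}_{\mathcal{O}'}) = 0$ for $i < d$. Tensoring with $\overline{\qqq}_{\ell'}$ (which is exact) gives the same vanishing with $\overline{\qqq}_{\ell'}$-coefficients, hence with $\cc$-coefficients via the comparison isomorphism above. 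This shows the \emph{singular} cohomology groups $H^i_c(Y_U(\cc), \mathbb{V}_{a,b})$ and $H^i(Y_U(\cc), \mathbb{V}_{a,b})$ vanish in the asserted ranges, and this is a statement purely about the topological space $Y_U(\cc)$ and the $\cc$-local system, independent of $\ell'$.

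Finally I would descend back to the original prime $\ell$. Since the vanishing of singular cohomology with $\cc$-coefficients is now established, and since $\mathbb{V}_{a,b}$ (as an $L$-local system for the original $\ell$) base-changes to a $\cc$-local system under $\cc \cong \overline{\qqq}_\ell$ — or more simply, since singular cohomology with coefficients in a local system of $\overline{\qqq}_\ell$-vector spaces is computed by the same complex whether we view the coefficient field as a subfield of $\cc$ or not — the comparison $H^i_c(Y_{U,\overline{\qqq}}, \mathbb{V}_{a,b}) \cong H^i_c(Y_U(\cc), \mathbb{V}_{a,b} \otimes_L \cc)$ (Artin comparison at $\ell$) transports the vanishing to the $\ell$-adic \'etale groups, with no constraint on $\ell$ remaining. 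The only subtlety, and the point I would be most careful about, is making sure the local system $\mathbb{V}_{a,b}$ is genuinely defined over $\qqq$ in a way that is compatible across the two primes $\ell$ and $\ell'$: this is exactly the content of \S\ref{automorphiclocalsystems}, where $\mathbb{V}_{a,b}$ arises as a summand of $\mathbb{V}_{1,0}^{\otimes(a+b)}$ with $\mathbb{V}_{1,0}$ the Tate module of the universal abelian variety, a motivic object whose realisations at all primes are uniformly controlled; so the summand decomposition and hence the local system is defined integrally away from the bad primes, and the comparison isomorphisms at $\ell$ and at $\ell'$ both identify the \'etale cohomology with the same rational (indeed $\qqq$-) Betti cohomology tensored up. I expect this compatibility bookkeeping to be the main obstacle, everything else being a formal consequence of Theorem \ref{vanishingA} and standard comparison theorems.
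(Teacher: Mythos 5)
Your proposal is correct and takes essentially the same approach as the paper: the paper's own justification (in the remark preceding the corollary statement) is precisely to choose $\ell$ large so that Theorem \ref{vanishingA} applies and then invoke the comparison between singular and \'etale cohomology to remove the constraint on $\ell$. You have simply unwound the details of that argument, including the (correct, and worth making explicit) observation that the automorphic local system is realised across all primes from the same $\qqq$-object so that the two Artin comparison isomorphisms at $\ell$ and $\ell'$ are comparing against the same Betti group.
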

\begin{Rem}
Theorem 10.1 of op. cit. has the assumptions that $\mu=(a,b,c) \in X_{\gsp_{4}}^{++,<\ell}$, that $|\mu|_{\text{re},+}<\ell$ and that $|\mu|'_{\text{comp}} \le \ell-2$. Working through the definitions we find the first condition is equivalent to $a>b>0$ and $\ell \ge a+b+5$, the second condition is equivalent to $\ell \ge a+b+2$ and the third condition is equivalent to $\ell \ge a+b+4$. We conclude that the conditions $a>b>0$ and $\ell>a+b+4$ imply all the hypotheses of Theorem 10.1 of \cite{LanSuhII} in the case of Siegel modular threefolds.
\end{Rem}
\section{Algebraic modular forms} \label{Sec:AlgModForms}
In this section we will discuss algebraic modular forms on the group $G=GU_2(D)$. The general reference for this section is \cite{Gross}, we have also drawn from chapter 3 of \cite{FretwellThesis}.

Let $G/\qqq$ be a connected reductive group such that $G(\rr)$ is connected and compact modulo centre, in this paper we will use $G=D^{\times}$ and $G=GU_2(D)$ with $D/\qqq$ a definite quaternion algebra. Let $L/\qqq_{\ell}$ be a finite extension as before and let $V$ be an irreducible algebraic $L$-linear representation of $G(\qqq_{\ell})$. Let $U=U^{\ell}U_{\ell}$ be a compact open subgroup of $G(\mathbb{A}_f)$, then we define the $L$-vector space of algebraic modular forms of level $U$ with coefficients in $V$
\begin{align}
    A^{G}_{V}[U]:=\left\{ F: G(\qqq) \setminus G(\mathbb{A}_f)/U^{\ell} \to V \; | \; F(g \cdot u_{\ell}) = u_{\ell}^{-1} F(g) \text{ for } u_{\ell} \in U_{\ell} \right \}.
\end{align}
This vector space is finite dimensional because the double coset spaces
\begin{align}
G(\qqq) \setminus G(\mathbb{A}_f)/U
\end{align}
are finite (Proposition 4.3 of \cite{Gross}). The following result is well known and can be proven as in \cite[Section 2]{Guerberoff} or as in \cite[Section 2]{DiamondTaylor}:
\begin{Prop}
Choose an embedding $L \xhookrightarrow{} \mathbb{C}$, then there is a Hecke-equivariant isomorphism
\begin{align}
    A^{G}_{V}[U] \otimes_{E} \mathbb{C} \cong \bigoplus_{\sigma} m(\sigma) \sigma^U
\end{align}
where $\sigma$ runs over cuspidal automorphic representations of $G$ such that $\sigma_{\infty} \cong V \otimes_{E} \mathbb{C}$ and $m(\sigma)$ is the multiplicity with which $\sigma$ occurs in the discrete spectrum of $G$.
\end{Prop}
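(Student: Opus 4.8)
The statement to prove is the comparison between algebraic modular forms and the discrete automorphic spectrum of $G$, namely that after choosing $L \hookrightarrow \cc$ one has a Hecke-equivariant isomorphism $A^G_V[U] \otimes_E \cc \cong \bigoplus_\sigma m(\sigma)\sigma^U$, where $\sigma$ runs over cuspidal automorphic representations of $G$ with $\sigma_\infty \cong V \otimes_E \cc$. The plan is to follow the standard argument (as in \cite[Section 2]{Guerberoff}), exploiting the fact that $G(\rr)$ is compact modulo centre, so that \emph{all} of the automorphic spectrum of $G$ away from the central directions is discrete and in fact cuspidal, and the analytic subtleties that appear for non-compact groups disappear.

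First I would unwind the definition: an element of $A^G_V[U]$ is a function $F: G(\qqq)\backslash G(\adelesq^\infty)/U^\ell \to V$ transforming under $U_\ell$ by $F(gu_\ell) = u_\ell^{-1}F(g)$. Tensoring with $\cc$ via $L\hookrightarrow\cc$, I view $V\otimes_E\cc$ as the archimedean representation; since $G(\rr)$ is compact modulo centre, $V\otimes_E\cc$ is (the restriction to a compact-mod-centre group of) an irreducible unitary representation, and one can build from $F$ a function $\widetilde F$ on $G(\qqq)\backslash G(\adelesq)$ valued in $V\otimes_E\cc$ by spreading out over the archimedean place using the transitive action, then contract with a vector in the dual to land in scalar-valued automorphic forms. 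Concretely, $\Hom_{G(\rr)}\!\big(V\otimes_E\cc,\, C^\infty(G(\qqq)\backslash G(\adelesq))\big)$ together with the $U_\ell$-transformation recovers exactly $A^G_V[U]\otimes_E\cc$; this is a purely formal manipulation (Frobenius reciprocity at the archimedean place plus descent along $U^\ell$-cosets).

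Next I would invoke the structure of $L^2(G(\qqq)\backslash G(\adelesq))$: because $G(\rr)$ is compact modulo centre, $G(\qqq)\backslash G(\adelesq)^1$ is compact (here $G(\adelesq)^1$ is the subgroup on which all $\qqq$-rational characters have absolute value one), so $L^2$ of this quotient decomposes discretely as $\bigoplus_\sigma m(\sigma)\sigma$ with finite multiplicities, and all these $\sigma$ are automorphic — indeed cuspidal, since $G$ has no proper parabolic subgroups defined over $\qqq$ that would support a constant term (or, more precisely, the constant-term/cuspidality distinction is vacuous because compactness forces every automorphic form to be cuspidal). Matching up: a nonzero Hecke-equivariant map into the automorphic spectrum picks out exactly those $\sigma$ with $\sigma_\infty \cong V\otimes_E\cc$ (the archimedean component is pinned down because an irreducible representation of a compact group occurring in $C^\infty$ is determined up to isomorphism), and for each such $\sigma$ the space of maps is $\sigma^U$ with multiplicity $m(\sigma)$. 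The finite-dimensionality of $A^G_V[U]$ is already recorded via Proposition 4.3 of \cite{Gross} (finiteness of the double coset space), which is consistent since only finitely many $\sigma$ can contribute.

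The main obstacle — really the only nontrivial point — is the careful bookkeeping around the centre and the similitude character: $G(\adelesq)$ is not compact mod $G(\qqq)$, only $G(\adelesq)^1$ is, so one must be slightly careful that $V$ having a fixed central character, together with the level $U$ constraint, cuts things down to the compact quotient before applying the discrete decomposition; and one must check the identification is Hecke-equivariant for the prime-to-$\ell$ (indeed all away-from-level) Hecke algebra, which is immediate since the Hecke action on both sides is by the same convolution operators on $G(\qqq)\backslash G(\adelesq^\infty)/U^\ell$. None of this is deep; it is the reason the proof is relegated to a reference, and I would simply cite \cite[Section 2]{Guerberoff} for the details after indicating the shape of the argument above.
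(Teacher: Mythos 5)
Your proposal is correct and takes essentially the same approach as the paper, which offers no proof at all beyond the pointer to \cite[Section 2]{Guerberoff}. Your sketch simply spells out the standard Matsushima-style argument behind that citation (Frobenius reciprocity at the archimedean place, discrete and automatically cuspidal decomposition of $L^2$ for a group that is anisotropic modulo centre, matching of $\sigma_\infty$ with $V\otimes_E\cc$, and Hecke-equivariance by convolution on the finite ad\`eles), and correctly ends by deferring the bookkeeping to the same reference.
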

\subsection{The inner form}
Let $D$ be the  definite quaternion algebra over $\qqq$ ramified at a single prime $p$ with canonical involution $d \mapsto \overline{d}$ and consider the algebraic group $G/\qqq$ whose $R$ points are given by
\begin{align}
    G(R):=\left\{ (g, \lambda) \in \gl_2(D \otimes_{\qqq} R) \times R^{\times} \; | \; ^t\overline{g}g = \lambda I \right\},
\end{align}
where $\overline{g}$ is entry-wise application of $d \mapsto \overline{d}$ and $I$ is the identity matrix. One can show that $G(\rr)/Z(G(\rr)) \cong \operatorname{USp}(4)/ \{ \pm I \}$ where $\operatorname{USp}(4)$ is the compact $\rr$-form of $\operatorname{Sp}_4$ and that for primes $q \not=p$ there are isomorphisms $G(\qqq_q) \cong \gsp_4(\qqq_q)$. Both of these results are worked out in detail in Section 3.5 of \cite{FretwellThesis}. The statement of Conjecture \ref{Conj1} concerns algebraic modular forms of weight $k,j$ and level $K_2(p)$ and we will discuss these now.
\subsection{Weights}
For $\ell \not=p$ choose an isomorphism $G_{\mathbb{Q}_{\ell}} \simeq \gsp_{4, \mathbb{Q}_{\ell}}$. For every irreducible representation $V$ of $\gsp_4$ we get an irreducible representation $V$ of $G$ defined over $\mathbb{Q}_{\ell}$, as before these are parametrised parametrized by integers $(a \ge b \ge 0, c)$, just like the local systems on Siegel threefolds. Siegel modular forms of weight $k,j$ correspond to the local system of weight $k+j-3, j-3, k+2j-6$ and we define
\begin{align}
    \mathcal{A}^G_{k,j}[U]:=\mathcal{A}^G_{V}[U],
\end{align}
where $V$ denotes the irreducible representation of highest weight $(k+j-3,j-3, k+2j-6)$. We will later see that cohomology of $\mathbb{V}_{a,b}$ with support in the supersingular locus of the Siegel modular threefold can be identified with a space of algebraic modular forms for $G$ of weight $a-b,b+3$.
\subsection{Levels} \label{localII}
Our level subgroups $U \subset G(\mathbb{A}_f)$ will be of the form $U^p U_p$. The level $U^p$ will come from $\gsp_4$ under the isomorphism $G(\mathbb{A}_f^{p}) \cong \gsp_4(\mathbb{A}_f^{p})$, the level $U_p$ will be a maximal compact subgroup of $G(\qqq_p)$ which we will describe now. Let $B$ be the non-split quaternion algebra over $\mathbb{Q}_p$ with standard involution $b \mapsto \overline{b}$ and uniformiser $\varpi_B$, then $G(\qqq_p)$ can also be described as (because all Hermitian forms on $B^{\oplus 2}$ are equivalent)
\begin{align}
    G(\qqq_p) = \left \{(g, \lambda) \in \gl_2(B) \times \qqq_p^{\times} : \overline{g} \begin{pmatrix} 0 & 1 \\ 1 & 0 \end{pmatrix} g = \lambda \begin{pmatrix} 0 & 1 \\ 1 & 0 \end{pmatrix} \right \}.
\end{align}
We will be interested in the maximal compact subgroup 
\begin{align}
    K_2&=\left \{(g, \lambda) \in  G(\qqq_p) : g \in \left( \begin{smallmatrix} \mathcal{O}_B & \varpi_B \mathcal{O}_B \\ \varpi_B^{-1} \mathcal{O}_B & \mathcal{O}_B \end{smallmatrix} \right), \; \lambda \in \zz_p^{\times} \right \},
\end{align}
which is the stabiliser $\mathcal{O}_B \oplus \varpi_B \mathcal{O}_B$. Now let $N$ be a squarefree integer with $p \mid N$, then we define
\begin{align}
    K_2(N):=K_2(p) \times \prod_{v \mid \tfrac{N}{p}} K(v) \prod_{v \nmid N} \gsp_4(\zz_v).
\end{align}
\subsection{Theta lifts}
In section 3 of \cite{IbukiyamaParamodular} Ibukiyama defines a notion of '$\theta$-lift' and suggests that these are precisely the algebraic modular forms that do \emph{not} correspond to holomorphic Siegel modular forms. He predicts (Conjecture 5.3 of \cite{IbukiyamaConjecture}) that the $\theta$-lifting gives injective maps to $\mathcal{A}^G_{k,j}[K_2(p)]$ from
\begin{align} \label{lifts}
\twopartdef{S_{2j-2+k}[\Gamma_0(1)] \times S_{k+2}[\Gamma_0(p)]^{\text{new}}}{k>0 \text{ or } j \text{ odd}}{S_{2j-2}[\Gamma_0(1)] \times M_{2}[\Gamma_0(p)] }{k=0 \text{ and } j \text{ even}}.
\end{align}
We remark that $\operatorname{dim} M_{2}[\Gamma_0(p)]=1+\operatorname{dim} S_{2}[\Gamma_0(p)]$ so that Ibukiyama's formula essentially predicts a compact version of the Saito-Kurokawa lifting
\begin{align}
S_{2j-2}[\Gamma_0(1)] \to \mathcal{A}^G_{0,j}[K_2(N)]
\end{align}
for odd $j$. We will actually prove that there are injective lifts from \eqref{lifts} to cuspidal automorphic representations of $G$, but it is unclear to us that these are $\theta$-lifts in the sense of Ibukiyama (which is why we do not prove Conjecture 5.1 of \cite{IbukiyamaConjecture} but something slightly different). It will follow from our arguments that most of these lifts (all if $k>0,j>3$) will be weakly endoscopic in the following sense:
\begin{Def}
A cuspidal automorphic representation $\pi$ of $G$ is called weakly endoscopic if there is a cuspidal automorphic representation $\pi_1 \boxtimes \pi_2$ of $\gl_2(\mathbb{A}) \times \gl_2(\mathbb{A})$ such that the Satake parameters of $\pi$ and $\pi_1 \boxtimes \pi_2$ agree for almost all primes via the inclusion of dual groups $\gl_2 \times_{\mathbb{G}_m}\gl_2 \to \gsp_4$.
\end{Def}
\section{Integral models of Siegel modular threefolds} \label{integral}
In this section we will discuss Siegel threefolds with various parahoric level structures at a fixed prime $p$, define integral models and describe their singularities, following \cite{Tilouine} and \cite{YuParamodular}. We will then study the natural maps between these models and compute the fibers in characteristic $p$. The supersingular locus of the Siegel threefold with good reduction is well understood by classical work of Katsura-Oort \cite{KatsuraOort} (c.f. \cite{KudlaRapoport}). Combining this description with our understanding of the fibers of the natural maps allows us to describe the supersingular loci of the other Siegel threefolds (c.f.\cite{YuSuperSingular}).
\subsection{Moduli functors}
Let $U^p \subset \gsp_4(\mathbb{A}^{p,\infty})$ be a fixed compact open subgroup away from $p$. Following section 2 of \cite{Tilouine}, we define moduli problems for three different parahorics levels at p, which are defined over $\mathbb{Z}_p$.
\begin{itemize}
    \item The moduli functor $F_H$ of hyperspecial level $H$ which parametrizes prime-to-$p$ isogeny classes of triples $(A, \lambda, \eta)$ where $A$ is an abelian scheme of relative dimension two, $\lambda$ is a prime-to-$p$ polarisation and $\eta$ is a $U^p$ level structure (c.f. \cite[Definition 1.3.7.1.]{LanThesis}).
        
    \item The moduli functor $F_K$ of paramodular level $K$ which parametrizes prime-to-$p$ isogeny classes of triples $(A, \lambda, \eta)$ where $A$ is an abelian scheme of relative dimension two, $\lambda$ is a polarisation such that $\ker \lambda \subset A[p]$ has rank $p^2$, and $\eta$ is a $U^p$ level structure.
        
    \item The moduli functor $F_Q$ of Klingen level $Q$ which parametrizes prime-to-$p$ isogeny classes of quadruples $(A, \lambda, \eta, H)$ where $A$ is an abelian scheme of relative dimension two, $\lambda$ is a prime-to-$p$ polarisation, $\eta$ is a $U^p$ level structure and $H \subset A[p]$ is a finite locally free group scheme of rank $p$.
\end{itemize}
There are natural maps
\begin{equation}
    \begin{tikzcd}
    & F_{Q} \arrow{dr}{b} \arrow{dl}[swap]{a} \\
    F_H && F_K
    \end{tikzcd}
\end{equation}
where $a$ is the forgetful map and $b$ is the map that takes $A$ to $A/H$ with the induced polarisation and level structure. To elaborate, a sub-group scheme $H \subset A[p]$ of rank $p$ is automatically isotropic for the Weil pairing induced by $\lambda$ and hence the polarisation $p \cdot \lambda$ descends uniquely to a polarisation on $A/H$ (c.f. Proposition 11.25 of \cite{MoonenAV}). The following result is standard (c.f. \cite[Theorem 1.4.1.11]{LanThesis})
\begin{Prop}
If $U$ is neat (c.f. \cite[Definition 1.4.1.8]{LanThesis}), then the functors $F_{\star}$ are representable by quasi-projective $\mathbb{Z}_p$ schemes $A_H, A_Q, A_K$. The first one is smooth while the latter two have smooth generic fibres. Moreover the maps $a$ and $b$ are proper, finite étale on the generic fibre and induce bijections on geometrically connected components.
\end{Prop}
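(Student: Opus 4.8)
The plan is to recognise all three functors as (variants of) PEL moduli problems, quote the representability and smoothness results of \cite{LanThesis} and \cite{Tilouine}, and then reduce the statements about $a$ and $b$ to bookkeeping with finite flat group schemes. First I would treat $F_H$: this is the usual Siegel moduli functor with prime-to-$p$ polarisation and a neat $U^p$-level structure, which by \cite[Theorem 1.4.1.11]{LanThesis} is representable by a smooth quasi-projective $\mathbb{Z}_p$-scheme $A_H$. Here neatness is exactly what rigidifies the problem (it kills automorphisms), and smoothness is the Grothendieck--Messing/Serre--Tate statement that the deformation problem of a prime-to-$p$ polarised abelian scheme with prime-to-$p$ level structure is formally smooth of the expected dimension. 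For $F_K$ one has to observe that imposing that $(\ker\lambda)[p]$ have rank $p^2$ is still PEL data --- it amounts to fixing a non-self-dual (paramodular) lattice at $p$ --- so the same representability result, in the parahoric generality, applies and produces a quasi-projective $\mathbb{Z}_p$-scheme $A_K$. For $F_Q$, the extra datum $H\subset A[p]$ of rank $p$ is relatively representable over $F_H$: the functor of finite locally free rank-$p$ subgroup schemes of the finite flat group scheme $A[p]$ is a closed subfunctor of a Hilbert scheme, representable by a scheme finite over $F_H$, so $A_Q$ exists and is quasi-projective.

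Next I would handle the generic fibres of $A_K$ and $A_Q$. Over $\mathbb{Q}_p$ the level structure at $p$ becomes étale: both $(\ker\lambda)[p]$ and $H$ are étale group schemes in characteristic $0$, so $A_{K,\mathbb{Q}_p}$ and $A_{Q,\mathbb{Q}_p}$ are, étale-locally on the base, isomorphic to the good-reduction Siegel variety --- equivalently, they are (open subschemes of) the smooth $\mathbb{Q}_p$-Shimura varieties for $\gsp_4$ of paramodular, respectively Klingen, level. In particular they are smooth over $\mathbb{Q}_p$. The special fibres are of course singular --- this is precisely what is studied later, following \cite{YuParamodular} --- but that is not asserted here.

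Finally, for the maps $a$ and $b$: the map $a$ is the structure morphism of the relative Hilbert scheme of rank-$p$ subgroup schemes of $A[p]$ over $A_H$, and since $A[p]$ is finite flat this Hilbert scheme is finite over $A_H$; hence $a$ is finite, in particular proper. For $b$ one first checks well-definedness: a rank-$p$ subgroup $H\subset A[p]$ is automatically isotropic for the $\lambda$-Weil pairing, because an alternating form on a cyclic $p$-group vanishes, so $\lambda$ descends to a polarisation $\lambda'$ on $A/H$ (c.f.\ \cite{MoonenAV}) with $(\ker\lambda')[p]$ identified with $H^{\perp}/H$, a group scheme of rank $p^{2}$, which lands us in $F_K$. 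The map $b$ is again finite, because $A/H$ determines $A$ up to the rank-$p^{3}$ subgroup $\phi(A[p])\subset (A/H)[p]$ (where $\phi\colon A\to A/H$), so the fibres of $b$ are finite Hilbert schemes of subgroups; finite implies proper. On the generic fibre all the group schemes in sight ($A[p]$, $A[p^{2}]$, $\ker\lambda$) are étale, so the relevant subgroup Hilbert schemes are finite étale, and the isotropy and polarisation-degree constraints cut out open-and-closed subschemes; hence $a$ and $b$ are finite étale over $\mathbb{Q}_p$.

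\textbf{The main obstacle} is not a deep step but rather the set-up: verifying that $F_K$, with its degree-$p^{2}$ polarisation, really falls within the scope of the representability theorem of \cite{LanThesis} (phrasing the level at $p$ via a lattice chain), and arranging the rigidifications compatibly so that $a$ and $b$ are honest morphisms of the representing schemes and $b$ has image in $F_K$. Once that is arranged, everything reduces to standard manipulations with finite flat group schemes as in \cite{Tilouine}.
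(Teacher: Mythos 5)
The overall strategy is sound and matches the standard route the paper is implicitly invoking (Lan's representability theorem for PEL/parahoric moduli, relative representability of $F_Q$ over $F_H$ via a Quot-type construction, and étaleness of everything at $p$ in characteristic zero). However, there is a genuine error in the argument for properness of $a$ and $b$: you claim both are \emph{finite} because the relevant Hilbert schemes of rank-$p$ subgroup schemes are finite over the base. This is false. The Quot scheme of rank-$p$ locally free quotients of a finite locally free sheaf of rank $p^4$ is \emph{projective} over the base, not finite, and in characteristic $p$ the fibres genuinely fail to be finite: Lemma \ref{fibersofalemma} of the paper shows $a^{-1}(A,\lambda,\eta)\cong\mathbb{P}^1$ when $A$ is superspecial, and Lemma \ref{fibersblemma} shows $b^{-1}(A,\lambda,\eta)\cong\mathbb{P}^1$ when $(A,\lambda,\eta)\in\Sigma$ (the positive-dimensional fibres are of course central to the whole paper). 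So $a$ and $b$ are proper precisely because they are \emph{projective}, not finite, and your argument should go through the projectivity of $\operatorname{Quot}$ rather than a finiteness claim. The same correction is needed for $b$: the fibres of $b$ are not ``finite Hilbert schemes of subgroups'', they are again closed subschemes of $\operatorname{Quot}$'s, hence projective, and that is what gives properness.

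A secondary, smaller imprecision: in checking well-definedness of $b$ you write that ``$\lambda$ descends to a polarisation on $A/H$'', but since $\lambda$ on $F_Q$ is \emph{prime-to-$p$}, its kernel has trivial $p$-part and $H\not\subset\ker\lambda$, so Proposition 11.25 of \cite{MoonenAV} (part 1) does not apply to $\lambda$ directly. What descends along $f\colon A\to A/H$ is $p\lambda$, whose kernel contains $A[p]\supset H$ and for which $H$ is isotropic; the resulting polarisation $\mu$ on $A/H$ with $f^{t}\mu f=p\lambda$ then has $(\ker\mu)[p]$ of rank $p^{2}$. This is almost certainly what you intended (and the paper is similarly terse), but it is worth phrasing correctly since otherwise the hypothesis $\ker g\subset\ker\alpha$ fails. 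Apart from these two points, the representability and generic-fibre arguments are correct and are essentially the argument the paper refers to via \cite[Theorem 1.4.1.11]{LanThesis}.
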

\subsection{Singularities}
In this section we will describe the bad reduction of $A_{K}$ and $A_Q$ following \cite{Tilouine} and \cite{YuParamodular}.

\begin{Prop}(Theorem 3 of \cite{Tilouine})
The geometric special fiber of the Siegel modular threefold of Klingen level decomposes as $\aksp=X^{\text{e}} \cup X^{\text{m}}$, where $X^{\text{e}}, X^{\text{m}}$ are smooth and intersect in a smooth surface $S$. Both $X^{\text{e}}$ and $X^{\text{m}}$ have a unique connected component lying over each connected component of $\ahsp$. The locus $X^{\text{e}} \setminus X^{\text{m}}$ parametrizes those quadruples where $H$ is \'etale, the locus $X^{\text{m}} \setminus X^{\text{e}}$ parametrizes those quadruples where $H$ is multiplicative and $S$ parametrizes the quadruples where $H$ is \'etale locally isomorphic to $\alpha_p$.
\end{Prop}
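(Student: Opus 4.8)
The plan is to reduce this statement about the geometric special fiber $\aksp$ to a computation of the complete local rings of $A_Q$ along it, using two inputs: Oort--Tate theory to coordinatise the universal order-$p$ subgroup $H$, and Grothendieck--Messing deformation theory for the polarised abelian surface. This is essentially the ``local model'' analysis of \cite{Tilouine}, and I expect the local ring computation to be the one genuinely delicate step.

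\emph{Oort--Tate coordinates.} By the classification of finite flat group schemes of order $p$, after an \'etale (equivalently, finite faithfully flat) cover of $A_Q$ trivialising the relevant invertible sheaf we may write the universal subgroup as $H = \operatorname{Spec}\mathcal{O}[x]/(x^p - ax)$ for a function $a$, with Cartier dual $H^{\vee} = \operatorname{Spec}\mathcal{O}[y]/(y^p - by)$, subject to $ab = w_p$ where $w_p \in \mathbb{Z}_p$ is a uniformiser; rescaling $b$ we arrange $ab = p$. The vanishing loci $\{a=0\}$ (``$H$ infinitesimal'') and $\{b=0\}$ (``$H^{\vee}$ infinitesimal'') glue to globally defined closed subschemes of $\aksp$, and reading off the defining equation shows that $a$ is invertible exactly where $H$ is \'etale, $b$ is invertible exactly where $H$ is of multiplicative type, and $\{a=b=0\}$ is exactly the locus where $H$ is geometrically isomorphic to $\alpha_p$; in characteristic $p$ one has $ab = 0$.

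\emph{Complete local rings.} Fix a geometric point $x$ of $\aksp$ lying over $(A,\lambda)$ and carrying the subgroup $H$. The deformation functor of $(A,\lambda)$ is formally smooth over $\mathbb{Z}_p$ of relative dimension $3$. When $H$ is \'etale or of multiplicative type, $H$ (respectively $H^{\vee}$) lifts uniquely along infinitesimal thickenings, so deformations of $(A,\lambda,H)$ agree with deformations of $(A,\lambda)$ and $\widehat{\mathcal{O}}_{A_Q,x} \cong \mathbb{Z}_p[[t_1,t_2,t_3]]$. When $H \cong \alpha_p$ one must compare the deformation of the line in the Dieudonn\'e crystal cut out by $H$ with the Oort--Tate coordinate $a$; carrying this out one finds
\begin{align}
\widehat{\mathcal{O}}_{A_Q,x} \;\cong\; \mathbb{Z}_p[[t_1,t_2,a,b]]/(ab - p).
\end{align}
In particular $A_Q$ is regular and $\aksp = V(p)$ is \'etale-locally $V(ab)$, hence \'etale-locally the union of the two smooth divisors $V(a)$ and $V(b)$, crossing transversally along the smooth codimension-two locus $V(a,b)$. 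This local identification --- matching the Oort--Tate parameter of $H$ to the Grothendieck--Messing deformation of the Hodge filtration --- is the step I expect to be hardest; everything else is formal given it.

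\emph{Globalisation and connected components.} Put $X^{\text{m}} := V(a)$ and $X^{\text{e}} := V(b)$ inside $\aksp$. By the previous step these are smooth threefolds with $\aksp = X^{\text{e}} \cup X^{\text{m}}$ and $S := X^{\text{e}} \cap X^{\text{m}} = V(a,b)$ a smooth surface, and the moduli descriptions of $X^{\text{e}} \setminus X^{\text{m}}$, $X^{\text{m}} \setminus X^{\text{e}}$ and $S$ are exactly those of the first step. It remains to count connected components. The ordinary locus $A_H^{\text{ord}} \subset \ahsp$ is dense and meets each connected component of $\ahsp$ in an irreducible (hence connected) open, being a smooth variety with a proper closed subset removed. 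Over it, the restriction of $a$ to the locus where $H$ is \'etale is a finite \'etale cover of $A_H^{\text{ord}}$ of degree $p+1$, with geometric fibre the set $\mathbb{P}^1(\mathbb{F}_p)$ of \'etale order-$p$ subgroups of $A[p]$; the mod-$p$ monodromy acts transitively on this fibre, so the cover is connected over each connected component of $\ahsp$. Since the locus where $H$ is \'etale misses the supersingular locus and is finite over the $p$-rank-one locus, this cover is dense in $X^{\text{e}}$, so $X^{\text{e}}$ has a unique connected component over each connected component of $\ahsp$; applying the same argument to $H^{\vee} \subset (A/H)[p]$ and the map $A_Q \to A_K$ handles $X^{\text{m}}$.
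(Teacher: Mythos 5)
The paper does not prove this Proposition; it is quoted as Theorem 3 of Tilouine's article \cite{Tilouine}, so there is no internal proof to compare your argument against. Your outline is nevertheless essentially Tilouine's argument (building on de Jong's analysis of $\Gamma_0(p)$-structures): Oort--Tate coordinates for the universal order-$p$ subgroup, a completed-local-ring computation matching the Oort--Tate parameters with crystalline deformation parameters, and a monodromy argument for the count of connected components.

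Three points to flag. First, you defer the key local computation --- that $\widehat{\mathcal{O}}_{A_Q,x}$ at an $\alpha_p$-point is $W[[t_1,t_2,a,b]]/(ab-p)$ --- which is exactly the content of Tilouine's theorem and the only non-formal step; as written this is an outline rather than a proof. Second, transitivity of the mod-$p$ monodromy on the $p+1$ lines in $A[p]^{\text{\'et}}$ is itself a theorem (irreducibility of the Igusa tower in genus two, Ekedahl and Chai--Oort) and should be cited, not just asserted. Third, the detour through $b\colon A_Q \to A_K$ to treat $X^{\text{m}}$ is both unnecessary and slightly more delicate than the $X^{\text{e}}$ case: it implicitly uses that each geometric component of $\apsp$ is irreducible, which needs an extra word because $A_K$ is only regular, not smooth (though it does hold, since $\apsp$ is pure $3$-dimensional with isolated singularities). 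The cleaner, symmetric route is to run the identical argument through the same map $a$: over the ordinary locus the connected part $A[p]^{0}$ is the Cartier dual of $A[p]^{\text{\'et}}$, so the same monodromy group acts transitively on the $p+1$ multiplicative lines, giving the component count for $X^{\text{m}}$ directly over $\ahsp$.
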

\begin{Prop}(Theorem 1.3 of \cite{YuParamodular})
The scheme $A_{K} / \mathbb{Z}_p$ is regular, and it is smooth over $\mathbb{Z}_p$ away from a finite set of closed points $\Sigma$ in the special fiber. At the singular points in the geometric special fibre $\apsp$ the completed local ring is isomorphic to
\begin{align}
    \overline{\mathbb{F}}_p \llbracket X,Y,Z,W \rrbracket/(XY-ZW).
\end{align}
The singular points $\Sigma$ correspond precisely to the polarized abelian surfaces $(A, \lambda)$ where $\ker \lambda$ is etale locally isomorphic to $\alpha_p \times \alpha_p$. Moreover for every connected component of $\apgen$ there is a unique connected component of $\apsp$.
\end{Prop}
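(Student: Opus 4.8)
The plan is to determine the \'etale-local structure of $A_K$ by comparing it with an explicit \emph{local model}, in the style of de Jong and Rapoport--Zink. The paramodular polarisation of the universal abelian scheme carries the isogeny $A\to A/(\ker\lambda)$, and this, together with the de Rham cohomology, provides exactly the extra structure needed to run Rapoport--Zink's theory: Grothendieck--Messing crystalline deformation theory produces a local model diagram $A_K\xleftarrow{\pi}\widetilde A_K\xrightarrow{\ell}M^{\mathrm{loc}}_K$ with $\pi$ and $\ell$ both smooth and surjective, where $M^{\mathrm{loc}}_K$ is the naive local model for $\gsp_4$ with paramodular parahoric level. Hence $A_K$ and $M^{\mathrm{loc}}_K$ are \'etale-locally isomorphic, so it suffices to prove the four assertions --- regularity, the locus where the structure map to $\spec\zz_p$ is non-smooth, the completed local rings along $\apsp$, and the description of $\Sigma$ --- for $M^{\mathrm{loc}}_K$ and transport them back; flatness of $A_K$ over $\zz_p$ likewise follows, $M^{\mathrm{loc}}_K$ being visibly flat.

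Next I would write down $M^{\mathrm{loc}}_K$. The paramodular group is the stabiliser of a lattice $\Lambda\subset\qqq_p^4$ that is self-dual up to the scalar $p$, so $\Lambda^\vee\subsetneq\Lambda$ with $\Lambda/\Lambda^\vee\cong(\zz/p)^2$, giving a $2$-periodic self-dual lattice chain $\Lambda^\vee\subset\Lambda\subset p^{-1}\Lambda^\vee$. The model $M^{\mathrm{loc}}_K$ is then the closed subscheme of a product of two copies of $\operatorname{Gr}(2,4)$ parametrising rank-$2$ direct summands $\mathcal F\subseteq\Lambda\otimes R$ and $\mathcal F'\subseteq\Lambda^\vee\otimes R$ that are totally isotropic for the induced pairings, compatible with the chain transition maps, and mutually dual under the perfect pairing $\Lambda\times\Lambda^\vee\to R$. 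Choosing a symplectic basis adapted to the chain and passing to the affine chart around the ``worst'' point $y$ --- where $\mathcal F\bmod p$ equals the image of $\Lambda^\vee\otimes\overline{\mathbb F}_p\to\Lambda\otimes\overline{\mathbb F}_p$ --- the isotropy and compatibility equations can be solved for all but four coordinates, leaving one relation which, after a linear change of variables, reads $XY-ZW=p$. Thus $\widehat{\mathcal O}_{M^{\mathrm{loc}}_K,\,y}\cong W(\overline{\mathbb F}_p)\llbracket X,Y,Z,W\rrbracket/(XY-ZW-p)$: this ring is regular (the relation lies outside the square of the maximal ideal, since $p$ does), it is flat over $\zz_p$, and it reduces mod $p$ to $\overline{\mathbb F}_p\llbracket X,Y,Z,W\rrbracket/(XY-ZW)$; at every other point $M^{\mathrm{loc}}_K\to\spec\zz_p$ is smooth, and the generic fibre is smooth of dimension $3$. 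Transporting along the local model diagram, $A_K$ is regular, smooth over $\zz_p$ outside a closed subset $\Sigma$ of its special fibre, and $\apsp$ has the asserted completed local rings at the points of $\Sigma$.

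It remains to identify $\Sigma$ and to show it is finite. A point of $A_K$ maps to $y$ exactly when the Hodge filtration $\omega_A\subseteq H^1_{\mathrm{dR}}(A)$ occupies the most special position relative to the chain; unwinding this through (contravariant) Dieudonn\'e theory says precisely that $(\ker\lambda)[p]\cong\alpha_p\times\alpha_p$, rather than one of the other Cartier-self-dual finite flat group schemes of order $p^2$. Such an $A$ contains $\alpha_p\times\alpha_p$, hence has $a$-number $2$ and is superspecial; since superspecial abelian surfaces with a fixed polarisation type and level form a finite set, $\Sigma$ is finite, and it lies in the special fibre because the generic fibre is smooth. Finally, the statement about connected components is a $\pi_0$-bookkeeping: using that $A_K$ is normal (being regular) and flat over $\zz_p$, it follows from the corresponding facts for $A_H$ (classical, good reduction) and $A_Q$ (Tilouine's description of $\aksp$), transported along the proper maps $A_H\xleftarrow{a}A_Q\xrightarrow{b}A_K$ using that $a$ and $b$ are bijective on geometric connected components.

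\textbf{The main obstacle} is the explicit local-model computation at $y$: setting up the self-dual paramodular lattice chain correctly, translating the isotropy and incidence conditions into coordinates, and verifying that exactly one relation of the shape $XY-ZW-p$ survives the elimination (equivalently, computing the versal deformation ring of a superspecial $(A,\lambda)$ carrying a paramodular structure). The remaining ingredients --- constructing the local model diagram, the $\alpha_p$-characterisation via Dieudonn\'e theory, the superspecial finiteness count, and the $\pi_0$-bookkeeping --- are comparatively formal.
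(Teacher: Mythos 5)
The paper does not give its own proof of this Proposition --- it cites Theorem 1.3 of Yu's \cite{YuParamodular} and uses the statement as a black box --- so there is no internal argument to compare against. Your proposal via Rapoport--Zink local models is the correct and standard route, and it is essentially what Yu's cited paper does: build a local model diagram via Grothendieck--Messing crystalline deformation theory, identify $A_K$ \'etale-locally with the paramodular local model for $\gsp_4$, and compute that model explicitly. The shape of the answer ($W(\overline{\mathbb{F}}_p)\llbracket X,Y,Z,W\rrbracket/(XY-ZW-p)$ at the unique worst orbit, smooth everywhere else) and the Dieudonn\'e-theoretic reading of $\Sigma$ --- the worst position corresponds exactly to $(\ker\lambda)[p]\cong\alpha_p\times\alpha_p$, which forces $a$-number $2$, hence $A$ superspecial, hence $\Sigma$ finite --- are both right, and you correctly identify the explicit lattice-chain elimination as the one nontrivial computation.

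One step should be flagged as not quite closing. For the $\pi_0$-statement you invoke ``$a$ and $b$ are bijective on geometric connected components,'' but the paper's Remark \ref{components} asserts this only for geometric \emph{generic} fibres, and it cannot be used verbatim on special fibres, which is where the claim lives. The correct bookkeeping is a sandwich. On one side, regularity and $\zz_p$-flatness of $A_K$ imply that each connected (hence irreducible) component of $A_K$ over the maximal unramified base has nonempty special fibre and geometrically connected generic fibre, giving a surjection $\pi_0(\apsp)\twoheadrightarrow\pi_0(\apgen)$. On the other side, $b$ is proper and dominant on every component, hence surjective, so $\pi_0(A_{Q,\overline{s}})\twoheadrightarrow\pi_0(\apsp)$; and Tilouine's description $A_{Q,\overline{s}}=X^{\text{e}}\cup X^{\text{m}}$, where $X^{\text{e}}$ and $X^{\text{m}}$ are glued along the nonempty surface $S$ over each component of $\ahsp$, combined with the classical $\pi_0$-result for $A_H$, gives $\lvert\pi_0(A_{Q,\overline{s}})\rvert=\lvert\pi_0(\apgen)\rvert$. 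The two inequalities force the surjection $\pi_0(\apsp)\twoheadrightarrow\pi_0(\apgen)$ to be a bijection.
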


\subsection{Geometry of Hecke correspondences}
The following two lemmata can be proven using the moduli descriptions and Section 4 of \cite{YuSuperSingular}, see also Section 6.3 of \cite{ShenYuZhang}.
\begin{Lem} \label{fibersofalemma}
Let $(A, \lambda, \eta)$ be an $\overline{\mathbb{F}}_p$ point of $\ahsp$, let $a^{-1}(A, \lambda,\eta)$ denote the scheme theoretic fiber. The underlying reduced scheme of $a^{-1}(A, \lambda, \eta)$ has: cardinality $2(p+1)$ if $A$ is ordinary, cardinality $3$ if $A$ has $p$-rank one, cardinality $1$ when $A$ is supersingular but not superspecial. If $A$ is superspecial then $a^{-1}(A, \lambda,\eta)$ is isomorphic to $\mathbb{P}^1$.
\end{Lem}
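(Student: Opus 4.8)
The plan is to identify the scheme-theoretic fiber $a^{-1}(A,\lambda,\eta)$ with the scheme parametrising rank-$p$ finite locally free subgroup schemes $H\subset A[p]$; this is immediate from the moduli descriptions of $F_Q$ and $F_H$ (and, as already noted, such an $H$ is automatically isotropic). The isomorphism type of the group scheme $A[p]/\overline{\mathbb{F}}_p$ depends only on the $p$-rank $f$ and the $a$-number $a(A)=\dim_{\overline{\mathbb{F}}_p}\Hom(\alpha_p,A[p])$ by Oort's classification of truncated Barsotti--Tate groups of level one (c.f. \cite{Pries}): the four cases in the statement are exactly $(f,a(A))\in\{(2,0),(1,1),(0,1),(0,2)\}$, with $A[p]$ isomorphic to $\mu_p^{2}\oplus(\mathbb{Z}/p)^{2}$, to $\mu_p\oplus\mathbb{Z}/p\oplus I_{1,1}$, to $I_{2,1}$, and to $I_{1,1}\oplus I_{1,1}$ respectively.

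For the three cases where the fiber is finite I would compute its set of $\overline{\mathbb{F}}_p$-points directly. Over an algebraically closed field of characteristic $p$ every group scheme of order $p$ is one of $\mathbb{Z}/p$, $\mu_p$, $\alpha_p$, so it suffices to count subgroups of each type. The copies of $\mathbb{Z}/p$ biject with the $\mathbb{F}_p$-lines in $A[p](\overline{\mathbb{F}}_p)\cong(\mathbb{Z}/p)^{f}$; since $\lambda$ is prime to $p$ the group scheme $A[p]$ is Cartier self-dual, so its multiplicative part also has order $p^{f}$ and the copies of $\mu_p$ likewise biject with the $\mathbb{F}_p$-lines in an $\mathbb{F}_p^{f}$; and the copies of $\alpha_p$ biject with the $\overline{\mathbb{F}}_p$-lines in $\Hom(\alpha_p,A[p])$, of dimension $a(A)$, using that $\alpha_p$ is simple with $\operatorname{Aut}(\alpha_p)=\mathbb{G}_m$. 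Feeding in the pairs $(f,a(A))$ gives $(p+1)+(p+1)+0=2(p+1)$ in the ordinary case, $1+1+1=3$ in the $p$-rank one case, and $0+0+1=1$ in the supersingular non-superspecial case; the inputs $\dim\Hom(\alpha_p,I_{1,1})=1$ and the vanishing of the cross-terms $\Hom(\mu_p,\mathbb{Z}/p)$, $\Hom(\alpha_p,\mu_p)$, $\Hom(\mu_p,I_{1,1})$, etc., are standard.

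For the superspecial case I would instead use covariant Dieudonné theory: rank-$p$ finite locally free subgroup schemes of $A[p]$ correspond to rank-one sub-$(F,V)$-modules of $M:=\mathbb{D}(A[p])$. Since $A$ is superspecial, $F$ and $V$ are nilpotent on $M$ with $\ker F=\ker V=\operatorname{im}F=\operatorname{im}V=:L$, a subbundle of rank $2$; hence every rank-one $(F,V)$-stable submodule is just a line subbundle of $L$, and there are no submodules of $\mu_p$- or $\mathbb{Z}/p$-type. As this description is compatible with base change, the fiber functor is represented by $\ps(L)\cong\ps^{1}$, so $a^{-1}(A,\lambda,\eta)\cong\ps^{1}$. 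The same bookkeeping reproves the finite counts: in those cases $L=\ker F\cap\ker V$ has rank $\le 1$, and the $\mu_p$- and $\mathbb{Z}/p$-type submodules contribute precisely the $V$- and $F$-stable — hence $\mathbb{F}_p$-rational — lines inside $\ker F$ and $\ker V$.

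The step I expect to be the \emph{main obstacle} is making the family form of the Dieudonné correspondence rigorous over the (possibly non-reduced, non-artinian) base on which the fiber functor lives, i.e. checking that over an $\overline{\mathbb{F}}_p$-algebra $R$ the rank-$p$ subgroup schemes of $A[p]\otimes R$ really biject with the line subbundles of $L\otimes R$. An alternative is to analyse the functor of rank-$p$ subgroup schemes of $A[p]$ by hand, using that over a superspecial base every such subgroup is a twisted form $\alpha_{\mathcal{L}}$ of $\alpha_p$, or to invoke the explicit description of the special fiber of $A_Q$ near superspecial points implicit in Theorem 3 of \cite{Tilouine}, where these $\ps^{1}$'s appear as the fibers of $a$ over the smooth surface $S$.
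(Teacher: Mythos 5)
Your proof is correct and, for the three finite fibers, amounts to the same counting argument the paper uses (decompose $A[p]$ via its $p$-rank and $a$-number, then enumerate the rank-$p$ subgroup schemes by their isomorphism type $\mathbb{Z}/p$, $\mu_p$, $\alpha_p$). The paper organises the count by simply listing the three subgroups in each case rather than by summing over isogeny types, but the content is identical.

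Where you genuinely diverge is the superspecial case. The paper's proof is a citation: it invokes Theorem 15.2 of Mumford together with the example following it to identify the $\overline{\mathbb{F}}_p$-points of the fiber with $\ps(\operatorname{Lie}(A))$, and then cites Section 4 of \cite{YuSuperSingular} for the assertion that the scheme-theoretic fiber is actually $\ps^1$. You instead argue intrinsically via Dieudonné theory: for a superspecial $A$ the module $M=\mathbb{D}(A[p])$ has $\ker F=\ker V=\operatorname{im}F=\operatorname{im}V=L$ of rank $2$, and any rank-one $(F,V)$-stable submodule is killed by both $F$ and $V$ (since $F^2=V^2=0$), hence is a line in $L$; this exhibits the fiber as $\ps(L)$. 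That computation is sound, and it has the virtue of explaining \emph{why} the answer is $\ps^1$ rather than outsourcing it, as well as giving a uniform picture that recovers the finite cases. The cost, which you correctly identify, is that upgrading this from a pointwise Dieudonné statement to a representability statement for the subgroup-scheme functor over an arbitrary $\overline{\mathbb{F}}_p$-algebra requires a crystalline/Dieudonné theory in families — which is exactly the technical work that Yu's cited Section 4 carries out (by an explicit Dieudonné-module/display computation). So the paper's proof and yours are essentially the same argument, with the paper choosing to cite the family Dieudonné computation rather than sketching it. Your fallback via Theorem 3 of \cite{Tilouine} is also legitimate: the $\ps^1$'s there are the fibers of $a$ over the surface $S$, which contains the superspecial points. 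In short: no gap, different bookkeeping of the same idea, and an honest flag of the one place where the work actually lives.
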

\begin{Lem}
Let $(A, \lambda, \eta)$ be an $\overline{\mathbb{F}}_p$ point of $\apsp$ and let $b^{-1}(A, \lambda,\eta)$ denote the scheme-theoretic fiber. We will describe the underlying reduced scheme of $b^{-1}(A, \lambda,\eta)$ for various $(A, \lambda,\eta)$. It consists of two points when $A$ has $p$-rank two, one point when $A$ has $p$-rank one and $\ker \lambda$ is étale-multiplicative, two points when $A$ has $p$-rank one and $\ker \lambda$ is local-local, and it consists of a single point when $A$ is supersingular but $(A, \lambda,\eta) \not \in \Sigma$ (recall that $\Sigma$ is the singular locus of $\apsp$). For $(A, \lambda,\eta) \in \Sigma$ the fiber $b^{-1}(A, \lambda, \eta)$ is isomorphic to $\mathbb{P}^1$. 
\label{fibersblemma}
\end{Lem}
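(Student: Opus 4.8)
The reduction needed is already in hand: by Proposition \ref{abelianvarietyfact} together with the discussion preceding the statement, the underlying reduced scheme of $b^{-1}(A,\lambda,\eta)$ is identified with the reduced scheme parametrising order-$p$ subgroup schemes $G$ of the finite flat $\overline{\mathbb{F}}_p$-group scheme $K:=(\ker\lambda^t)[p]$ (each such $G$ being automatically isotropic, a line in a symplectic plane); here $K$ has order $p^2$, is killed by $p$ and carries the perfect alternating pairing induced by $\lambda$. Since $b$ is proper, knowing this scheme determines the whole fiber, and the plan is simply to run through the possible isomorphism types of $K$ over $\overline{\mathbb{F}}_p$ and in each case describe its scheme of order-$p$ subgroups; this is the exact analogue, one level lower, of the superspecial case of Lemma \ref{fibersofalemma}.

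First I would recall the classification: a finite $\overline{\mathbb{F}}_p$-group scheme killed by $p$, of order $p^2$, and self-dual for a perfect alternating pairing is isomorphic to exactly one of $\mu_p\times\mathbb{Z}/p\mathbb{Z}$, the self-dual non-split local-local extension $I_{1,1}$, or $\alpha_p\times\alpha_p$ with a symplectic form. By the quoted theorem of Yu (Theorem 1.3 of \cite{YuParamodular}) the last possibility occurs precisely when $(A,\lambda)\in\Sigma$. The correlation of the remaining two types with the $p$-rank of $A$ then has to be read off from Yu's explicit local description of $\apsp$: when $A$ is supersingular and $(A,\lambda)\notin\Sigma$ the group $K$ is local-local of order $p^2$ and not $\alpha_p\times\alpha_p$, hence $K\cong I_{1,1}$; when $A$ has $p$-rank $\ge 1$ (so that $A[p]$ has a nontrivial multiplicative part) one gets $K\cong\mu_p\times\mathbb{Z}/p\mathbb{Z}$.

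Next I would count order-$p$ subgroups in each case. Over $\overline{\mathbb{F}}_p$ one has $\Hom(\alpha_p,\mu_p)=\Hom(\alpha_p,\mathbb{Z}/p\mathbb{Z})=0$ and $\Hom(\mathbb{Z}/p\mathbb{Z},\mu_p)=\mu_p(\overline{\mathbb{F}}_p)=0$, and dually, so $\mu_p\times\mathbb{Z}/p\mathbb{Z}$ has exactly the two order-$p$ subgroups $\mu_p$ and $\mathbb{Z}/p\mathbb{Z}$: two points when $A$ has $p$-rank $2$ or $1$. The group $I_{1,1}$ is local-local with $a$-number one, so $\dim\Hom(\alpha_p,I_{1,1})=1$ and it contains a unique copy of $\alpha_p$ and no $\mu_p$ or $\mathbb{Z}/p\mathbb{Z}$: a single point when $A$ is supersingular and $(A,\lambda)\notin\Sigma$. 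Finally, when $(A,\lambda)\in\Sigma$ and $K\cong\alpha_p\times\alpha_p$, every order-$p$ subgroup scheme is local-local of order $p$, hence isomorphic to $\alpha_p$, and using $\operatorname{End}(\alpha_p)=\overline{\mathbb{F}}_p$ these correspond to the lines in $\Hom(\alpha_p,\alpha_p\times\alpha_p)\cong\overline{\mathbb{F}}_p^{\,2}$; in fact the functor of order-$p$ subgroup schemes of $\alpha_p\times\alpha_p$ is represented by $\mathbb{P}^1$, whence $b^{-1}(A,\lambda,\eta)\cong\mathbb{P}^1$.

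I expect two steps to carry the real weight. The genuinely substantive one is the $\mathbb{P}^1$ case: passing from the bijection on $\overline{\mathbb{F}}_p$-points to an isomorphism of schemes $b^{-1}(A,\lambda,\eta)\cong\mathbb{P}^1$. The clean route is to represent the functor of order-$p$ subgroup schemes of $\alpha_p\times\alpha_p$ by $\mathbb{P}^1$ — the same ``$\mathbb{P}^1$ of lines in a two-dimensional space of Lie/Dieudonn\'e data'' computation that identifies the fiber of $a$ over a superspecial point with $\mathbb{P}^1$ in Lemma \ref{fibersofalemma} — or, equivalently, to recognise $b$ in a neighbourhood of a point of $\Sigma$ as a small resolution of the node $\overline{\mathbb{F}}_p\llbracket X,Y,Z,W\rrbracket/(XY-ZW)$, whose exceptional fiber is $\mathbb{P}^1$; for either I would lean on Yu's work (\S4 of \cite{YuSuperSingular} and \cite{YuParamodular}). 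The more delicate bookkeeping is to pin down, for $(A,\lambda,\eta)\in\apsp$ with $A$ of $p$-rank one, that $K$ really has the type $\mu_p\times\mathbb{Z}/p\mathbb{Z}$ giving two points: this uses the geometry of the paramodular moduli space encoded in Yu's description of $\apsp$, rather than merely the numerical condition on $\ker\lambda$.
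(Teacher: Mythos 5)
Your proposal is correct and follows essentially the same route as the paper's proof: reduce via Proposition \ref{abelianvarietyfact} to enumerating order-$p$ subgroup schemes of the rank-$p^2$ kernel case by case, and cite Yu (\S4 of \cite{YuSuperSingular}) for the scheme-theoretic identification of the $\alpha_p\times\alpha_p$ fiber with $\mathbb{P}^1$. The one small divergence is in the supersingular-not-superspecial case, where the paper bypasses identifying $(\ker\lambda)[p]$ up to isomorphism and instead notes directly that $\dim_{\mathbb{F}_p}\Hom(\alpha_p,A[p])=1$ for $A[p]\cong I_{2,1}$, so the unique $\alpha_p\subset A[p]$ automatically factors through $(\ker\lambda)[p]$, whereas you first pin down $K\cong I_{1,1}$ via self-duality; both arguments yield the same single point, and you correctly flag the delicate steps (the $\mathbb{P}^1$ scheme structure and matching $p$-rank to the type of $K$).
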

\begin{proof}
Let $(A, \lambda, \eta)$ as in the lemma. We want to find all principally polarised abelian varieties $(B, \mu)$ with a degree $p$ isogenies $\phi:B \to A$ such that $\ker \phi \subset B[p]$ has degree $p$ and such that the following diagram commutes
\begin{equation}
    \begin{tikzcd}
    A \arrow{r}{\lambda}  & A^t \arrow{d}{\phi^t} \\
    B \arrow{u}{\phi} \arrow{r}{p \mu}& B^t. 
    \end{tikzcd}
\end{equation}
There is a unique isogeny $\lambda^t:A^t \to A$ such that $\lambda^t \circ \lambda = [p]$ and the choice of $(B, \mu)$ is equivalent to the choice of degree $p$ subgroup of $\ker \lambda^t$. The lemma now follows from a case-by-case analysis; for the sake of brevity we only highlight two cases (c.f. Theorem 4.7 of \cites{YuSuperSingular} for the supersingular cases).  
\begin{itemize}
    \item If $A$ has $p$-rank one and $\ker \lambda$ is local-local then $\ker \lambda^t \simeq \mu_p \times Z/p\mathbb{Z}$ and there are two degree $p$ subgroups.
    
    \item If $A$ has $p$-rank one and $\ker \lambda$ is étale-multiplicative then $\ker \lambda^t \simeq E[p]$ (the $p$-torsion of a supersingular elliptic curve), and so there is only one degree $p$ subgroup.
\end{itemize}
\end{proof}
\subsection{Supersingular loci} \label{supersingularloci}

In this section we will give a description of the supersingular loci of $\aksp, \ahsp$ and $\apsp$, fol owing \cite{KudlaRapoport} and \cite{YuSuperSingular}. It is a classical result due to Katsura-Oort \cite{KatsuraOort} that all the irreducible components of the supersingular locus of the (coarse) moduli space of principally polarized abelian surfaces are isomorphic to $\mathbb{P}^1$. In Chapter 4 of \cite{KudlaRapoport}, it is shown that the supersingular locus of $\ahsp$ is a union of projective lines and that the irreducible components are in bijection with
\begin{align}
N=G(\qqq) \setminus G(\mathbb{A}_f)/ U^p K_2,
\end{align}
equivariant from the prime-to-$p$ Hecke operators. Our computation of the fibers of $a$ now tells us that the irreducible components of the supersingular locus $\aksp$ are parametrised by
\begin{align}
    N \coprod M,
\end{align}
where $M$ is the set of superspecial points in $\ahsp(\overline{\mathbb{F}}_p)$. We will later write
\begin{align}
  \aksp^{\text{ss}}=E \cup F \label{defofE}
\end{align}
where $E$ corresponds to $N$ and $F$ corresponds to $M$. In \cite{YuSuperSingular}, Yu proves the following result, which will be important for us in the proof of our main theorem.
\begin{Prop}(Theorem 1.2 of \cite{YuSuperSingular}) \label{YuThm}
The irreducible components of $E$ are pairwise disjoint, and so are the irreducible components of $F$. The map $b:\aksp \to \apsp$ contracts the irreducible components of $E$ to points, identifying the singular points of $\apsp$ with $N$. 
\end{Prop}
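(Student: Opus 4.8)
The plan is to push the Katsura--Oort--Kudla--Rapoport picture of the supersingular locus of $\ahsp$ through the Hecke correspondence $\ahsp\xleftarrow{a}\aksp\xrightarrow{b}\apsp$, using the fibre computations of Lemmas~\ref{fibersofalemma} and~\ref{fibersblemma} together with the explicit singularities of $\apsp$. First I would record, from Chapter~4 of \cite{KudlaRapoport}, that $\ahsp^{\text{ss}}$ is a union of $\mathbb{P}^1$'s $\mathbb{P}_j$ indexed by $j\in N$, meeting pairwise transversally precisely along the superspecial points, which are indexed by $M$, with exactly $p+1$ components through each superspecial point. Since by Lemma~\ref{fibersofalemma} the map $a$ is bijective away from the superspecial locus and has $\mathbb{P}^1$-fibres over it, the decomposition $\aksp^{\text{ss}}=E\cup F$ is recovered concretely: $E=\bigcup_{j\in N}E_j$ with $E_j:=\overline{a^{-1}(\mathbb{P}_j\setminus\{\text{superspecial points}\})}$ mapping homeomorphically onto $\mathbb{P}_j$, and $F=\bigcup_{L\in M}F_L$ with $F_L:=a^{-1}(P_L)\cong\mathbb{P}^1$ the fibre over the superspecial point $P_L$. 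The components of $F$ are then pairwise disjoint for free: if $P_L\neq P_{L'}$ then $F_L\cap F_{L'}\subseteq a^{-1}(P_L)\cap a^{-1}(P_{L'})=\emptyset$.

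Next I would analyse $b$ on $\aksp^{\text{ss}}$. An isogeny preserves supersingularity, so $b^{-1}(\apsp^{\text{ss}})=\aksp^{\text{ss}}$, and by Lemma~\ref{fibersblemma} the fibre of $b$ over a point of $\apsp^{\text{ss}}$ is a single reduced point away from $\Sigma$ and is isomorphic to $\mathbb{P}^1$ over each $x\in\Sigma$. For such $x$ the curve $b^{-1}(x)$ is an irreducible one-dimensional closed subset of the one-dimensional scheme $\aksp^{\text{ss}}$, hence an irreducible component; and it is not one of the $F_L$, because $b|_{F_L}$ sends $(A_0,\lambda_0,\eta_0,H)$ to $(A_0/H,\mu_H,\eta_0)$, which varies nontrivially with $H$ (the polarised quotient is not constant along the $\mathbb{P}^1$ of subgroups $H$, by a Moret--Bailly-type argument), so no $F_L$ is contracted by $b$. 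Therefore $b^{-1}(x)=E_{j(x)}$ for a unique $j(x)\in N$, and $x\mapsto j(x)$ is injective since preimages of distinct points are disjoint.

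The remaining point---and the one I expect to be the main obstacle---is to show that $b$ contracts \emph{every} $E_j$, which then upgrades $x\mapsto j(x)$ to a bijection $\Sigma\xrightarrow{\sim}N$ and finishes the identification. This is the arithmetic heart of the statement and is exactly the Dieudonné-module computation of Section~4 of \cite{YuSuperSingular}: along the Moret--Bailly family $\mathbb{P}_j$, the quotient of the universal abelian surface by its canonical $\alpha_p$-subgroup, with the induced polarisation, is a \emph{fixed} polarised surface whose polarisation kernel has $p$-torsion isomorphic to $\alpha_p\times\alpha_p$, hence a point of $\Sigma$ by the description of the singularities of $\apsp$; so $b$ is constant on $E_j$. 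One can organise the same fact geometrically: $A_Q$ is regular, $b$ is proper, and $\apsp$ has ordinary double points exactly along $\Sigma$, so near $\Sigma$ the morphism $b$ realises $\apsp$ as a resolution of these singularities whose exceptional curves over the nodes are the $E_j$; matching each node with a single component of $E$ is then the Bruhat--Tits combinatorics of \cite{KudlaRapoport}, where the roles of the vertices of the first and second kind are interchanged between $\ahsp$ and $\apsp$.

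Granting the contraction statement, the disjointness of the components of $E$ is formal: if $E_j\cap E_{j'}\neq\emptyset$ with $j\neq j'$, then writing $x_j\in\Sigma$ for the point to which $b$ contracts $E_j$ we would have $b^{-1}(x_j)\cap b^{-1}(x_{j'})\neq\emptyset$, hence $x_j=x_{j'}$; but then $E_j$ and $E_{j'}$ are two distinct one-dimensional irreducible closed subsets of the irreducible curve $b^{-1}(x_j)\cong\mathbb{P}^1$, which is impossible. Thus the whole proposition reduces to the local computation of \cite{YuSuperSingular} at the superspecial points, with everything else being bookkeeping with Lemmas~\ref{fibersofalemma} and~\ref{fibersblemma} and the structure of $\ahsp^{\text{ss}}$.
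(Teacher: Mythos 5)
Your argument is correct and rests on the same essential input as the paper's: the paper's proof is just the observation that the proof of Theorem 1.2 in \cite{YuSuperSingular} is insensitive to the choice of neat level $U^p$, and you likewise defer the one genuinely arithmetic step (that $b$ contracts every $E_j$, via the Dieudonn\'e/Moret--Bailly computation) to Yu's Section 4. The surrounding bookkeeping you supply — strict transforms along $a$, using the fibre lemmas to see that $b^{-1}(x)\cong\mathbb{P}^1$ is a component not equal to any $F_L$, and the formal deduction of disjointness — is a correct and more explicit unpacking of what the citation to Yu is doing.
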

\begin{Rem}
The bijection $\Sigma \simeq N$ can also be proven directly, using the fact that all superspecial abelian surfaces are isomorphic; then $\Sigma$ corresponds to choices of a polarisation $\lambda$ and a level structure $\eta$ on a fixed superspecial abelian surface $A$.
\end{Rem}
\section{The Picard-Lefschetz formula} \label{nearbycycles}
In this section we discuss the Picard-Lefschetz formula for $A_K$, following \cite{SGA7} Expos\'e XV and using results of \cite{LanStroh}. Let $\ell \not=p$, let $\mathcal{O}$ denote $\mathbb{Z}_{\ell}$ and let $\mathbb{V}$ be an automorphic local system of $\mathcal{O}$-modules on $A_K$, and recall that there is a distinguished triangle on the special fiber $\apsp$
\begin{align} \label{fundamentaltriangle}
    \mathbb{V} \to R \Psi \mathbb{V} \to R \Phi \mathbb{V}
\end{align}
where $R \Psi \mathbb{V}$ is the complex of nearby cycles and $R \Psi \mathbb{V}$ is the complex of vanishing cycles. It follows from Corollary 4.6 of \cite{LanStroh} that 
\begin{align} \label{NearbyCycles}
    H^{\bullet}(X_{\overline{s}}, R \Psi \mathbb{V})&=H^{\bullet}(X_{\overline{\eta}}, \mathbb{V}) \\
    H^{\bullet}_c(X_{\overline{s}}, R \Psi \mathbb{V})&=H^{\bullet}_c(X_{\overline{\eta}}, \mathbb{V}).
\end{align}
\subsection{The exact sequence}
Proposition 3.1.2. of Expos\'e XV of \cite{SGA7} then tells us that the sheaves $R^i \Phi \mathbb{V}$ are zero for $i \not=3$ and supported on the singular locus $\Sigma$ for $i=3$. This means in particular that 
\begin{align}
    R^3 \Phi \mathbb{V} = \bigoplus_{x \in \Sigma} i_{x, \ast}\left(R^3 \Phi \mathbb{V} \right)_x
\end{align}
and so the cohomology of $R^3 \Phi \mathbb{V}$ is concentrated in degree zero where we find
\begin{align}
    H^3_c(\apsp, R \Phi \mathbb{V})=H^3(\apsp, R \Phi \mathbb{V})= H^0_c(\apsp, R^3 \Phi \mathbb{V}) = H^0(\apsp, R^3 \Phi \mathbb{V}) = \bigoplus_{x \in \Sigma} \left(R^3 \Phi \mathbb{V}\right)_x.
\end{align}
It follows from \eqref{NearbyCycles} and \eqref{fundamentaltriangle}) that
\begin{align}
    H^i_c(\apsp, \mathbb{V}) & = H^i_c(\apgen, \mathbb{V}) \\
    H^i(\apsp, \mathbb{V}) & = H^i(\apgen, \mathbb{V})
\end{align}
for $i \not=3,4$ and also that we get the following diagram of exact sequences
\begin{equation} 
    \begin{tikzcd}[column sep=small]
        0 \arrow{r} & H^3_c(\apsp, \mathbb{V}) \arrow{r} \arrow{d} &  H^3_c(\apgen, \mathbb{V}) \arrow{r}{\alpha} \arrow{d} & H^3_c(\apsp, R \Phi \mathbb{V}) \arrow{r} \arrow[d, equals]& H^4_c(\apsp, \mathbb{V}) \arrow{r} \arrow{d} &  H^4_c(\apgen, \mathbb{V}) \arrow{r} \arrow{d} & 0 \\
        0 \arrow{r} & H^3(\apsp, \mathbb{V}) \arrow{r} &  H^3(\apgen, \mathbb{V}) \arrow{r}{\alpha} & H^3(\apsp, R \Phi \mathbb{V}) \arrow{r} & H^4(\apsp, \mathbb{V}) \arrow{r} &  H^4(\apgen, \mathbb{V}) \arrow{r} & 0.
    \end{tikzcd} \label{double-exact}
\end{equation}

\subsection{The action of inertia}
Let $\mathbb{Z}_{\ell}(1)$ with generator $\sigma$ be the maximal pro-$\ell$ quotient of the inertia group $I_p$ of $G_{\qqq_p}$, then the general theory tells us that the action of the inertia group factors through $\mathbb{Z}_{\ell}(1)$ and moreover can be computed as follows (c.f. page 33 of Expos\'e XIII of \cite{SGA7}, note that the variation map goes from ordinary cohomology to compactly supported cohomology)
\begin{equation}
    \begin{tikzcd}
        H^3(\apgen, \mathbb{V}) \arrow{r}{\alpha} \arrow{d}{\sigma-1} & \displaystyle \bigoplus_{x \in \Sigma} \left(R^3 \Phi \mathbb{V} \right)_x \arrow{d}{\oplus \operatorname{Var}_x(\sigma)} \\
        H^3_c(\apgen, \mathbb{V})  & \displaystyle \bigoplus_{x \in \Sigma} H^3_x(\apsp, R \Psi \mathbb{V}) \arrow{l}{\beta}
    \end{tikzcd}
\end{equation}
Here $\operatorname{Var}_x(\sigma)$ is the local variation map, which is an isomorphism by 3.3.5 of Exposé XV of \cites{SGA7}, and $H_x(\apsp, -)$ denotes cohomology with support in the closed subscheme $\{x\}$. The map $\sigma-1$ is a priori not equivariant for the action of Frobenius, but we can fix this by defining a monodromy operator $N$ as the logarithm of $\sigma$. Since $(\sigma-1)^2=0$ as an endomorphism of $H^3_{!}(\apgen, \mathbb{V})$, we have $N=\sigma-1$ and so the following diagram is $\frob_p$-equivariant:
\begin{equation}
    \begin{tikzcd}
        H^3_{!}(\apgen, \mathbb{V}) \arrow{r}{\alpha}(1) \arrow{d}{N} & \displaystyle  \bigoplus_{x \in \Sigma} \left(R^3 \Phi \mathbb{V} \right)_x(1) \arrow{d}{\oplus N_x} \\
        H^3_!(\apgen, \mathbb{V})  & \displaystyle  \bigoplus_{x \in \Sigma} H^3_x(\apsp, R \Psi \mathbb{V}) \arrow{l}{\beta}.
    \end{tikzcd}
\end{equation}
Additionally, the general theory tells us that $\beta$ is the Poincare dual of $\alpha$ up to a Tate twist, since $\mathbb{V}$ is self dual up to a Tate twist. Furthermore, the natural map (induced by \eqref{fundamentaltriangle})
\begin{align}
H^3_x(\apsp, \mathbb{V}) \to H^3_x(\apsp, R \Psi \mathbb{V})
\end{align}
is an isomorphism by 2.2.5.8 of Exposé XV of \cites{SGA7}. Taking the direct sum over $x \in \Sigma$ we get an isomorphism
\begin{align}
H^3_{\Sigma}(\apsp, \mathbb{V}) \cong H^3_x(\apsp, R \Psi \mathbb{V})
\end{align}
and under this identification $\beta$ factors as
\begin{equation}
\begin{tikzcd}
H^3_{\Sigma}(\apsp, \mathbb{V}) \arrow{r} \arrow[drr, bend right=40, "\beta", below] \arrow{dr} & H^3_c(\apsp, \mathbb{V}) \arrow{d} \arrow{r} & H^3_c(\apgen, \mathbb{V}) \arrow{d} \\
& H^3(\apsp, \mathbb{V}) \arrow{r} & H^3(\apgen, \mathbb{V})
\end{tikzcd}
\end{equation}
since cohomology with support in a proper subscheme factors through compactly supported cohomology. 
\subsection{The weight filtration}
Now let $\mathbb{V}$ be an automorphic local system of $L=\mathbb{Q}_{\ell}$-vector spaces of pure weight $k$. We will show that the weight filtration on $H^3_!(\apgen, \mathbb{V})$ has nonzero graded pieces only of weights $k+2, k+3, k+4$ by explicitly writing down the weight filtration. Here the weight filtration is defined as usual in terms of the action of $\frob_p$ on $H^3_{!}(\apgen, \mathbb{V})$. 
\begin{Lem} \label{weightmonodromy}
The weight filtration on $H^3_{!}(\apgen, \mathbb{V})$ is given by
\begin{align}
    \im \beta \subset H^3_{!}(\apsp, \mathbb{V}) \subset H^3_{!}(\apgen, \mathbb{V}).
\end{align}
\end{Lem}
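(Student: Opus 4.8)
\section*{Proof proposal}

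The plan is to check directly that the three-step filtration
\[
0\subseteq \im\beta\subseteq H^3_!(\apsp,\mathbb{V})\subseteq H^3_!(\apgen,\mathbb{V})=:M
\]
has associated graded pieces which are pure of strictly increasing weights $k+2$, $k+3$, $k+4$. Since the weight filtration is the unique increasing filtration of $M$ by $\frob_p$-stable submodules whose graded pieces are pure of increasing weights, this identifies it, and at the same time proves that the only weights occurring are $k+2,k+3,k+4$.

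The geometric input is the structure of the vanishing cycles at the singular points. At $x\in\Sigma$ the complete local ring of $A_K$ realises the ordinary quadratic singularity $XY=ZW$ of relative dimension three, whose geometric Milnor fibre is the smooth affine quadric threefold; hence by \cite{SGA7} Expos\'e XV one has $(R^3\Phi\mathbb{V})_x\cong\mathbb{V}_x(-2)$, pure of weight $k+4$, and dually $H^3_x(\apsp,R\Psi\mathbb{V})\cong\mathbb{V}_x(-1)$, pure of weight $k+2$. Consequently $\bigoplus_{x\in\Sigma}(R^3\Phi\mathbb{V})_x$ is pure of weight $k+4$, and since $\alpha$ is $\frob_p$-equivariant so is its image $\im\alpha$.

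Next I identify the graded pieces using \eqref{double-exact}. Let $\alpha'\colon H^3(\apgen,\mathbb{V})\to\bigoplus_x(R^3\Phi\mathbb{V})_x$ denote the ordinary-cohomology incarnation of $\alpha$; the bottom row exhibits $\ker\alpha'=H^3(\apsp,\mathbb{V})$ as a submodule of $H^3(\apgen,\mathbb{V})$, the top row gives $\ker\alpha=H^3_c(\apsp,\mathbb{V})$, and $\alpha=\alpha'\circ v$ where $v\colon H^3_c(\apgen,\mathbb{V})\to H^3(\apgen,\mathbb{V})$ has image $M$. A short diagram chase then yields $M\cap H^3(\apsp,\mathbb{V})=H^3_!(\apsp,\mathbb{V})$ and an isomorphism $M/H^3_!(\apsp,\mathbb{V})\cong\im\alpha$, so the top graded piece is pure of weight $k+4$. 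For the middle piece I pass to the smooth open $U:=\apsp\setminus\Sigma$: since $\Sigma$ is zero-dimensional the localisation sequence gives $H^3_c(\apsp,\mathbb{V})\cong H^3_c(U,\mathbb{V})$, whence restriction induces a surjection $H^3_!(\apsp,\mathbb{V})\twoheadrightarrow H^3_!(U,\mathbb{V})$ whose kernel is the image of $H^3_\Sigma(\apsp,\mathbb{V})$ in $H^3(\apsp,\mathbb{V})$; by the factorisation of $\beta$ recorded above this kernel is exactly $\im\beta$. Thus the middle graded piece is $H^3_!(U,\mathbb{V})$, which is pure of weight $k+3$ because $U$ is smooth: Weil II bounds $H^3_c(U,\mathbb{V})$ in weights $\le k+3$, and Poincar\'e duality bounds $H^3(U,\mathbb{V})$ in weights $\ge k+3$. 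Finally $\im\beta$ is pure of weight $k+2$: either directly from $H^3_x(\apsp,R\Psi\mathbb{V})\cong\mathbb{V}_x(-1)$ and $\frob_p$-equivariance of $\beta$, or — as a cross-check — from the fact that $\beta$ is the Poincar\'e dual of $\alpha$ up to a Tate twist, which gives $(\im\beta)^{\perp}=H^3_!(\apsp,\mathbb{V})$ inside $M$, so $\im\beta$ is Poincar\'e dual to $M/H^3_!(\apsp,\mathbb{V})\cong\im\alpha$ and $M$ being self-dual up to a Tate twist forces the weight to be $k+2$.

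I expect the delicate point to be the second step: pinning down that $(R^3\Phi\mathbb{V})_x$ has weight $k+4$ rather than $k+2$, and keeping precise track of the Tate twists relating $\alpha$, the variation maps $\operatorname{Var}_x$ and $\beta$, since this is exactly what makes the filtration increasing and hence equal to the weight filtration. This rests on the local analysis of the quadratic singularity $XY=ZW$ and careful bookkeeping in the Picard--Lefschetz formalism of \cite{SGA7} Expos\'e XV; granting that, the remainder is the diagram chase in \eqref{double-exact} together with Weil II for the smooth variety $U$.
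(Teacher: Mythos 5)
Your proposal is correct and follows essentially the same route as the paper: compute $R\Phi\mathbb{V}$ at the ordinary quadratic points via SGA7 Exp.~XV to pin down the Tate twists, read off that $\im\alpha$ is pure of weight $k+4$ and $\im\beta$ pure of weight $k+2$, and handle the middle graded piece by passing to the smooth open $U=\apsp\setminus\Sigma$ and invoking Weil II together with Poincar\'e duality on $U$. The only minor difference is one of precision, to your credit: you identify the middle graded piece exactly as $H^3_!(U,\mathbb{V})$ (pure of weight $k+3$ by smoothness), whereas the paper only sandwiches it — weight $\le k+3$ by Weil II for $H^3_!(\apsp,\mathbb{V})$, and weight $\ge k+3$ because it embeds in $H^3(U,\mathbb{V})$ — which is logically sufficient but less explicit. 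Your framing via the uniqueness characterisation of the weight filtration (unique $\frob_p$-stable filtration with pure graded pieces of strictly increasing weight) makes the end of the argument cleaner but is implicit in the paper as well. The "delicate point" you flag — the Tate twist in $(R^3\Phi\mathbb{V})_x\cong\mathbb{V}_x(-2)$ and $H^3_x(\apsp,R\Psi\mathbb{V})\cong\mathbb{V}_x(-1)$ — is handled the same way in the paper, by citing the explicit computation in SGA7 Exp.~XV near 2.2.5.7.
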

\begin{proof}
The quotient $H^3_{!}(\apgen, \mathbb{V})/H^3_{!}(\apsp, \mathbb{V})$ is a subspace of $H^3(\apsp, R \Phi \mathbb{V})$ by \eqref{double-exact}. The derived projection formula tells us that (with tensor products derived)
\begin{align}
    R \Psi \mathbb{V} = \mathbb{V} \otimes_{L} R \Psi L,
\end{align}
where $L/\qqq_{\ell}$ is our coefficient field. This implies that $R \Phi \mathbb{V} = \mathbb{V} \otimes_L R \Phi L$ and the latter is given by
\begin{align}
    R \Phi L \cong \bigoplus_{x \in \Sigma} i_{x, \ast} L(-2)[-3],
\end{align}
by the discussion preceding 2.2.5.7 of Expos\'e XV of \cite{SGA7}(m=1 in our case). This shows that
\begin{align}
    R \Phi \mathbb{V} = \bigoplus_{x \in \Sigma} i_{x, \ast} \mathbb{V}_x(-2)[-3],
\end{align}
and so  $H^3(\apsp, R \Phi \mathbb{V})=\bigoplus_{x \in \Sigma} \mathbb{V}_x(-2)$ is pure of weight $k+4$. We deduce from this that $\im \beta$ is pure of weight $k+2$ using the isomorphism
\begin{align}
    H^3(\apsp, R \Phi \mathbb{V})(1) \cong H^3_{\Sigma}(\apsp, \mathbb{V}),
\end{align}
coming from the local monodromy maps. We are left to show that $H^3_{!}(\apsp, \mathbb{V})/\im \beta$ is pure of weight $k+3$. We know it has weight at most $k+3$ by Theorem 3.3.1 of \cite{WeilII}, so it suffices to show the weight is at least $k+3$. Let $U=\apsp \setminus \Sigma$ be the smooth locus of $\apsp$, then there is a long exact sequence
\begin{equation}
    \begin{tikzcd}[row sep=small]
    &\cdots \arrow{r} & H^3_{\Sigma}(\apsp, \mathbb{V}) \arrow{r}{\beta} & H^3(\apsp, \mathbb{V}) \arrow{r} & H^3(U, \mathbb{V}) \arrow{r} & \cdots
    \end{tikzcd}
\end{equation}
This implies that $H^3_{!}(\apsp, \mathbb{V})/\im \beta$ is contained in $H^3(U, \mathbb{V})$, which has weight at least $k+3$ by smoothness of $U$.
\end{proof}

\subsection{The cokernel of monodromy}
Now let $\mathbb{V}=\mathbb{V}_{\mathcal{O}}$ be an automorphic local system of $\mathcal{O}$-modules, then we can compute the monodromy operator using the following diagram :
\begin{equation} \label{componentgroupdef}
    \begin{tikzcd}
        H^3_!(\apgen, \mathbb{V})(1) \arrow[r,twoheadrightarrow, "\alpha"] \arrow{d}{N} & \im \alpha \arrow[r, hook] \arrow{d}{\gamma} & \displaystyle  \bigoplus_{x \in \Sigma} \left(R^3 \Phi \mathbb{V} \right)(1)_x \arrow{d}{\oplus N_x} \\
        H^3_!(\apgen, \mathbb{V})  & \im \beta \arrow[l, hook] &\displaystyle \bigoplus_{x \in \Sigma} H^3_x(\apsp, R \Psi \mathbb{V}) \arrow[l, twoheadrightarrow, "\beta"].
    \end{tikzcd}
\end{equation}
\begin{Def}
Let $\Theta$ be the cokernel of $\gamma$ (which depends on the weight, the level and $\ell$), we call it the \emph{component group}.
\end{Def}
If we invert $\ell$, then $\operatorname{Im} \alpha$ is the top graded piece of the monodromy filtration and $\operatorname{Im} \beta$ is the bottom graded piece. The weight monodromy conjecture predicts that $\gamma$ is an isomorphism after inverting $\ell$, i.e., that $\Theta$ is torsion. We call $\Theta$ the component group because it is a direct generalisation of the component group of the Jacobian of a modular curve, in its cohomological incarnation (c.f. \cite{Rajaei}). We will show that $\Theta=0$ under the assumptions of Theorem \ref{vanishingA}. The same proof will show that $\gamma$ is an isomorphism after inverting $\ell$, under the assumption that the weight is sufficiently regular (now using Corollary \ref{rationalvanishing}). Showing that $\gamma$ is an isomorphism after inverting $\ell$ in singular weight requires automorphic input.
\section{The component group vanishes} \label{Eisenstein}
\subsection{Statement of the main result}
In this section we will prove that the component group vanishes under the assumptions of Theorem \ref{vanishingA}, it will be useful to give these assumptions a label.
\begin{Assump} \label{assump:one}
Assume that $a > b > 0$, that $U_{\ell}$ is hyperspecial and that $\ell>a+b+4$.
\end{Assump}
\begin{Prop} \label{componentgroupiseisenstein}
If Assumption \ref{assump:one} holds then
\begin{align}
    \Theta=0.
\end{align}
\end{Prop}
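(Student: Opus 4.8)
The plan is to show that the component group $\Theta$, defined as the cokernel of the map $\gamma: \im\alpha \to \im\beta$ in diagram \eqref{componentgroupdef}, vanishes by exploiting the torsion-vanishing results of Theorem \ref{vanishingA} together with the explicit description of the supersingular locus from Section \ref{supersingularloci}. First I would unwind the definition: since the local monodromy maps $N_x$ are isomorphisms on each summand $\left(R^3\Phi\mathbb{V}\right)_x \to H^3_x(\apsp, R\Psi\mathbb{V})$, the cokernel of $\gamma$ is controlled by the failure of the composite $H^3_\Sigma(\apsp,\mathbb{V}) \twoheadrightarrow \im\beta$ to be surjected onto by $\im\alpha \subset \bigoplus_x (R^3\Phi\mathbb{V})_x$. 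Using the factorisation of $\beta$ through $H^3_c(\apsp,\mathbb{V})$ and $H^3(\apsp,\mathbb{V})$ displayed just before Section \ref{weightmonodromy}, I would reduce the statement to a computation about $H^3_\Sigma(\apsp,\mathbb{V}_{\mathcal{O}})$ and the map to $H^3(\apsp,\mathbb{V}_{\mathcal{O}})$, i.e. to showing that the cokernel is already killed by the image of $\alpha$.

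Second, I would bring in the geometry. The supersingular locus $\aksp^{\text{ss}} = E \cup F$ has irreducible components in bijection with $N \coprod M$, the components of $E$ being contracted by $b$ to the singular points $\Sigma \cong N$, and by the Proposition from \cite{YuSuperSingular} the components of $E$ are pairwise disjoint $\mathbb{P}^1$'s (and likewise for $F$). I would use the resolution $b: \aksp \to \apsp$ and the Picard--Lefschetz picture: the group $H^3_x(\apsp, R\Psi\mathbb{V})$ at a node $x$ is built from the cohomology of the exceptional $\mathbb{P}^1_x$, and the map $\gamma$ is essentially the intersection pairing on the exceptional divisors twisted by $\mathbb{V}$. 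Here the key input is that the exceptional curves are disjoint, so this ``intersection matrix'' is (a twist of) the Euler characteristic of a disjoint union of $\mathbb{P}^1$'s, which should make $\gamma$ visibly an isomorphism \emph{provided} the relevant $H^\bullet$ of $\aksp$ and $\apsp$ have no torsion obstructing it. This is exactly where Assumption \ref{assump:one} enters: with $a>b>2$, $U_\ell$ hyperspecial and $\ell > a+b+4$, Theorem \ref{vanishingA} guarantees that $H^3_!(Y_{U,\overline{\qqq}},\mathbb{V}_{\mathcal{O}})$ is free and that the degeneracy maps in the long exact sequences (the analogues over $\zz_p$, via the nearby-cycles comparison of Proposition \ref{NearbyCycles}) behave well, so the diagram \eqref{componentgroupdef} stays exact after reduction mod $\varpi$.

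Concretely the steps would be: (1) establish the integral analogue of Lemma \ref{weightmonodromy}, identifying $\im\beta$ and $H^3_!(\apsp,\mathbb{V}_{\mathcal{O}})$ as $\mathcal{O}$-submodules of $H^3_!(\apgen,\mathbb{V}_{\mathcal{O}})$, using Theorem \ref{vanishingA} to see these are free and that the relevant subquotients are free; (2) compute $H^3_\Sigma(\apsp,\mathbb{V}_{\mathcal{O}}) = \bigoplus_{x\in\Sigma} H^3_x(\apsp,\mathbb{V}_{\mathcal{O}})$ via the local structure $XY = ZW$ of the completed local ring at each node, getting each local term as $\mathbb{V}_x$ up to a Tate twist and shift; (3) compute $\im\alpha$, i.e. the image of the cospecialisation $H^3_c(\apgen,\mathbb{V}_{\mathcal{O}}) \to \bigoplus_x (R^3\Phi\mathbb{V})_x$, and show via the vanishing statements $H^i_c(Y_{U},\mathbb{V}_{\mathcal{O}}) = 0$ for $i>d$ and $H^i(Y_U,\mathbb{V}_{\mathcal{O}})=0$ for $i<d$ in Theorem \ref{vanishingA} that $\alpha$ is surjective onto the relevant piece; (4) conclude that $\gamma$ is surjective, hence $\Theta = 0$. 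The main obstacle I anticipate is step (3)--(4): controlling the image of $\alpha$ integrally and proving surjectivity of $\gamma$ requires knowing that the reduction mod $\varpi$ of the bottom exact sequence in \eqref{double-exact} stays exact, which in turn rests delicately on the freeness and surjectivity assertions in Theorem \ref{vanishingA} — precisely the part forcing the hypotheses $a>b>2$ and $\ell>a+b+4$. In other words, the geometry makes $\gamma$ an isomorphism rationally almost for free (disjoint exceptional $\mathbb{P}^1$'s), but upgrading this to an \emph{integral} isomorphism with no cokernel is the real content, and it is here that the Lan--Suh torsion-vanishing theorem does all the work.
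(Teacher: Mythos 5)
Your proposal correctly identifies the shape of the argument — show $\alpha$ is surjective, note the local maps $N_x$ are isomorphisms, conclude $\gamma$ is surjective — and correctly recognizes that the Lan--Suh vanishing theorem is the engine. But step (3) contains a genuine gap: you claim that the vanishing $H^i_c(Y_{U},\mathbb{V}_{\mathcal{O}})=0$ for $i>3$ from Theorem \ref{vanishingA}, applied to $\apgen$, shows that $\alpha$ is surjective. This does not follow. From the exact sequence \eqref{double-exact}, $\coker\alpha$ is the image of the connecting map $\zeta: H^3(\apsp, R\Phi\mathbb{V}) \to H^4_c(\apsp,\mathbb{V})$, and the generic-fiber vanishing $H^4_c(\apgen,\mathbb{V})=0$ only tells you that $\coker\alpha = H^4_c(\apsp,\mathbb{V})$. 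Since $A_K$ is \emph{not} smooth over $\zz_p$ (it has ordinary double points at $\Sigma$), the special-fiber group $H^4_c(\apsp,\mathbb{V})$ is not identified with $H^4_c(\apgen,\mathbb{V})$ — indeed, that discrepancy is exactly what $R\Phi\mathbb{V}$ measures — so nothing so far forces it to vanish.

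What the paper's proof does, and what your proposal is missing, is the geometric transfer to the smooth hyperspecial model $A_H$: Lemma \ref{GeometricVanishingCycles} identifies $\zeta$ with the map $H^4_E(X^{\text{e}},\mathbb{V}) \to H^4_c(X^{\text{e}},\mathbb{V})$ on the smooth component $X^{\text{e}}$ of the Klingen special fiber, and Lemma \ref{technicallemma} then factors this through the cycle-class map $H^4_Z(\ahsp,\mathbb{V}) \to H^4_c(\ahsp,\mathbb{V})$ on $A_H$. It is only on $A_H$, which is \emph{smooth} over $\zz_p$, that the nearby-cycles comparison degenerates to give $H^4_c(\ahsp,\mathbb{V}) = H^4_c(\ahgen,\mathbb{V})$, and only there does the Lan--Suh vanishing win. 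This two-step factorization (from $A_K$ up to $A_Q$ via the exceptional $\mathbb{P}^1$'s over $\Sigma$, then down to $A_H$) is the technical heart of the proof, and your proposal does not supply it. Relatedly, your intuition that disjointness of the exceptional $\mathbb{P}^1$'s makes the ``intersection matrix'' of $\gamma$ visibly invertible is conflating the local and global parts of the problem: disjointness (or really, the $XY=ZW$ local structure) already gives that the local variation maps $N_x$ are isomorphisms — that part is free — but says nothing about whether $\alpha$ hits all of $\bigoplus_x (R^3\Phi\mathbb{V})_x$, which is a genuinely global assertion and is exactly what requires the descent to $A_H$.

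Two minor points: your step (1) (an integral analogue of Lemma \ref{weightmonodromy}) is not actually used in the paper's proof of this proposition, and $\beta$-injectivity is a consequence of $\alpha$-surjectivity by Poincar\'e duality rather than something you need to establish separately to get $\coker\gamma = 0$ — once $\im\alpha = \bigoplus_x(R^3\Phi\mathbb{V})_x(1)$, the composite $\beta\circ(\oplus N_x)$ automatically surjects onto $\im\beta$.
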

The strategy of the proof is to show that the map
\begin{align} \label{mapmap}
    \alpha:H^3_{c}(\apgen, \mathbb{V}) \to H^3_c(\apsp, R \Phi \mathbb{V})
\end{align}
is surjective, which then shows that $\beta$ is injective by duality. Proving this implies the same statement on inner cohomology, because the map from compactly supported cohomology to ordinary cohomology is self dual and because $\alpha$ factors through inner cohomology. The cokernel of \eqref{mapmap} is given by the image of the map
\begin{align}
    H^3_c(\apsp, R \Phi \mathbb{V}) \to H^4_c(\apsp, \mathbb{V}), \label{cokernel}
\end{align}
which has a geometric interpretation in terms of the cycle classes of irreducible components of the supersingular locus of $\ahsp$. To be precise we will identify the image of \eqref{cokernel} with the image of the map
\begin{align}
    H^4_{Z}(\ahsp, \mathbb{V}) \to H^4_c(\ahsp, \mathbb{V}),
\end{align}
where $Z$ is the supersingular locus of $\ahsp$. Now since $\ahsp$ is smooth we know that
\begin{align}
    H^4_c(\ahsp, \mathbb{V}) &= H^4_c(\ahgen, \mathbb{V})
\end{align}
and we can now apply our vanishing theorems to the latter. The proof proceeds in two steps, which we formulate as two lemmas: We first relate the image of \eqref{cokernel} to the cycle class of the supersingular locus on $A_Q$. More precisely we will relate it to the image of the cycle class map $H^0(E, \mathbb{V})(-2) \to H^4(X^e, \mathbb{V})$, where we recall that $\aksp^{\text{ss}}=E \cup F \subset X^{\text{e}}$. Since $E$ and $X^e$ are smooth we can instead rewrite this using cohomology with support in $E$, which will appear in the following lemma.
\begin{Lem}
\label{GeometricVanishingCycles}
There is an isomorphism making the following diagram commute
\begin{equation}
	\begin{tikzcd}
		H^4_E(X^{\text{e}}, \mathbb{V}) \arrow{r} & H^4_c(X^{\text{e}}, \mathbb{V}) \\
		H^3(\apsp, R \Phi \mathbb{V}) \arrow[u, densely dotted] \arrow{r} & H^4_c(\apsp, \mathbb{V}) \arrow{u}{j}
	\end{tikzcd}
\end{equation}
and moreover the natural map $j$ is an isomorphism.
\end{Lem}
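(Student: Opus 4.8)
Write $f := b|_{X^{\text{e}}}\colon X^{\text{e}} \to \apsp$ and $W := f^{\ast}\mathbb{V} = \mathbb{V}|_{X^{\text{e}}}$. The starting point is to record, using Lemma~\ref{fibersblemma}, Theorem~1.2 of \cite{YuSuperSingular}, and the fact that $\apsp$ is normal and regular with ordinary double points exactly at $\Sigma$ while $X^{\text{e}}$ is smooth, that $f$ is proper, restricts to an isomorphism over $U := \apsp \setminus \Sigma$, and has $f^{-1}(x) = E_x \cong \mathbb{P}^{1}$ for each $x \in \Sigma$, with $E = \coprod_{x \in \Sigma} E_x$; that is, $f$ is a small resolution of the nodes of $\apsp$. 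In particular $E$ is a disjoint union of smooth rational curves of pure codimension $2$ in the smooth threefold $X^{\text{e}}$, and $\mathbb{V}$ is constant on each $E_x$ since $\mathbb{P}^{1}$ is simply connected.

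The map $j$ is the pullback $f^{\ast}$ on compactly supported cohomology, which exists because $f$ is proper. To see it is an isomorphism I would compute the higher direct images: since $f$ is an isomorphism off $E$ and has fibre $\mathbb{P}^{1}$ over each node, one finds $R^{0}f_{\ast}W = \mathbb{V}$, $R^{1}f_{\ast}W = 0$, $R^{2}f_{\ast}W = \bigoplus_{x \in \Sigma} i_{x,\ast}\mathbb{V}_x(-1)$ and $R^{q}f_{\ast}W = 0$ for $q \ge 3$. Plugging this into the Leray spectral sequence $E_2^{p,q} = H^p_c(\apsp, R^{q}f_{\ast}W) \Rightarrow H^{p+q}_c(X^{\text{e}}, W)$ and using that $H^p_c$ of the finite set $\Sigma$ vanishes for $p \ne 0$, the only nonzero contribution to $H^4_c(X^{\text{e}}, W)$ is $E_2^{4,0} = H^4_c(\apsp, \mathbb{V})$ and no differential touches it, so the edge map, which is exactly $j$, is an isomorphism.

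For the dotted arrow, on one side $R\Phi\mathbb{V} \cong \bigoplus_{x \in \Sigma} i_{x,\ast}\mathbb{V}_x(-2)[-3]$ (as recorded in the proof of Lemma~\ref{weightmonodromy}), so $H^3(\apsp, R\Phi\mathbb{V}) = \bigoplus_{x \in \Sigma}\mathbb{V}_x(-2)$; on the other side, cohomological purity for the smooth pair $(X^{\text{e}}, E)$ of codimension $2$ gives a Gysin isomorphism $H^4_E(X^{\text{e}}, W) = \bigoplus_x H^4_{E_x}(X^{\text{e}}, W) \cong \bigoplus_x H^0(E_x, W|_{E_x})(-2) = \bigoplus_{x}\mathbb{V}_x(-2)$. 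It remains to produce the dotted arrow as a normalised combination of these two identifications so that the square commutes, i.e.\ so that $j \circ q = p \circ (\text{dotted})$, where $q\colon H^3(\apsp, R\Phi\mathbb{V}) \to H^4_c(\apsp, \mathbb{V})$ is the connecting map of \eqref{double-exact} and $p\colon H^4_E(X^{\text{e}}, W) \to H^4_c(X^{\text{e}}, W)$ is the forget-supports map (defined since $E$ is proper). Since every object and every arrow decomposes as a direct sum over $x \in \Sigma$ and is supported near $\Sigma$, respectively $E$, this commutativity may be checked locally: one replaces $\apsp$ by the completion at a node $x$, isomorphic to $\spec\overline{\mathbb{F}}_p\llbracket X,Y,Z,W\rrbracket/(XY-ZW)$, and $X^{\text{e}}$ by the corresponding small resolution with exceptional curve $E_x \cong \mathbb{P}^{1}$. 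There the assertion becomes that, under $f^{\ast}$, the vanishing-cycle class of the split node $XY=ZW$ equals a unit multiple of the Gysin class of the exceptional curve pushed into $H^4_c$ --- which is the content of the Picard--Lefschetz formula of \cite{SGA7} Expos\'e~XV for this singularity; defining the dotted arrow by the purity isomorphism rescaled by the resulting units then makes the square commute and keeps the dotted arrow an isomorphism.

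\textbf{Main obstacle.} The isomorphy of $j$ and the identification of the source and target of the dotted arrow are formal, following from properness, purity, and the structure of $f$. The real work is the commutativity in the last step: matching the connecting homomorphism of the vanishing-cycle triangle of $A_K$ with the cycle-class map of the exceptional divisor of the small resolution $X^{\text{e}}$. This requires a careful local analysis of the ordinary double point $XY=ZW$ and its small resolution, keeping track of the Picard--Lefschetz sign conventions, the orientation of the vanishing cycle, and the Tate twists, so that the normalising units in the dotted arrow are genuinely units.
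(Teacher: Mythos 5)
Your argument that $j$ is an isomorphism is essentially the paper's: you compute $R^q f_\ast W$ and run a Leray spectral sequence, while the paper packages the same information into the triangle $\mathbb{V} \to Ra_{1\ast}a_1^{\ast}\mathbb{V} \to Q$ with $Q \cong \bigoplus_{x\in\Sigma} i_{x,\ast}\mathbb{V}_x(-1)[-2]$ and runs the resulting long exact sequences; the two are interchangeable.

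Where your proposal has a genuine gap is in the construction of the dotted arrow and the verification of commutativity. You identify both $H^3(\apsp, R\Phi\mathbb{V})$ and $H^4_E(X^{\text{e}},\mathbb{V})$ with $\bigoplus_{x\in\Sigma}\mathbb{V}_x(-2)$ by explicit formulas (the SGA7 description of $R\Phi$ on one side, cohomological purity for the smooth pair $(X^{\text{e}},E)$ on the other), and then want to declare the dotted arrow to be ``a unit multiple'' of the identity, appealing to a local Picard--Lefschetz calculation at each node to match the two. You flag this as the ``main obstacle'' and do not carry it out; but this matching of the vanishing-cycle connecting map with the Gysin class of the exceptional $\mathbb{P}^1$ of the small resolution, including signs and orientations, is precisely the content of the lemma, so leaving it as an assertion leaves the proof incomplete.

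The paper avoids this bookkeeping entirely by interposing $\Sigma$-supported cohomology. Applying $R\Gamma_\Sigma$ to the triangle $\mathbb{V}\to R\Psi\mathbb{V}\to R\Phi\mathbb{V}$, the connecting map $g:H^3_\Sigma(\apsp, R\Phi\mathbb{V}) \to H^4_\Sigma(\apsp,\mathbb{V})$ is shown to be an isomorphism using two inputs from SGA7 XV (that $H^3_\Sigma(\apsp,\mathbb{V})\to H^3_\Sigma(\apsp, R\Psi\mathbb{V})$ is an isomorphism, so $h=0$, and that $H^4_\Sigma(\apsp, R\Psi\mathbb{V})=0$); the identification $H^3(\apsp, R\Phi\mathbb{V})\cong H^3_\Sigma(\apsp, R\Phi\mathbb{V})$ is automatic since $R\Phi\mathbb{V}$ is supported on $\Sigma$; and $H^4_\Sigma(\apsp,\mathbb{V})\to H^4_E(X^{\text{e}},\mathbb{V})$ is the pullback $a_1^{\ast}$ (which is defined because $a_1^{-1}(\Sigma)=E$), shown to be an isomorphism by the same $Q$-triangle. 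Because every map in this chain is a natural connecting homomorphism or a pullback, the square with $j$ commutes for purely functorial reasons --- compatibility of the forget-supports transformation $R\Gamma_\Sigma\to R\Gamma_c$ with the $\Psi$--$\Phi$ triangle, and compatibility of $a_1^{\ast}$ with forgetting supports --- and no normalising units ever have to be computed. Replacing your explicit-comparison step with this factorisation through $H^4_\Sigma(\apsp,\mathbb{V})$ would close the gap.
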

Next we relate this to the cycle class map for the supersingular locus $Z$ of $\ahsp$. Because $a_1^{-1}(Z)=E \cup F$ (where $a_1:X^{\text{e}} \to \ahsp$ is the map induced by $a$) there is no natural map $H^4_Z(\ahsp, \mathbb{V}) \to H^4_E(X^{\text{e}}, \mathbb{V})$, however we can produce one as follows:
\begin{Lem} \label{technicallemma}
The natural map $H^4_{Z}(\ahsp, \mathbb{V}) \to H^4_{E \cup F}(X^{\text{e}}, \mathbb{V})$ factors through $H^4_{E}(X^{\text{e}}, \mathbb{V})$ via an isomorphism. To be precise, there is an isomorphism making the following diagram commute:
\begin{equation}
    \begin{tikzcd}
        H^4_{Z}(\ahsp, \mathbb{V}) \arrow{r} \arrow[dr, densely dotted, "\sim"] & H^4_{E \cup F}(X^{\text{e}}, \mathbb{V})  \\
        & H^4_{E}(X^{\text{e}}, \mathbb{V}). \arrow{u}
    \end{tikzcd}
\end{equation}
Here $Z$ is the supersingular locus of $\ahsp$ and $E \cup F$ is the supersingular locus of $\aksp$.
\end{Lem}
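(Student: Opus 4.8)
The plan is to use the proper morphism $a_1 \colon X^{\text{e}} \to \ahsp$ together with the explicit description of its fibres over the supersingular locus $Z$ obtained in Section~\ref{supersingularloci} and Lemmas~\ref{fibersofalemma}, \ref{fibersblemma}. Write $Z = \bigcup_i C_i$ with each $C_i \cong \ps^1$, and let $P \subset Z$ be the finite set of superspecial points, at which the $C_i$ meet pairwise transversally. Then $a_1$ is proper with $a_1^{-1}(Z) = E \cup F$, the restriction $a_1|_E$ realises $E$ as the normalisation of $Z$ (it carries each component $E_i \cong \ps^1$ of $E$ isomorphically onto some $C_i$, since over the dense open $C_i^{\circ} := C_i \setminus P$ the reduced fibre of $a_1$ is a single point), while $a_1$ contracts each component $F_q \cong \ps^1$ of $F$ to the superspecial point $q$ and $a_1^{-1}(q) = F_q$. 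Moreover $\ahsp$ and $X^{\text{e}}$ are smooth threefolds and $E$, $F$ are smooth, so $E \cup F$ is a nodal curve with branches $E_i$, $F_q$ and finitely many singular points, namely the points where some $E_i$ meets some $F_q$; and since $\ps^1_{\overline{\mathbb{F}}_p}$ is simply connected, $\mathbb{V}$ restricts to a constant sheaf on each of these $\ps^1$'s.

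Next I would compute all the relevant local cohomology groups using absolute purity. The $C_i$ are smooth of codimension $2$ in the smooth threefold $\ahsp$ and their pairwise intersections have codimension $3$, so the Mayer--Vietoris sequences for supports split in degree $4$ and yield $H^4_Z(\ahsp, \mathbb{V}) = \bigoplus_i H^0(C_i, \mathbb{V}(-2))$; similarly
\[
H^4_{E \cup F}(X^{\text{e}}, \mathbb{V}) = H^4_E(X^{\text{e}}, \mathbb{V}) \oplus H^4_F(X^{\text{e}}, \mathbb{V}), \qquad H^4_E(X^{\text{e}}, \mathbb{V}) = \bigoplus_i H^0(E_i, \mathbb{V}(-2)),
\]
$H^4_F(X^{\text{e}}, \mathbb{V}) = \bigoplus_q H^0(F_q, \mathbb{V}(-2))$, the natural map $H^4_E(X^{\text{e}}, \mathbb{V}) \to H^4_{E \cup F}(X^{\text{e}}, \mathbb{V})$ being the inclusion of the first summand. (The nodes of $E \cup F$ contribute extra classes, but only in degree $5$, so they are irrelevant here.) Since $\mathbb{V}$ is constant on each $\ps^1$, restriction to any dense open subset of a component induces an isomorphism on $H^0$.

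I would then analyse $a_1^{\ast}$ componentwise. For fixed $C_i$ one has $a_1^{-1}(C_i) = E_i \cup \bigcup_{q \in C_i \cap P} F_q$, so $a_1^{\ast}$ factors through $H^4_{a_1^{-1}(C_i)}(X^{\text{e}}, \mathbb{V}) = H^0(E_i, \mathbb{V}(-2)) \oplus \bigoplus_{q \in C_i \cap P} H^0(F_q, \mathbb{V}(-2))$; in particular it has zero component along $H^0(E_j, \mathbb{V}(-2))$ for $j \neq i$. Restricting further to the open $X^{\text{e}} \setminus \bigcup_{q} F_q$, whose intersection with $a_1^{-1}(C_i)$ is a dense open of $E_i$ mapping isomorphically to $C_i^{\circ}$, and using that $\mathbb{V}$ is constant on $\ps^1$, one finds that the component $H^0(C_i, \mathbb{V}(-2)) \to H^0(E_i, \mathbb{V}(-2))$ of $a_1^{\ast}$ is an isomorphism. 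Consequently the composite of $a_1^{\ast}$ with the projection onto $H^4_E(X^{\text{e}}, \mathbb{V})$ is an isomorphism $H^4_Z(\ahsp, \mathbb{V}) \xrightarrow{\sim} H^4_E(X^{\text{e}}, \mathbb{V})$, and it only remains to show that $a_1^{\ast}$ has no component along $H^4_F(X^{\text{e}}, \mathbb{V})$, equivalently that for each $i$ the maps $H^4_{C_i}(\ahsp, \mathbb{V}) \to H^0(F_q, \mathbb{V}(-2))$ vanish for all $q \in C_i \cap P$.

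This vanishing is the main obstacle. Conceptually it holds because $a_1$ collapses each $F_q$ to a point while $Z$ is one-dimensional, so any contribution of the preimage class along $F_q$ is ``excess'' of the wrong dimension. I would make this rigorous by Poincar\'e--Lefschetz duality: for a proper closed curve $W$ in a smooth threefold $X$, absolute purity identifies $H^4_W(X, \mathbb{V})^{\vee}$ with $H^2(W, \mathbb{V}^{\vee})(3)$, under which $a_1^{\ast}$ becomes the proper pushforward $H^2(E \cup F, \mathbb{V}^{\vee}) \to H^2(Z, \mathbb{V}^{\vee})$ along $a_1|_{E \cup F}$; this pushforward kills the summand $H^2(F_q, \mathbb{V}^{\vee})$ because $a_1|_{F_q}$ factors through the point $q$, whose $H^2$ vanishes. (Alternatively one can argue by a local computation around each $q \in P$, where the explicit fibres of $a$ from Lemma~\ref{fibersofalemma} identify $a_1$ near $F_q$ with a standard contraction of a $\ps^1$, so that the Gysin pullback of $[C_i]$ picks up no component on $F_q$.) The delicate part in either route is matching the ``natural'' map on local cohomology with the duality/Gysin description and keeping track of all the purity identifications in the presence of the nodes of $E \cup F$ and of the non-flatness of $a_1$ along $P$.
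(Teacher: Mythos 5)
Your overall strategy is close to the paper's, but you take a more explicit route at the crucial step and end up proving a bit more. Both arguments rest on the same pillars: absolute purity to compute the groups with support, the fact that $E \cap F$ (resp.\ $Z^{\text{sing}}$) has codimension $3$ so it contributes nothing in degree $4$, and the identification of $E \setminus F$ with $Z^{\text{sm}}$ via $a_1$. The paper, however, never decomposes $H^4_{E \cup F}$: it passes to the opens $U = \ahsp \setminus Z^{\text{sing}}$ and $X^{\text{e}} \setminus F$, observes that the restriction maps $\phi$, $\psi$ to $H^4_{Z^{\text{sm}}}(U,\mathbb{V})$ and $H^4_{E\setminus F}(X^{\text{e}}\setminus F,\mathbb{V})$ are isomorphisms, and identifies those two groups through $\tau$. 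In particular it never mentions $H^4_F(X^{\text{e}},\mathbb{V})$. You instead split $H^4_{E\cup F} = H^4_E \oplus H^4_F$ by Mayer--Vietoris, verify that the $H^4_E$-component of $a_1^{\ast}$ is the purity isomorphism (using exactly the restriction-to-opens trick the paper uses), and then prove separately that the $H^4_F$-component vanishes. That last step is the genuinely new ingredient: it is what literally establishes the factorisation through $H^4_E$ claimed in the lemma, rather than merely the fact that $a_1^{\ast}$ and the inclusion of $H^4_E$ agree after composing with the restriction to $H^4_{E\setminus F}(X^{\text{e}}\setminus F,\mathbb{V})$ (whose kernel is precisely $H^4_F$). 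So your version is more work but also more explicit; if anything it makes the commutativity of the bottom square in the proof of Proposition~\ref{componentgroupiseisenstein} and in diagram~\eqref{sidebyside} completely transparent.

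A comment on the delicate vanishing: the duality route you sketch is correct, but the point that deserves a sentence is why the direct-summand inclusion $H^2(F,\mathbb{V}^{\vee}) \hookrightarrow H^2(E\cup F,\mathbb{V}^{\vee})$ is compatible with proper pushforward. The clean way is to identify that inclusion with the extension-by-zero map $H^2_c(F\setminus E,\mathbb{V}^{\vee}) \to H^2_c(E\cup F,\mathbb{V}^{\vee})$ for the open immersion $F \setminus E \hookrightarrow E \cup F$ (here $H^2_c(F\setminus E) \cong H^2(F)$ because $F\cap E$ is $0$-dimensional); functoriality of $!$-pushforward then gives $(a_1|_{E\cup F})_{!}\circ j_{!} = (a_1|_{F\setminus E})_{!}$, which factors through $H^2_c(P,\mathbb{V}^{\vee}) = 0$ since $a_1(F) \subset P$ and $P$ is finite. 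Spelled out this way, the ``delicate part'' you flag is genuinely fine and not merely plausible.

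One further caveat worth recording: your first paragraph asserts each $C_i$ is isomorphic to $\ps^1$, but Katsura--Oort is stated for the coarse space, and $a_1|_E$ is in general only the \emph{normalisation} of $Z$; the components $C_i$ may \emph{a priori} have singularities at superspecial points where several branches collide. The paper's statement and proof are agnostic about this: it only uses that $Z^{\text{sing}}$ is $0$-dimensional, that $Z^{\text{sm}}$ has $\ps^1$-normalisation (so $\mathbb{V}$ is constant on each component), and that $E\setminus F \to Z^{\text{sm}}$ is a component-bijection. Your argument survives the same relaxation, since you only ever restrict $a_1$ over $C_i^{\circ} = C_i \setminus P$, but you should not lean on $C_i \cong \ps^1$ when you invoke purity for $H^4_Z(\ahsp, \mathbb{V})$ --- use instead that $H^4_{Z^{\text{sing}}}(\ahsp,\mathbb{V}) = H^5_{Z^{\text{sing}}}(\ahsp,\mathbb{V}) = 0$ (codimension $3$), which is the paper's $\phi$-isomorphism.
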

\begin{proof}[Proof of Proposition \ref{componentgroupiseisenstein}]
Lemma \ref{GeometricVanishingCycles} and Lemma \ref{technicallemma} give us a commutative diagram
\begin{equation}
    \begin{tikzcd}
         H^3(\apsp, R \Phi \mathbb{V}) \arrow{r} \arrow[d, "\sim"] & H^4_c(\apsp, \mathbb{V}) \arrow[d, "\sim"] \\
        H^4_{E}(X^{\text{e}}, \mathbb{V}) \arrow{r} & H^4_c(X^{\text{e}}, \mathbb{V})  \\
        H^4_{Z}(\ahsp, \mathbb{V}) \arrow[u, "\sim"] \arrow{r} & H^4_c(\ahsp, \mathbb{V}) \arrow[u]
    \end{tikzcd}
\end{equation}
To show that the top map is zero it suffices to show that the bottom map is zero. Now we note that
\begin{align}
    H^4_c(\ahsp, \mathbb{V})=H^4_c(\ahgen, \mathbb{V}),
\end{align}
by smoothness of $A_H$. Moreover by Assumption \ref{assump:one} we can apply Theorem \ref{vanishingA} to deduce that $H^4_c(\ahgen, \mathbb{V})=0$. We have shown that the map $\zeta$ in the following exact sequence is zero
\begin{equation} 
    \begin{tikzcd}
        H^3_c(\apgen, \mathbb{V}) \arrow{r}{\alpha} & H^3(\apsp, R \Phi \mathbb{V}) \arrow{r}{\zeta} & H^4_c(\apsp, \mathbb{V})
     \end{tikzcd}
\end{equation}
which shows that $\alpha$ is surjective and by Poincaré duality we conclude that $\beta$ is injective. This shows that the component group is zero, because the local monodromy operators $N_x$ in \eqref{componentgroupdef} are isomorphisms.
\end{proof}   
\begin{Cor} \label{freeness}
If Assumption \ref{assump:one} holds then the map
\begin{align}
    \beta \otimes \mathbb{F}: H^3_{\Sigma}(\apsp, \mathbb{V}  )  \otimes_{\mathcal{O}} \mathbb{F} \to H^3_!(\apgen, \mathbb{V}  ) \otimes_{\mathcal{O}} \mathbb{F}
\end{align}
is injective.
\end{Cor}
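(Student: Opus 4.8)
The plan is to upgrade the injectivity of $\beta$ obtained in the proof of Proposition \ref{componentgroupiseisenstein} to the statement that $\coker \beta$ is torsion-free as an $\mathcal{O}$-module. Granting this, Corollary \ref{freeness} is immediate: for a short exact sequence $0 \to M' \to M \to M'' \to 0$ of finitely generated $\mathcal{O}$-modules with $M''$ torsion-free one has $\operatorname{Tor}_1^{\mathcal{O}}(M'', \mathbb{F}) = M''[\varpi] = 0$, so $M' \otimes_{\mathcal{O}} \mathbb{F} \to M \otimes_{\mathcal{O}} \mathbb{F}$ remains injective; we apply this to $0 \to H^3_{\Sigma}(\apsp, \mathbb{V}) \xrightarrow{\beta} H^3_!(\apgen, \mathbb{V}) \to \coker \beta \to 0$, which is exact by Proposition \ref{componentgroupiseisenstein}.

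First I would recall that Proposition \ref{componentgroupiseisenstein} was proved by showing that the map $\alpha \colon H^3_c(\apgen, \mathbb{V}) \to H^3_c(\apsp, R \Phi \mathbb{V})$ is surjective. Since $R \Phi \mathbb{V} = \bigoplus_{x \in \Sigma} i_{x, \ast} \mathbb{V}_x(-2)[-3]$ with each stalk $\mathbb{V}_x$ a free $\mathcal{O}$-module, the target $H^3_c(\apsp, R \Phi \mathbb{V}) = \bigoplus_{x \in \Sigma} \mathbb{V}_x(-2)$ is free over $\mathcal{O}$. Hence the top row of \eqref{double-exact}, which now takes the form
\begin{align}
    0 \to H^3_c(\apsp, \mathbb{V}) \to H^3_c(\apgen, \mathbb{V}) \xrightarrow{\alpha} H^3_c(\apsp, R \Phi \mathbb{V}) \to 0,
\end{align}
is a \emph{split} exact sequence of $\mathcal{O}$-modules.

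Next I would invoke the fact, recalled in Section \ref{nearbycycles}, that $\beta$ is the Poincaré dual of $\alpha$ up to a Tate twist (using self-duality of $\mathbb{V}$ up to a twist). Applying $R\Hom_{\mathcal{O}}(-, \mathcal{O})$ to the split sequence above, and using the identification $H^3_{\Sigma}(\apsp, \mathbb{V}) \cong \bigoplus_{x \in \Sigma} H^3_x(\apsp, R \Psi \mathbb{V})$, realises $\beta$ as a split injection $H^3_{\Sigma}(\apsp, \mathbb{V}) \hookrightarrow H^3(\apgen, \mathbb{V})$ onto an $\mathcal{O}$-module direct summand; in particular $H^3(\apgen, \mathbb{V})/\im \beta$ is free. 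Since the image of $\beta$ is contained in the submodule $H^3_!(\apgen, \mathbb{V}) \subseteq H^3(\apgen, \mathbb{V})$ (as recorded by the last diagram of Section \ref{nearbycycles}), the exact sequence
\begin{align}
    0 \to H^3_!(\apgen, \mathbb{V})/\im \beta \to H^3(\apgen, \mathbb{V})/\im \beta \to H^3(\apgen, \mathbb{V})/H^3_!(\apgen, \mathbb{V}) \to 0
\end{align}
exhibits $\coker \beta = H^3_!(\apgen, \mathbb{V})/\im \beta$ as a submodule of a free $\mathcal{O}$-module, hence torsion-free, which completes the plan.

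The step I expect to be the main obstacle is the claim that $\beta$ is the \emph{integral} Poincaré dual of $\alpha$, i.e.\ that the underlying cup-product pairing is perfect over $\mathcal{O}$ rather than merely after inverting $\ell$, so that dualising a split exact sequence of $\mathcal{O}$-modules again produces a split exact sequence. This is precisely the point where Assumption \ref{assump:one} is needed: the freeness and surjectivity statements of Theorem \ref{vanishingA} for the cohomology of $\apgen$ control the torsion in the relevant cohomology groups, which makes the $\operatorname{Ext}^1$-terms in the universal-coefficient formulation of Poincaré duality vanish. Everything else is formal; alternatively one could avoid duality altogether and chase \eqref{double-exact} directly, but that route appears to need its own torsion-vanishing input.
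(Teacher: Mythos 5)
Your strategy is genuinely different from the paper's: you aim for the stronger statement that $\coker\beta$ is $\mathcal{O}$-torsion-free (equivalent to the corollary since $H^3_\Sigma(\apsp,\mathbb{V})$ is free), by dualising the split surjection $\alpha$ integrally. The paper instead reduces modulo $\varpi$ first and never touches integral duality: the surjectivity of $H^3_!(\apgen,\mathbb{V})\otimes_{\mathcal{O}}\mathbb{F} \to H^3_!(\apgen,\mathbb{V}\otimes_{\mathcal{O}}\mathbb{F})$ from Theorem \ref{vanishingA}, together with the isomorphism $H^3(\apsp,R\Phi\mathbb{V})\otimes\mathbb{F}\cong H^3(\apsp,R\Phi(\mathbb{V}\otimes\mathbb{F}))$, transfers the surjectivity of $\alpha\otimes\mathbb{F}$ to the surjectivity of the map $\overline{\alpha}$ for the $\mathbb{F}$-linear local system; Poincar\'e duality \emph{over the field} $\mathbb{F}$ (always perfect) then gives injectivity of $\overline{\beta}$, and a diagram chase transports this back to $\beta\otimes\mathbb{F}$.

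The gap you flag is real and it is not filled by the hypotheses the paper actually records. Identifying $\beta$ with the $\mathcal{O}$-linear dual $\Hom_{\mathcal{O}}(\alpha,\mathcal{O})$, and hence obtaining that $H^3(\apgen,\mathbb{V})/\im\beta$ is torsion-free as the dual of $\ker\alpha$, requires the cup-product pairing $H^3_c(\apgen,\mathbb{V})\times H^3(\apgen,\mathbb{V}^\vee(3))\to\mathcal{O}$ to be perfect over $\mathcal{O}$, which in turn needs the groups $H^3$ and $H^3_c$ of $\apgen$ to be torsion-free. Theorem \ref{vanishingA} gives freeness of $H^3_!$ and the vanishings $H^i_c=0$ for $i>3$, $H^i=0$ for $i<3$, but neither of these directly controls the torsion in $H^3$ or $H^3_c$ themselves (freeness of $H^3(\mathcal{O})$ would require knowing $H^2(\mathbb{V}\otimes\mathbb{F})=0$, and freeness of $H^3_c(\mathcal{O})$ would require $H^2_c(\mathbb{V}\otimes\mathbb{F})=0$, neither of which is stated). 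One could try to extract such mod-$\varpi$ vanishing from the torsion-coefficient version of Lan--Suh, but at that point you are doing essentially the same mod-$\varpi$ work the paper does, less cleanly. You should either supply the precise torsion-vanishing needed to make the integral duality perfect, or adopt the paper's route of reducing mod $\varpi$ before invoking duality.
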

\begin{proof}
Theorem \ref{vanishingA} says that
\begin{align}
    H^3_{!}(\apgen, \mathbb{V}) \otimes_{\mathcal{O}} \mathbb{F} \to H^3_{!}(\apgen, \mathbb{V} \otimes_{\mathcal{O}} \mathbb{F})
\end{align}
is surjective. Moreover since $R \Phi \mathbb{V}$ only has cohomology in degree $3$ we find that
\begin{align}
    H^3(\apsp, R \Phi \mathbb{V}) \otimes_{\mathcal{O}} \mathbb{F} \to H^3(\apsp, R \Phi \mathbb{V} \otimes_{\mathcal{O}} \mathbb{F} )
\end{align}
is an isomorphism. Now these natural maps fit into a commutative cube (which we won't draw) with the maps
\begin{align}
\overline{\alpha}:H^3_!(\apgen, \mathbb{V} \otimes_{\mathcal{O}} \mathbb{F}) &\to H^3(\apsp, R \Phi \mathbb{V} \otimes_{\mathcal{O}} \mathbb{F}) \\
\overline{\beta}:H^3_{\Sigma}(\apsp, \mathbb{V} \otimes_{\mathcal{O}} \mathbb{F}) &\to H^3_!(\apgen, \mathbb{V} \otimes_{\mathcal{O}} \mathbb{F}).
\end{align}
The fact that $\alpha \otimes \mathbb{F}$ is surjective implies that $\overline{\alpha}$ is surjective. By duality we find that $\overline{\beta}$ is injective which shows that $\beta \otimes \mathbb{F}$ is injective by chasing the cube.
\end{proof}
\begin{Cor} \label{rationalprop}
Let $\mathbb{V}=\mathbb{V}_{a,b}$ be a local system of $L$-vector spaces and suppose that the weight is sufficiently regular, i.e., that $a>b>0$. Then the weight-monodromy conjecture holds for 
\begin{align}
\alpha:H^3_{!}(\apgen, \mathbb{V}), 
\end{align}
in other words Theorem \ref{WMtheorem} holds. 
\end{Cor}
\begin{proof}
The proof is the same as the proof of Proposition \ref{componentgroupiseisenstein}, except that we note that the vanishing of $H^4_c(\ahgen, \mathbb{V})$ holds without restrictions on $\ell$  (this is Corollary \ref{rationalvanishing}).
\end{proof}
\subsection{Proof of Lemma \ref{GeometricVanishingCycles}}
\begin{proof}[Proof of Lemma \ref{GeometricVanishingCycles}]
We apply the functors $R \Gamma_c$ and $R \Gamma_{\Sigma}$ to the triangle $\mathbb{V} \to R \Psi \mathbb{V} \to R \Phi \mathbb{V}$ to get a commutative diagram with exact rows:
\begin{equation}
    \begin{tikzcd}
    H^3_{\Sigma}(\apsp, R \Psi \mathbb{V}) \arrow{d} \arrow{r}{h} &  H^3_{\Sigma}(\apsp, R \Phi \mathbb{V}) \arrow{d}{f} \arrow{r}{g} & H^4_{\Sigma}(\apsp, \mathbb{V})  \arrow{d} \arrow{r} & H^4_{\Sigma}(\apsp, R \Psi \mathbb{V}) \arrow{d} \\
    H^3_c(\apsp, R \Psi \mathbb{V}) \arrow{r} & H^3_c(\apsp, R \Phi \mathbb{V}) \arrow{r} \arrow[d, equal] \ & H^4_c(\apsp, \mathbb{V}) \arrow{r} & H^4_c(\apsp, R \Psi \mathbb{V}) \\
    & H^3(\apsp, R \Phi \mathbb{V}).
    \end{tikzcd}
\end{equation}
The map $h$ is zero by 2.2.5.8 of Expos\'e of \cite{SGA7} showing that $g$ is injective. The group $H^4_{\Sigma}(\apsp, R \psi \mathbb{V})$ vanishes by the discussion preceding op. cit. 2.2.5.1, showing that $g$ is an isomorphism. Since the sheaf $R \Phi \mathbb{V}$ is supported on $\Sigma$ we conclude that $f$ is an isomorphism using the long exact sequence
\begin{align}
    \cdots \to H^2(U, R \Phi \mathbb{V}) \to H^3_{\Sigma}(\apsp, R \Phi \mathbb{V}) \to H^3(\apsp, R \Phi \mathbb{V}) \to H^3(U, R \Phi \mathbb{V}) \to \cdots
\end{align}
Recall that $b_1:X^e \to \apsp$ satisfies $b_1^{-1}(\Sigma)=E$ by Proposition \ref{YuThm}. This means that we get a morphism of cohomology with supports $H^4_{\Sigma}(\apsp, \mathbb{V}) \to H^4_E(X^{\text{e}}, \mathbb{V})$ which fits into the following diagram:
\begin{equation}
    \begin{tikzcd}
    H^3(\apsp, R \Phi \mathbb{V}) \arrow[r, "\cong"] & H^3_{\Sigma}(\apsp, R \Phi \mathbb{V}) \arrow[r, "\cong"] & H^4_{\Sigma}(\apsp, \mathbb{V}) \arrow{r} \arrow{d} & H^4_c(\apsp, \mathbb{V}) \arrow{d}{j} \\
    & & H^4_{E}(X^{\text{e}}, \mathbb{V}) \arrow{r} & H^4_c(X^{\text{e}}, \mathbb{V})
    \end{tikzcd}
\end{equation}
The map $\pi=b_1$ induces a map $\mathbb{V} \to R \pi_{\ast} \pi^{\ast} \mathbb{V}$ which we extend to a distinguished triangle
\begin{align}
    \mathbb{V} \to R \pi_{\ast} \pi^{\ast}\mathbb{V} \to Q. \label{distinguished2}
\end{align}
Recall that $b_1:X^e \to \apsp$ has fibers isomorphic to singletons over all points, except that over the singular points of $\apsp$ the fibers are isomorphic to $\mathbb{P}^1$ (Lemma \ref{fibersblemma}). Using the proper base change theorem and the fact that the pullback of $\mathbb{V}$ to $E_x \cong \mathbb{P}^1$ is constant and isomorphic to $\mathbb{V}_x$ we find that
\begin{align} \label{fibercohomology}
    Q \cong \bigoplus_{x \in \Sigma} i_{x, \ast} \mathbb{V}_x(-1)[-2]
\end{align}
where $i_x: \{x \} \to X$ is the inclusion of the point $x$. This means in particular that $H^{\bullet}_{\Sigma}(\apsp, Q)=H^{\bullet}(\apsp, Q)$ since the restriction of $Q$ to $U$ is zero. If we now apply $R \Gamma_{\Sigma}$ to \eqref{distinguished2} we get a long exact sequence (using the fact that $R \Gamma_{\Sigma} R \pi_{\ast} \cong R \Gamma_{\pi^{-1}(\Sigma)}$)
\begin{align}
    \cdots \to H^i_{\Sigma}(\apsp, \mathbb{V}) \to H^i_{E}(X^e, \mathbb{V}) \to H^i_{\Sigma}(\apsp, Q) \to \cdots.
\end{align}
Because the cohomology of $Q$ is concentrated in degree $2$ we see that the map $H^4_{\Sigma}(\apsp, \mathbb{V}) \to H^4_{E}(X^{\text{e}}, \mathbb{V})$ is an isomorphism. By a similar long exact sequence argument (applying $R \Gamma$ to \eqref{distinguished2}) it follows that $j$ is an isomorphism.
\end{proof}

\subsection{Proof of Lemma \ref{technicallemma}}
The lemma will follow from the fact that $a_1:E \to Z$ induces a bijection on irreducible components and the fact that the sheaf $\mathbb{V}$ is constant on $E$ and $Z$ (because the components are projective lines). However, since there is no natural map $H^4_{Z}(\ahsp, \mathbb{V}) \to H^4_E(X^{\text{e}}, \mathbb{V})$ we have to pass to the smooth locus of $Z$ which makes the proof look more involved than it is.
\begin{proof}[Proof of Lemma \ref{technicallemma}]
Let $V=\ahsp \setminus Z$ be the complement of $Z$, let $U$ be the complement of the singular locus of $Z$ and let $V'=X^{\text{e}} \setminus \left(E \cup F \right)$. Then we have a commutative diagram of maps of pairs (recall that $a(F)$ is precisely the set of singular points of $Z$)
\begin{equation}
    \begin{tikzcd}
    &  (X^{\text{e}}, X^{\text{e}} \setminus E)  \\
    (X^{\text{e}} \setminus F, V') \arrow{rr} \arrow{ur}  \arrow{d} &  &  (X^{\text{e}}, V') \arrow{ul} \arrow{d} 
    \\ (U,V) \arrow{rr} & & (\ahsp,V). 
    \end{tikzcd}
\end{equation}
There are induced (contravariant) maps in cohomology with support
\begin{equation}
    \begin{tikzcd}
    & H^4_{E}(X^{\text{e}}, \mathbb{V}) \arrow{dl} \arrow{dr}{\psi} \\
    H^4_{E \cup F}(X^{\text{e}}, \mathbb{V}) \arrow{rr} &  & H^4_{E \setminus F}(X^{\text{e}} \setminus F, \mathbb{V}) \\
    H^4_{Z}(\ahsp, \mathbb{V}) \arrow{u} \arrow{rr}{\phi} & &  H^4_{Z^{\text{sm}}}(U, \mathbb{V})  \arrow{u}{\tau}
    \end{tikzcd}
\end{equation}
and we are going to show that $\phi, \psi$ and $\tau$ are all isomorphisms, providing the required factorisation. The map $\phi$ fits into a long exact sequence for $\ahsp \supset U \supset V$ (c.f. Chapter 23 of \cite{milneLEC})
\begin{align}
    \cdots \to H^4_{Z^{\text{sing}}}(\ahsp, \mathbb{V}) \to H^4_{Z}(\ahsp, \mathbb{V}) \to H^4_{Z^{\text{sm}}}(U, \mathbb{V}) \to H^5_{Z^{\text{sing}}}(\ahsp, \mathbb{V}) \to \cdots
\end{align}
Since $Z^{\text{sing}}$ is smooth of codimension $3$ in $\ahsp$, cohomological purity (Theorem 16.1 of \cite{milneLEC}) tells us that
\begin{align}
    H^i_{Z^{\text{sing}}}(\ahsp, \mathbb{V})=H^{i-6}(Z^{\text{sing}}, \mathbb{V})(-3)
\end{align}
which is zero for $i=4,5$, so $\phi$ is an isomorphism. Now we note that both $Z^{\text{sm}}$ and $E \setminus F$ are smooth of codimension $2$ in $U$ and $X^{\text{e}} \setminus F$ respectively so we can use cohomological purity twice to produce a commutative diagram
\begin{equation}
    \begin{tikzcd}
        H^4_{E \setminus F}(X^{\text{e}} \setminus F, \mathbb{V}) \arrow[r, "\sim"] & H^0(E \setminus F, \mathbb{V})(-2) \\
        H^4_{Z^{\text{sm}}}(U, \mathbb{V})  \arrow{u}{\tau} \arrow[r, "\sim"]& H^0(Z^{\text{sm}}, \mathbb{V})(-2) \arrow{u}{\sigma}.
    \end{tikzcd}
\end{equation}
The map $\sigma$ is an isomorphism because the map $E \setminus F \to Z^{\text{sm}}$ induces a bijection on irreducible components (Section \ref{supersingularloci}) and the sheaf $\mathbb{V}$ is constant on each of those components (because the components are isomorphic to $\mathbb{P}^1$). Similarly, we get a commutative diagram
\begin{equation}
    \begin{tikzcd}
    H^4_E(X^e, \mathbb{V}) \arrow{d}{\psi} \arrow[r, "\sim"] & H^0(E, \mathbb{V})(-2) \arrow{d}{\xi} \\
    H^4_{E \setminus F}(X^e \setminus F, \mathbb{V}) \arrow[r, "\sim"] & H^0(E \setminus F, \mathbb{V})(-2),
    \end{tikzcd}
\end{equation}
and $\xi$ is an isomorphism because $(E \setminus F) \xhookrightarrow{} E$ induces a bijection on irreducible components and $\mathbb{V}$ is constant on $E$ because $E \simeq \mathbb{P}^1$.
\end{proof}
\section{Proofs of the main results} \label{applications}

\subsection{Mazur's principle}
Let us recall the theorem that we are trying to prove:
\mazursprinciple*
\begin{proof}
Assume that there are no congruences to unramified cuspidal automorphic representations $\pi'$, in other words, that the conclusion of the theorem does not hold. We will prove that $\overline{\rho_{\pi, \ell}}$ has at most three Frobenius eigenvalues, showing that the assumptions of the theorem are not satisfied. Let $\mathbb{T}$ be the Hecke algebra containing the unramified Hecke operators and the Hecke operators at $p$ and let $\mathfrak{m}$ be the maximal ideal of the Hecke algebra corresponding to $\overline{\rho_{\pi, \ell}}$, then we have a decomposition
\begin{align}
    H^3_{!}(\apgen, \mathbb{V})_{\mathfrak{m}} \otimes_{\mathcal{O}} L= \bigoplus_{} \rho_{\pi',\ell} \label{decompositionfive},
\end{align}
where $\rho_{\pi',\ell}$ is congruent modulo $\ell$ to $\rho_{\pi, \ell}$ and might appear multiple times in the direct sum. Note that Yoshida lifts and Saito-Kurokawa lifts don't contribute to the direct sum because the associated Galois representations are reducible and so cannot be congruent to $\rho_{\pi, \ell}$. This means that $\eqref{decompositionfive}$ is really a direct sum of irreducible four-dimensional Galois representations. By assumption all the $\pi'_p$ over which the sum is indexed are ramified at $p$ and so by weight-monodromy (Corollary \ref{rationalprop}) and local-global compatibility we know that $\pi'_p$ is of type IIa. This means that the monodromy operator $N:\rho_{\pi',\ell}(1) \to \rho_{\pi',\ell}$ has a one-dimensional image, which shows that
\begin{align}
    \alpha(\rho_{\pi',\ell})
\end{align}
is a one-dimensional subspace of $H^3(\apsp, R \Phi \mathbb{V})_{\mathfrak{m}} \otimes_{\mathcal{O}} L$. Since $\alpha$ is moreover surjective we find that the dimension of $H^3(\apsp, R \Phi \mathbb{V})_{\mathfrak{m}} \otimes_{\mathcal{O}} L$ is equal to $n$, the number of $\rho_{\pi', \ell}$'s appearing in the direct sum \eqref{decompositionfive}.

By Lemma \ref{eigenvaluelemma} we know that Frobenius acts on $\alpha(\rho_{\pi',\ell})$ by the scalar $p \lambda_{\pi'}$, where $\lambda_{\pi'}$ is the eigenvalue of the $u$-operator on $(\pi'_p)^{K(p)}$, introduced in Section \ref{Atkin-Lehner}. Localising at $\mathfrak{m}$ means fixing a mod $\ell$ eigenvalue of $u$ and so Frobenius acts with a single eigenvalue on
\begin{align}
    H^3(\apsp, R \Phi \mathbb{V})_{\mathfrak{m}} \otimes_{\mathcal{O}} \mathbb{F}.
\end{align}
We also know that $H^3_{!}(\apgen, \mathbb{V})_{\mathfrak{m}} \otimes_{\mathcal{O}} \mathbb{F}$ is isomorphic (up to semi-simplification) to
\begin{align}
    \overline{\rho_{\pi, \ell}}^{\oplus n}
\end{align}
for the same $n$ as before (by vanishing of torsion). Since $\overline{\rho_{\pi, \ell}}$ has four distinct Frobenius eigenvalues, we know that the space $H^3_{!}(\apgen, \mathbb{V})_{\mathfrak{m}} \otimes_{\mathcal{O}} \mathbb{F}$ decomposes into four generalised Frobenius eigenspaces, each of dimension $n$. Now $\alpha \otimes \mathbb{F}$ is surjective because $\alpha$ is surjective hence the kernel of $\alpha \otimes \mathbb{F}$ only contains three Frobenius eigenvalues. Because $\overline{\rho_{\pi, \ell}}$ is irreducible we can find a copy
\begin{align}
    \overline{\rho_{\pi, \ell}} \subset H^3_{!}(\apgen, \mathbb{V})_{\mathfrak{m}} \otimes_{\mathcal{O}} \mathbb{F},
\end{align}
which is contained in the kernel of $N \otimes \mathbb{F}$ since $\overline{\rho_{\pi, \ell}}$ is unramified. As usual we compute the monodromy operator using the following diagram:
\begin{equation}
    \begin{tikzcd}
       H^3_{!}(\apgen, \mathbb{V})(1)_{\mathfrak{m}} \otimes_{\mathcal{O}} \mathbb{F} \arrow[r, "\alpha \otimes \mathbb{F}", twoheadrightarrow] \arrow{d}{N \otimes \mathbb{F}} & H^3(\apsp, R \Phi \mathbb{V})(1)_{\mathfrak{m}} \otimes_{\mathcal{O}} \mathbb{F} \arrow{d} \\        
       H^3_!(\apgen, \mathbb{V})_{\mathfrak{m}} \otimes_{\mathcal{O}} \mathbb{F}  &  H^3_{\Sigma}(\apsp, \mathbb{V})_{\mathfrak{m}} \otimes_{\mathcal{O}} \mathbb{F} \arrow[l, "\beta \otimes \mathbb{F}", hook]
    \end{tikzcd}
\end{equation}
Corollary \ref{freeness} tells us that $\beta \otimes \mathbb{F}$ is injective which means that $\ker (N \otimes \mathbb{F}) = \ker (\alpha \otimes \mathbb{F})$. Therefore our copy of $\overline{\rho_{\pi, \ell}}$ must be contained in the kernel of $\alpha \otimes \mathbb{F}$, but then $\overline{\rho_{\pi, \ell}}$ only has three distinct Frobenius eigenvalues, which gives a contradiction. \end{proof}
\subsection{A geometric Jacquet-Langlands correspondence}
Let us start by stating a precise version of Theorem \ref{JLtheorem}:
\begin{Thm} \label{JLtheorem2}
\begin{enumerate}[label=(\arabic*)]
\item Let $\pi$ be a cohomological cuspidal automorphic representation of $\gsp_4$ that is not an irrelevant Yoshida lift and such that $\pi_p$ is ramified and $K(p)$-spherical. Then there is a cuspidal automorphic representation $\sigma$ of $G$ such that $\pi_v \cong \sigma_v$ for finite places $v \not=p$, such that $\sigma_p$ is $K_2(p)$-spherical and with $\sigma_{\infty}$ determined by $\pi_{\infty}$. Moreover, $\sigma$ occurs with multiplicity one in the cuspidal spectrum of $G$. Conversely, a given cuspidal automorphic representation $\sigma$ of $G$ comes from such a $\pi$ if $\sigma_{\infty}$ has weight $k>0,j>3$.
\item Let $k \ge 0, j \ge 3$ and let $N$ be a squarefree integer such that $p \mid N$, then there is an injective map
\begin{align}
S_{k,j}[\para(N)]^{p-\text{new}} \xhookrightarrow{} \mathcal{A}^G_{k,j}[K_2(N)], \label{eq:beta}
\end{align}
equivariant for the prime-to-$p$ Hecke operators, which proves Conjecture \ref{Conj1}. For $k>0, j>3$ the image consists precisely of algebraic modular forms that are not weakly-endoscopic. 
\item Let $k \ge 0, j \ge 3$ and $N=p$ be prime, then there is an injective lift
\begin{align}
    S_{2j-2+k}[\Gamma_0(1)] \times S_{k+2}[\Gamma_0(p)]^{\text{new}} \to \mathcal{A}^G_{k,j}[K_2(p)]
\end{align}
and for $k=0$ an injective lift
\begin{align}
    S_{2j-2}[\Gamma_0(1)] \to \mathcal{A}^G_{0,j}[K_2(p))].
\end{align}
The image of \eqref{eq:beta} is a complementary subspace to the space generated by the images of these lifts (and the constant algebraic modular forms if $k=0,j=3$). Moreover for $k \ge 0, j \ge 3$ we have an equality
\begin{align}
    \dim \mathcal{A}^G_{k,j}[K_2(p)]&=\dim S_{2j-2+k}[\Gamma_0(1)] \cdot \dim S_{k+2}[\Gamma_0(p)]^{\text{new}} + \dim S_{k,j}[\Gamma_0(p)] +  \\ & - 2 \dim S_{k,j}[\Gamma_0(1)] + \delta_{k,0}\dim S_{2j-2}[\Gamma_0(1)] +\delta_{k,0} \cdot \delta_{j,0}.
\end{align}
\end{enumerate}
\end{Thm}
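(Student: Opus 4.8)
The engine for all three parts is the map $\alpha\colon H^3_!(\apgen,\mathbb{V})\to H^3(\apsp,R\Phi\mathbb{V})$ produced by the Picard--Lefschetz analysis of Section~\ref{nearbycycles}, together with a precise identification of its target. So the first step is to record that identification: combining $R^3\Phi\mathbb{V}=\bigoplus_{x\in\Sigma}i_{x,*}\mathbb{V}_x(-2)$ with the bijection $\Sigma\cong G(\qqq)\setminus G(\mathbb{A}^\infty)/U^pK_2(p)$ of Section~\ref{supersingularloci}, and using that $\mathbb{V}=\mathbb{V}_{a,b}$ is attached to a representation $V$ of $\gsp_4=G_{\mathbb{C}}$, one obtains a Hecke-equivariant isomorphism $H^3(\apsp,R\Phi\mathbb{V})\cong\mathcal{A}^G_{a-b,\,b+3}[U^pK_2(p)](-2)$. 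Thus $\alpha$ is, up to a Tate twist, a Hecke-equivariant map from the inner cohomology of the paramodular threefold to a space of algebraic modular forms for $G$, and the whole proof amounts to controlling its image and kernel using the weight--monodromy theorem.

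For part (1), let $\pi$ be cohomological cuspidal with $\pi_p$ ramified and $K(p)$-spherical and not an irrelevant Yoshida lift. Choosing $U^p$ with $\pi^{U^p}\neq0$, the $\pi$-isotypic part of $H^3_!(\apgen,\mathbb{V})$ is nonzero, and Theorem~\ref{WMtheorem} together with local--global compatibility forces the monodromy operator on the Galois representation of $\pi$ occurring in cohomology to be nonzero (this is exactly where ``not irrelevant'' enters: an irrelevant Yoshida lift has unramified Galois representation, hence trivial monodromy, hence contributes nothing to $\im\alpha$). Hence $\alpha$ does not annihilate the $\pi$-part, yielding a nonzero Hecke eigenclass in $\mathcal{A}^G_{a-b,b+3}[U^pK_2(p)]$; let $\sigma$ be an automorphic representation of $G$ realising it. Since $\alpha$ is equivariant for the prime-to-$p$ Hecke action and $\pi^p_{\mathrm{fin}}$ is irreducible we get $\sigma^p_{\mathrm{fin}}\cong\pi^p_{\mathrm{fin}}$ (using $G(\mathbb{A}^{\infty,p})\cong\gsp_4(\mathbb{A}^{\infty,p})$), while $\sigma_p$ is $K_2(p)$-spherical by construction and $\sigma_\infty\cong V\otimes\mathbb{C}$; multiplicity one follows from the corresponding statement on $\gsp_4$ via $\alpha$ and local newform theory, or from Arthur's classification for the inner form $G$. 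For the converse: when $a>b>0$, Corollary~\ref{rationalprop} gives that $\alpha$ is surjective, so every Hecke eigenclass in $\mathcal{A}^G_{a-b,b+3}[U^pK_2(p)]$ lifts to $H^3_!(\apgen,\mathbb{V})$; since in regular weight Saito--Kurokawa representations do not contribute and since being in $\im\alpha$ forces ramification at $p$ with nonzero monodromy, the lift is of general or relevant-Yoshida type with $\pi_p$ ramified $K(p)$-spherical, i.e.\ is such a $\pi$.

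For part (2), a $p$-new Hecke eigenform $f\in S_{k,j}[\para(N)]$ gives, via Proposition~\ref{SiegeltoAutomorphic}, a cuspidal $\pi$ with $\pi_\infty$ in the holomorphic discrete series and $\pi_p$ ramified, $K(p)$-spherical; by Corollary~\ref{Yoshida} there are no holomorphic Yoshida lifts of paramodular level, so $\pi$ is of general type or (when $k=0$) Saito--Kurokawa type, in either case not an irrelevant Yoshida lift. Part~(1) produces $\sigma$, unramified outside $N$, $K(v)$-spherical at $v\mid N/p$ and $K_2(p)$-spherical at $p$, giving $\varphi(f)\in\mathcal{A}^G_{k,j}[K_2(N)]$; Hecke-equivariance is automatic and injectivity follows since $\varphi$ respects the isotypic decompositions and is nonzero on each one-dimensional newform line. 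For $N=p$ this is Conjecture~\ref{Conj1}. When $k>0,j>3$ a weakly endoscopic $\sigma$ has Satake parameters matching a pair $(\pi_1,\pi_2)$, hence corresponds to a Yoshida-type representation of $\gsp_4$, which by Corollary~\ref{Yoshida} is never the transfer of a holomorphic $p$-new Siegel form; conversely the converse in part~(1) shows a non-weakly-endoscopic $\sigma$ transfers back to a general-type $\pi$ whose holomorphic member lies in $S_{k,j}[\para(N)]^{p\text{-new}}$ and maps to $\sigma$, which identifies the image. For part~(3) with $N=p$ prime, the two explicit lifts are the $L$-packet constructions of Section~\ref{Sec:Yoshida}: a pair $(g,h)$ of weights $2j-2+k=a+b+4$ and $k+2=a-b+2$ gives a Yoshida packet $L(\pi_1,\pi_2)$ with a member $\sigma$ whose archimedean component is the finite-dimensional representation $V$ of $G(\rr)$, whose component at $p$ is the Jacquet--Langlands transfer of the $K(p)$-spherical local member (so $\sigma_p^{K_2(p)}\neq0$), and which is generic elsewhere; one checks $m(\sigma)=1$ from the $G$-analogue of Theorem~\ref{multiplicityformula}, and injectivity from Satake parameters. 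A form in $S_{2j-2}[\Gamma_0(1)]$ ($k=0$) similarly gives a Saito--Kurokawa-type member on $G$, again of multiplicity one and $K_2(p)$-spherical. Decomposing $\mathcal{A}^G_{k,j}[K_2(p)]\otimes\mathbb{C}=\bigoplus_\sigma m(\sigma)\sigma^{K_2(p)}$ by Arthur type, the summands are of general type ($=\im\varphi$ by part~(2)), Yoshida type ($=$ image of the first lift, by the explicit local $L$-packet description at $p$), Saito--Kurokawa type ($=$ image of the second lift, present only for $k=0$), or trivial (present only for $k=0,j=3$); disjointness gives the complementary-subspace statement, and the displayed dimension formula falls out by counting dimensions piece by piece, inserting the known dimension formulas for $S_{k,j}[\Gamma_0(p)]$, $S_{k,j}[\Gamma_0(1)]$ and the relevant spaces of elliptic modular forms, with $\dim S_{k,j}[\Gamma_0(p)]-2\dim S_{k,j}[\Gamma_0(1)]=\dim S_{k,j}[\para(p)]^{\text{new}}$ accounting for $\im\varphi$.

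The hard part is not geometric: the geometry (Picard--Lefschetz plus the weight--monodromy theorem) already manufactures the transfer and its basic properties, but pinning down the image of $\varphi$ and all the multiplicity-one statements rests on Arthur's classification for $\gsp_4$ and for the inner form $G$, on the behaviour of local $L$-packets (and their Jacquet--Langlands transfers) at $p$ and at $\infty$, and on verifying $K_2(p)$-sphericity of the transferred local components; assembling the final dimension identity from the classical dimension formulas is a further, purely bookkeeping, obstacle.
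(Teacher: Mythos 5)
Your treatment of parts (1) and (2) follows the paper's route closely: identify $H^3(\apsp,R\Phi\mathbb{V})$ with $\mathcal{A}^G_{a-b,b+3}[U^pK_2(p)]$ up to a Tate twist, then use the weight–monodromy theorem plus local–global compatibility to see that each ramified $\rho_{\pi,\ell}$ meets the kernel of $\alpha$ in a codimension-one subspace, so that non-irrelevant $\pi$ contribute to $\im\alpha$. Two caveats. First, the multiplicity-one claim in (1) is glossed over: ``from Arthur's classification for the inner form $G$'' is not a result you can cite, and ``via $\alpha$ and local newform theory'' hides the real work. The paper first proves that the full $\sigma$-isotypic piece $P_\sigma=m(\sigma)\sigma_f^{U'}$ lies in $\im\alpha$ (by a cokernel argument using $\coker\alpha\hookrightarrow H^4_c(\ahsp,\mathbb{V})$), and then deduces $m(\sigma)=1$ by showing that all $\pi_i$ mapping to $P_\sigma$ are isomorphic, using that local $L$-packets on $\gsp_4$ have unique generic members (for general/Yoshida type) and a CAP case analysis from Roberts--Schmidt. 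That intermediate claim is the real content and you would need to supply it.

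For part (3) you genuinely diverge from the paper, and the alternative has gaps. You construct the Yoshida and Saito--Kurokawa lifts to $G$ directly via explicit local $L$-packets and a ``$G$-analogue of Theorem~\ref{multiplicityformula},'' and you decompose $\mathcal{A}^G_{k,j}[K_2(p)]\otimes\mathbb{C}$ by Arthur type for $G$. Neither the multiplicity formula nor the Arthur classification for the inner form $G=GU_2(D)$ is established in the paper or its references, and the paper explicitly remarks that the lifts it produces are not known to agree with Ibukiyama's $\theta$-lifts. The paper instead derives part (3) from the geometry: it uses Petersen's computation $H^4_c(A_{H,\overline{\mathbb{Q}}},\mathbb{V}_{a,b})=0$ unless $a=b$ is even (so $\alpha$ is surjective outside that case), proves Claim~\ref{lastclaim} giving $\dim\im\alpha$ by counting $K(p)$-new holomorphic forms and relevant Yoshida lifts, and, in the one case where $\alpha$ is not surjective ($a=b>0$ even), cites Ibukiyama's own dimension formula to pin down $\dim\coker\alpha$ and thereby obtain the surjection $H^4_Z\to H^4_c$ which yields the second lift. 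This circumvents the automorphic machinery on $G$ that your sketch requires. Finally, your closing simplification $\dim S_{k,j}[\Gamma_0(p)]-2\dim S_{k,j}[\Gamma_0(1)]=\dim S_{k,j}[\para(p)]^{\text{new}}$ is false when $k=0$ and $j$ is even: Saito--Kurokawa eigenforms in level one give only one paramodular oldform at level $p$ rather than two, and the paper's formula contains the correction term $\delta_{k,0}\tfrac{1+(-1)^j}{2}\dim S_{2j-2}[\Gamma_0(1)]$ for precisely this reason.
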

\begin{Rem}
The dimension formula in (iii) is proven by Ibukiyama for $j \not=3,4$ in \cite{IbukiyamaConjecture} and for $k=0,j=3$ in \cite{Ibukiyamadimension}.
\end{Rem}
The main idea of the proof is the incarnation of the singular locus $\Sigma$ of $\apsp$ as a Shimura set for $G$. We then get a map $\alpha$ from $H^3_{!}(\apgen, \mathbb{V})$, which we can explicitly describe in terms of automorphic forms and Galois representations, to the space of algebraic modular forms. Theorem \ref{WMtheorem} combined with local-global compatibility allows us to control the image of $\alpha$ in a Galois-theoretic way. For parts (2) and (3) the main ingredients are the multiplicity one result of \cites{Arthur, GeeTaibi} and the main Theorem of \cite{Petersen}, which computes the cohomology of $\ahgen$ if $U^p=\operatorname{GSp}(\hat{\mathbb{Z}}^p)$. We start by relating spaces of algebraic modular forms to cohomology groups on $\apsp$, using our description of the singular locus:
\begin{Lem}
There is an isomorphism
\begin{align}
    H^0(\Sigma, \mathbb{V}_{a,b}) \cong \mathcal{A}^{G}_{a-b,b+3}[U^p K_2(p)],
\end{align}
equivariant for the prime-to-$p$ Hecke operators.
\end{Lem}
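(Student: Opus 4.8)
The plan is to deduce the isomorphism from the Rapoport--Zink uniformisation of the supersingular locus recalled in Section~\ref{supersingularloci}, combined with the definition of the automorphic local system $\mathbb{V}_{a,b}$ from Section~\ref{automorphiclocalsystems}. Since $\Sigma$ is a finite set of closed points of $\apsp$, equipped with its reduced structure, and since $\mathbb{V}_{a,b}$ extends naturally to $\apsp$ (being an $\ell$-adic local system with $\ell\neq p$), one has $H^0(\Sigma, \mathbb{V}_{a,b}) = \bigoplus_{x \in \Sigma} (\mathbb{V}_{a,b})_x$; the task is thus to compute the stalks and to repackage them as functions on a double coset.

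First I would note that the identification $\Sigma \cong N = G(\mathbb{Q}) \setminus G(\mathbb{A}^{\infty})/U^p K_2(p)$ is natural in the prime-to-$p$ level $U^p$: shrinking $U^p$ gives a finite étale cover of $A_K$, hence of $\apsp$ and of $\Sigma$, compatible with the uniformisation morphism, since the constructions of \cite{KudlaRapoport} and \cite{YuSuperSingular} are functorial in $U^p$. Write the auxiliary prime as $\ell$ and decompose $U^p = U^{p,\ell} U_{\ell}$, so that the level away from $\ell$ is $U^{p,\ell} K(p)$. Applying the above to the cofinal family of subgroups $U^{p,\ell} H_{\ell}(m) K(p)$ used to define $\mathbb{V}_m$, the set of singular points in the special fibre of the paramodular Siegel threefold of that level is $G(\mathbb{Q}) \setminus G(\mathbb{A}^{\infty}) / U^{p,\ell} H_{\ell}(m) K_2(p)$, and the forgetful map to $\Sigma$ is precisely the restriction to $\Sigma$ of the Galois cover $Y_{U^{p,\ell} H_{\ell}(m) K(p)} \to Y_U$ used to define $\mathbb{V}_m$; as $U$ is neat, over each point of $\Sigma$ this restriction is a $U_{\ell}/H_{\ell}(m)$-torsor.

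Unwinding the construction of $\mathbb{V}_m$, its stalk at $x = [g] \in \Sigma$ is the module of functions $f$ on that torsor valued in $V_{\mathcal{O}}/\varpi^m$ with $f(g' t) = (g')^{-1} f(t)$ for $g' \in U_{\ell}$. Collecting these stalks over all $x \in \Sigma$ identifies $H^0(\Sigma, \mathbb{V}_m)$ with the space of functions $F \colon G(\mathbb{Q}) \setminus G(\mathbb{A}^{\infty}) / U^{p,\ell} H_{\ell}(m) K_2(p) \to V_{\mathcal{O}}/\varpi^m$ satisfying $F(g u_{\ell}) = u_{\ell}^{-1} F(g)$ for $u_{\ell} \in U_{\ell}$; taking the inverse limit over $m$ and inverting $\ell$ removes the $H_{\ell}(m)$-invariance and yields exactly $\mathcal{A}^{G}_{V_{a,b}}[U^p K_2(p)]$, where $V_{a,b}$ is the irreducible algebraic representation of $G \cong \gsp_4$ of highest weight $(a,b,a+b)$. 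Finally I would check the weights: by the definition in Section~\ref{Sec:AlgModForms}, $\mathcal{A}^{G}_{a-b,b+3}[-] = \mathcal{A}^{G}_{V}[-]$ with $V$ of highest weight $((a-b)+(b+3)-3,\,(b+3)-3,\,(a-b)+2(b+3)-6) = (a,b,a+b)$, so $V = V_{a,b}$ and the isomorphism follows. It is equivariant for the prime-to-$p$ Hecke operators because every identification used is natural in $U^p$.

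The main obstacle will be the bookkeeping in the second and third steps: verifying carefully that the Rapoport--Zink uniformisation is compatible with adding level structure at the auxiliary prime $\ell$, so that the fibre of $Y_{U^{p,\ell} H_{\ell}(m) K(p)} \to Y_U$ over a singular point really is a $U_{\ell}/H_{\ell}(m)$-torsor, and then matching the ``functions on a torsor'' description of the stalks of $\mathbb{V}_m$ with the equivariance condition defining algebraic modular forms, keeping track of the similitude/central normalisation (which is why the third weight is pinned to $a+b$). Once $\Sigma$ has been identified with the double coset and the weights have been compared, the remainder is formal.
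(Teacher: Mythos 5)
Your proof is correct and takes essentially the same approach as the paper's: identify $\Sigma$ with the Shimura set $G(\qqq)\setminus G(\mathbb{A}^\infty)/U^pK_2(p)$ via the Rapoport--Zink uniformisation, unwind the definition of $\mathbb{V}_m$ as a descended local system to identify $H^0(\Sigma,\mathbb{V}_m)$ with $U_\ell$-equivariant $V_{\mathcal{O}}/\varpi^m$-valued functions on the double coset at level $U^{p,\ell}H_\ell(m)K_2(p)$, then pass to the limit and invert $\ell$. You spell out two points the paper leaves implicit (the torsor structure via functoriality of the uniformisation in $U^p$, and the explicit weight dictionary giving $(a,b,a+b)$ from $k=a-b$, $j=b+3$), but the underlying argument is the same.
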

\begin{proof}
This is standard, see the proof of Proposition 6.4 of \cite{Thorne}.
\end{proof}
\begin{proof}[Proof of Theorem \ref{JLtheorem2} (1)]
We can identify the following cohomology groups
\begin{align}
    H^3(\apsp, R \Phi \mathbb{V}) = \bigoplus_{x \in \Sigma} (R^3 \Phi \mathbb{V})_x = \bigoplus_{x \in \Sigma} \mathbb{V}_x(-2)=H^0(\Sigma, \mathbb{V})(-2),
\end{align}
and the key player of the proof will be the Hecke equivariant map
\begin{align}
    \alpha:H^3_{!}(\apgen, \mathbb{V}) \to H^3(\apsp, R \Phi \mathbb{V})=A^{G}_{a-b,b+3}[U^p K_2(p)].
\end{align}
We can write the domain of $\alpha$ in terms of Hecke modules and Galois representations as follows (using the results of Section \ref{Matsushima} and the multiplicity one result of \cites{Arthur,GeeTaibi})
\begin{align}
    H^3_{!}(\apgen, \mathbb{V}) = \bigoplus_{\pi}  \pi_{\text{fin}}^{U} \otimes \rho_{\pi,\ell}.
\end{align}
We are going to hide some things in the notation for bookkeeping purposes: If $\pi$ is a Yoshida lift or CAP then by $ \rho_{\pi,\ell}$ we mean the two-dimensional Galois representation that occurs in the cohomology. If $\pi$ is of general type with $\pi_{\infty}$ holomorphic then $ \rho_{\pi,\ell}$ is the four-dimensional Galois representation associated with $\pi$ and if $\pi_{\infty}$ is generic then $ \rho_{\pi,\ell}=0$. The reason for doing this is that there is only one four-dimensional Galois representation $\rho_{\pi, \ell}$ for the two automorphic representations $\pi_{\text{fin}} \otimes \pi^W$ and $\pi_{\text{fin}} \otimes \pi^H$ as they both contribute a two-dimensional piece to cohomology. 

Because $\alpha$ has an interpretation in terms of the action of inertia, we know that $\rho_{\pi,\ell}/(\ker \alpha \cap \rho_{\pi,\ell})$ is one dimensional for all ramified $ \rho_{\pi,\ell}$ occurring in $H^3_{!}(\apgen, \mathbb{V})$. To be precise this follows from the conjunction of weight-monodromy and local-global compatibility (for weight-monodromy we need to use the results of Arthur to transfer generic cuspidal automorphic representations to $\operatorname{GL}_4$, where weight-monodromy holds by \cites{Caraiani}, c.f. Theorem 2.1.1.(2) of \cites{MR3152941}). Therefore we will write 
\begin{align}
    \im \alpha \cong \bigoplus_{\pi}  \pi_{\text{f}}^{U} \label{eq:decomposition}
\end{align}
where $\pi$ runs over certain cuspidal automorphic representations of $\gsp_4$. Since $\pi_p^{K(p)}$ is one-dimensional we can rewrite this as
\begin{align}
    \im \alpha \cong \bigoplus_{\pi}  (\pi_{\text{f}}^{p})^{U^p}. \label{eq:decomposition2}
\end{align}
Similarly we can write
\begin{align}
    A^{G}_{a-b,b+3}[U^p K_2(p)]=\bigoplus_{\sigma} m(\sigma) \sigma_{\text{f}}^{U'}
\end{align}
where $\sigma$ runs over certain cuspidal automorphic representations of $G$ and $U'= U^p K_2(p)$. Since $\sigma_p^{K_2(p)}$ is one-dimensional we can also write this as
\begin{align}
    A^{G}_{a-b,b+3}[U^p K_2(p)]=\bigoplus_{\sigma} m(\sigma) (\sigma_{\text{f}}^{p})^{U^p}.
\end{align}
Let $\mathbb{T}$ be the Hecke-algebra away from $p$ for both $G$ and $\gsp_4$, which makes sense as soon as we fix an isomorphism $\gsp_4(\mathbb{A}_f^{p}) \cong G(\mathbb{A}_f^{p})$. Then the spaces $(\pi_{\text{f}}^{p})^{U^p}$ and $(\sigma_{f}^{p})^{U^p}$ are simple $\mathbb{T}$-modules and moreover the map $\alpha$ is equivariant for the action of $\mathbb{T}$. 

Now fix a cohomological automorphic representation $\pi$ of $\gsp_4$ that is not an irrelevant Yoshida lift such that $\pi_{\infty}$ is in the discrete series and such that $\pi_p$ is ramified and $K(p)$-spherical. Then we can choose $U^p$ sufficiently small such that $ \rho_{\pi,\ell}$ occurs in $H^3_{!}(\apgen, \mathbb{V}$). Because $\pi$ is not an irrelevant Yoshida lift we know that the summand
\begin{align}
    F_{\pi}:= \pi_{\text{f}}^U \subset \im \alpha
\end{align}
is nonzero (because the Galois representation occurring in cohomology is ramified). There we see that
\begin{align}
    F_{\pi} \subset A^{G}_{a-b,b+3}[U^p K_2(p)]=\bigoplus_{\sigma} m(\sigma) \sigma_{\text{f}}^{U'}
\end{align}
and so there is a cuspidal automorphic representation $\sigma$ of $G$
such that $(\sigma^p)^{U^p} \cong (\pi^p)^{U^p}$ which implies that $\sigma_v \cong \pi_v$ for all finite places $v \not=p$. Moreover $\sigma_{\infty}$ is determined by $\pi_{\infty}$ because it is determined by the weights $a-b,b+3$. When $a>b>0$ or equivalently $k>0,j>3$ then $\alpha$ is surjective by Corollary \ref{rationalprop} so every cuspidal automorphic representation of $G$ arises in this way. To prove multiplicity one for $\sigma$ `in the image' of this transfer we first prove a claim:
\begin{Claim}
The summand $P_{\sigma}:=m(\sigma) \sigma_f^{U'} \subset \mathcal{A}^G_{a-b,b+3}$ is in the image of $\alpha$ (we have only shown so far that $\im \alpha \cap P_{\sigma} \not = \emptyset)$.
\end{Claim}
\begin{proof}[Proof of Claim]
Let $\pi$ be a cuspidal automorphic representation of $\gsp_4$ that `maps to $\sigma$'. If $P_{\sigma}$ maps nontrivially to $\coker \alpha$ then the Hecke eigenvalues of $\sigma$ occur in $H^4_c(\ahgen, \mathbb{V})$ and so the Hecke eigenvalues associated with $\pi$ occur there. Therefore $\pi$ has the same prime-to-$p$ Hecke eigenvalues as an automorphic form $\pi'$ whose Hecke eigenvalues occur in $H^4_c(\ahsp, \mathbb{V})$. By the Chebotarev density theorem this means that the semi-simplification of $\rho_{\pi, \ell}$ is equal to the semi-simplification of $\rho_{\pi', \ell}$. But $\rho_{\pi, \ell}$ is already semi-simple and since $\rho_{\pi', \ell}$ is unramified at $p$ (because $\pi'_{p}$ is unramified), we conclude that $\rho_{\pi, \ell}$ is unramified at $p$, a contradiction.
\end{proof} 
From the claim we get cuspidal automorphic representations $\pi_{1}, \cdots, \pi_{m(\sigma)}$ such that $\alpha(\rho_{\pi_i, \ell})$ maps to the summand $P_{\sigma}$. Therefore we have $\pi_{i,v} \cong \pi_{j,v}$ for all finite places $v \not=p$ and so all the $\pi_i$ are in the same $L$-packet. When the $L$-packet is not CAP, then the $\pi_{i,p}$ are generic (by weight-monodromy) and this means that $\pi_{i,p} \cong \pi_{j,p}$ because local $L$-packets have unique generic constituents. When the $L$-packet is CAP, then the $\pi_i$ are Saito-Kurokawa lifts and the fact that $\pi_{i,p}$ is $K(p)$-spherical also means that $\pi_{i,p} \cong \pi_{j,p}$ (c.f. Table 2 of \cite{SchmidtII}). We conclude that all the $\pi_i$ are isomorphic and therefore by multiplicity one for $\gsp_4$ we deduce that $m(\sigma)=1$.
\end{proof}
\begin{Rem}
In the proof of parts (2),(3) of Theorem \ref{JLtheorem2} we will work with the open compact subgroup $K(N)$, which is not neat. This means that the moduli functors we defined are not representable and so we cannot, strictly speaking, take the \'etale cohomology of the `Shimura variety of level $K(N)$'. To fix this we choose a neat compact open subgroup $U \subset K(N)$ that is normal in $K(N)$ and then define (with similar definitions for compactly supported and inner cohomology)
\begin{align}
    H_{}^{\bullet}(Y_{K(N), \overline{\qqq}}, \mathbb{V}):=H^{\bullet}_{}(Y_{U, \overline{\qqq}}, \mathbb{V})^{H},
\end{align}
where $H$ is the finite group $H=K(N)/U$. Note that because $H$ is finite and $\mathbb{V}$ is a local system of $L$-vector spaces, taking $H$ invariants is exact and so all the exact sequences from the previous sections carry over to this setting. Furthermore, this definition does not depend on the choice of $U$. Moreover the corresponding space of algebraic modular forms $\mathcal{A}^G[U']$ satisfies
\begin{align}
    \mathcal{A}_{k,j}^G[U']^{H}=\mathcal{A}_{k,j}^G[K_2(N)]
\end{align}
for all $k,j$, which means that there is an induced map
\begin{align}
    \alpha: H^{3}_!(\apgen, \mathbb{V}) \to \mathcal{A}_{k,j}^G[K_2(N)].
\end{align}
\end{Rem}
\begin{proof}[Proof of Theorem \ref{JLtheorem2}(2)]
Write
\begin{align}
    H^3_{!}(\akgen, \mathbb{V}):=\bigoplus_{\pi} \pi_{\text{fin}}^U \otimes \rho_{\pi, \ell},
\end{align}
with the notation as in the proof of part (1) of Theorem \ref{JLtheorem2} (so $\rho_{\pi, \ell}$ is either two or four-dimensional). Let $S$ be the subspace of $H^3_{!}(\apgen, \mathbb{V})$ spanned by the summands $\pi_{\text{fin}}^{U} \otimes \rho_{\pi,\ell}$ for $\pi$ such that $\pi_{\infty}$ is holomorphic and such that $\pi_p$ is ramified. Then $\alpha(S)$ has dimension equal to the dimension of $S_{k,j}[\para(N)]^{p-\text{new}}$ which gives us an injective map (after choosing a basis of normalised eigenforms)
\begin{align}
    S_{k,j}[\para(N)]^{p-\text{new}} \to \mathcal{A}^G_{k, j}[K_2(N)] \otimes \mathbb{C}. \label{THEMAP}
\end{align}
Since there are no holomorphic Yoshida lifts, it is clear that the algebraic modular forms in the image are not weakly endoscopic. If $a>b>0$ then $\alpha$ is surjective by Corollary \ref{rationalprop} and the image of \eqref{THEMAP} is complementary to the subspace of weakly endoscopic algebraic modular forms. Indeed, the only cohomological cuspidal automorphic representations of this level that don't come from $S_{k,j}[\para(N)]$ are non-holomorphic Yoshida lifts. \end{proof}
\begin{proof}[Proof of Theorem \ref{JLtheorem2}(3)]
Now suppose that $N=p$, then Theorem 2.1 of \cite{Petersen} tells us that $H^4_c(A_{H, \overline{\qqq}}, \mathbb{V}_{a,b})=0 \label{zero}$ unless $a=b$ is even. This means that $\alpha$ will be surjective unless $a=b$ is even and so we can prove dimension formulas if we understand the dimension of the image of $\alpha$.
\begin{Claim} \label{lastclaim}
The dimension of the image of $\alpha$ is equal to:
\begin{align} \label{formula}
    \dim \im \alpha &= \dim S_{2j-2+k}[\Gamma_0(1)] \times \dim S_{k+2}[\Gamma_0(p)]^{\text{new}} \\ &+ \dim S_{k,j}[K(p)] - 2 \dim S_{k,j}[K(1)] + \delta_{k,0} \tfrac{1+(-1)^j}{2} \dim S_{2j-2}[\Gamma_0(1)].
\end{align}
\end{Claim}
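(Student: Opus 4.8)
The plan is to compute $\dim \im \alpha$ by reading off the rank of the monodromy operator on $H^3_!(\apgen, \mathbb{V})$ from its decomposition into Hecke-isotypic pieces. By Theorem~\ref{WMtheorem} the map $\gamma$ in \eqref{componentgroupdef} is an isomorphism after inverting $\ell$, so that diagram identifies $\ker \alpha$ with $\ker N$, and hence $\dim \im \alpha = \operatorname{rank} N = \sum_{\pi} \dim \pi_{\mathrm f}^{K(p)} \cdot \operatorname{rank}\!\big(N|_{\rho_{\pi,\ell}}\big)$, the sum running over the cuspidal automorphic representations $\pi$ of $\gsp_4$ contributing to middle inner cohomology, with the bookkeeping of the proof of Theorem~\ref{JLtheorem2}(1) (the full four-dimensional $\rho_{\pi,\ell}$ attached once, to the holomorphic member of each general-type packet). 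Since $N = p$ the level away from $p$ is trivial, so $\dim \pi_{\mathrm f}^{K(p)} = \dim \pi_p^{K(p)} = 1$ if $\pi$ is unramified outside $p$ and $0$ otherwise; by local--global compatibility and the list of ramified $K(p)$-spherical representations from Section~\ref{local}, $\operatorname{rank}(N|_{\rho_{\pi,\ell}})$ is then either $0$ (if the relevant Galois piece is unramified at $p$) or exactly $1$ (if $\pi_p$ is of type IIa for general type, or the two-dimensional piece $\rho_{\pi_2,\ell}(-b-1)$, resp.\ $\rho_{\sigma,\ell}$, is ramified). The problem thus becomes a count of the contributing $\pi$ by Arthur type.

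First I would treat the general-type and holomorphic Saito--Kurokawa contributions together. A general-type parameter unramified outside $p$ with $\pi_p$ ramified $K(p)$-spherical --- necessarily of type IIa --- contributes $1$, and these correspond precisely to the holomorphic Siegel eigenforms of weight $(k,j)$ and level $K(p)$ that are $p$-new; by the paramodular newvector theory of \cite{LocalNewforms} (conductor $0$ gives $\dim \pi_p^{K(p)} = 2$, conductor $1$ gives $1$) this $p$-new space has dimension $\dim S_{k,j}[K(p)] - 2 \dim S_{k,j}[K(1)]$. It decomposes as general-type plus Saito--Kurokawa newforms, there being no holomorphic Yoshida lifts at paramodular level by Corollary~\ref{Yoshida}; moreover every paramodular Saito--Kurokawa $p$-newform arises from an elliptic newform that is Steinberg at $p$, so its cohomological piece $\rho_{\sigma,\ell}$ is already ramified and it too contributes $1$. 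Hence the general-type and holomorphic Saito--Kurokawa pieces together contribute $\dim S_{k,j}[K(p)] - 2 \dim S_{k,j}[K(1)]$ to $\dim \im \alpha$.

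Next, the Yoshida contribution. By Corollary~\ref{Yoshida} each Yoshida $L$-packet of paramodular level has a unique, everywhere-generic cuspidal member carrying $\rho_{\pi_2,\ell}(-b-1)$ in cohomology, and this is ramified at $p$ --- so the monodromy rank is $1$ --- exactly when $\pi_2$ is Steinberg at $p$ and $\pi_1$ is unramified there (a lift with $\pi_1$ also Steinberg is not $K(p)$-spherical, and the remaining case is the irrelevant Yoshida lift of Definition~\ref{def:type1}). Matching Harish--Chandra parameters gives $r_1 = a+b+4 = 2j-2+k$ and $r_2 = a-b+2 = k+2$, so this contributes $\dim S_{2j-2+k}[\Gamma_0(1)] \cdot \dim S_{k+2}[\Gamma_0(p)]^{\mathrm{new}}$, the first term of \eqref{formula}. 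The only remaining contribution is from Saito--Kurokawa representations with non-holomorphic archimedean component that are $K(p)$-spherical with ramified $\rho_{\sigma,\ell}$; this occurs only in scalar weight $k = 0$, and a bookkeeping with the paramodular Saito--Kurokawa lift, the local $A$-packets at $p$, and the global multiplicity formula (the number of places with non-generic local component being even) shows it equals $\delta_{k,0}\tfrac{1+(-1)^j}{2}\dim S_{2j-2}[\Gamma_0(1)]$, the parity factor recording which archimedean constituent occurs. Summing the three contributions yields \eqref{formula}.

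The main obstacle is this last Saito--Kurokawa term: deciding which member of the local $A$-packet at $p$ is $K(p)$-spherical with ramified cohomological piece, and which archimedean member pairs with it to a cuspidal representation of multiplicity one, requires the detailed local analysis from Schmidt's tables (Table~2 of \cite{SchmidtII}) and the Arthur multiplicity formula for CAP representations. As a consistency check, $\coker\alpha$ is identified in \eqref{sidebyside} with the image of the cycle-class map into $H^4_c(\ahgen,\mathbb{V})$, which by Theorem~2.1 of \cite{Petersen} vanishes unless $a = b$ is even and otherwise accounts precisely for the compact Saito--Kurokawa lift of \eqref{lifts}; thus $\dim \im \alpha + \dim \coker\alpha$ recovers the dimension formula of Theorem~\ref{JLtheorem2}(3), the parity condition there being the complement of the one appearing above.
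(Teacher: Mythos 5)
Your overall strategy matches the paper's: identify $\dim \im \alpha$ with the rank of monodromy and count contributions by Arthur type, using Corollary~\ref{Yoshida} and local-global compatibility. The Yoshida count and the matching of Harish--Chandra parameters are correct. But the two middle terms are accounted for differently, and your version has a gap.

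The paper does \emph{not} treat $\delta_{k,0}\tfrac{1+(-1)^j}{2}\dim S_{2j-2}[\Gamma_0(1)]$ as a separate contribution from non-holomorphic Saito--Kurokawa representations; rather, it is a correction to the paramodular oldform count. The cite of \cite{ParamodularForms} is to the effect that an eigenform $f\in S_{k,j}[K(1)]$ produces two \emph{distinct} oldforms in $S_{k,j}[K(p)]$ \emph{unless} $k=0$, $j$ is even, and $f$ is a level-one Saito--Kurokawa lift, in which case it produces only one. This is precisely the failure of your parenthetical assertion ``(conductor $0$ gives $\dim \pi_p^{K(p)} = 2$)'': the newvector dimension formula you are quoting is for \emph{generic} representations, and the local component of an unramified Saito--Kurokawa lift is the non-generic type IIb representation, for which the conductor-$0$, level-$K(p)$ paramodular space is one-dimensional. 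As a result your claimed equality ``general-type plus holomorphic SK $p$-new $= \dim S_{k,j}[K(p)] - 2\dim S_{k,j}[K(1)]$'' is off by exactly the correction term, and the full $p$-newform count already produces it.

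Your alternative explanation for the $\delta_{k,0}$ term is also internally inconsistent. You describe it as the contribution of non-holomorphic SK lifts that are ``$K_2(p)$-spherical with ramified $\rho_{\sigma,\ell}$'' but then parametrize these by $\dim S_{2j-2}[\Gamma_0(1)]$. If $\sigma$ has level $\Gamma_0(1)$ then $\rho_{\sigma,\ell}$ is unramified at $p$ and the monodromy operator on the corresponding piece of $H^3_!$ is zero, so such $\pi$ contribute nothing. Ramified $\rho_{\sigma,\ell}$ forces $\sigma\in S_{2j-2}[\Gamma_0(p)]^{\text{new}}$, so the term you want would have to be expressed in terms of $\dim S_{2j-2}[\Gamma_0(p)]^{\text{new}}$, not $\dim S_{2j-2}[\Gamma_0(1)]$; the bookkeeping you gesture at (``the local $A$-packets at $p$ and the global multiplicity formula show it equals \ldots'') would not produce the stated quantity. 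The clean route is the paper's: fold the Saito--Kurokawa exception into the oldform count via \cite{ParamodularForms}, so that
\begin{align}
 \dim S_{k,j}[K(p)]^{\text{new}}= \dim S_{k,j}[K(p)]-2\dim S_{k,j}[K(1)]+\delta_{k,0}\tfrac{1+(-1)^j}{2}\dim S_{2j-2}[\Gamma_0(1)],
\end{align}
and then $\dim\im\alpha$ is just this plus the Yoshida count.
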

\begin{proof}[Proof of Claim \ref{lastclaim}]
The cohomological cuspidal automorphic representations $\pi$ with nonzero invariants under $K(p)$ are either non-holomorphic Yoshida lifts or correspond to holomorphic Siegel cusp forms. We only care about holomorphic Siegel cusp forms that are new at $p$ and similarly about Yoshida lifts with $\pi_p$ ramified that are not irrelevant. Since $\pi^{K(p)}$ is one-dimensional for all these $\pi$ it suffices to simply count them. The results of \cite{ParamodularForms} tell us that every eigenform $f \in S_{k,j}[K(1)]$ produces two oldforms in $S_{k,j}[K(p)]$, which are distinct unless $k=0$, $j$ is even and $f$ is in the image of the (injective) Saito-Kurokawa lift $S_{2j-2}[\Gamma_0(1)] \to S_{0,j}[K(1)]$. This means that
\begin{align}
    \dim S_{k,j}[K(p)]^{\text{new}}&=\dim S_{k,j}[K(p)]-2 \dim S_{k,j}[K(1)]+\delta_{k,0}\tfrac{1+(-1)^j}{2} \dim S_{2j-2}[\Gamma_0(1)].
\end{align}
Section \ref{Yoshida} tells us that the number of relevant Yoshida lifts is equal to $\dim S_{2j-2+k}[\Gamma_0(1)] \times \dim S_{k+2}[\Gamma_0(p)]^{\text{new}}$ and the formula follows.
\end{proof}
If $a>b$ or if $b$ is odd (equivalently $k>0$ or $j \ge 3$ even) then $\alpha$ is surjective which proves that
\begin{align}
    \dim \mathcal{A}^G_{k, j}[K_2(p)]=\dim \im \alpha = (\star),
\end{align}
where $(\star)$ is given by \eqref{formula}.
When $a=b$ is even and $b>0$ (equivalently $k=0$ and $j \ge 3$ odd) then the dimension formula to be proven follows from Theorem 5.2 of \cite{IbukiyamaConjecture}. The case $a=b=0$ (equivalently $k=0, j=3$) is proven by Ibukiyama in \cite{Ibukiyamadimension} and can also be deduced from the fact that $H^4_c(\ahgen, \mathbb{Q}_{\ell}) = \mathbb{Q}_{\ell}(-2)$.

Now let us return to the case that $a=b$ is even. Theorem 2.1 of \cite{Petersen} tells us that
\begin{align}
    H^4_c(A_{H, \overline{\qqq}}, \mathbb{V}_{a,b}) = L(b-2)^{\oplus s_{a+b+4}}
\end{align}
where $s_{a+b+4}=\dim S_{a+b+4}[\Gamma_0(1)]$. Moreover, the Hecke eigenvalues occurring in $H^4_{c}(A_{H, \overline{\qqq}}, \mathbb{V}_{a,b})=H^4_{c}(\ahgen, \mathbb{V}_{a,b})$ are weakly equivalent to those of automorphic representations parabolically induced from the Siegel parabolic (they are conjecturally non-cuspidal). Theorem 5.2 of \cite{IbukiyamaConjecture} combined with Claim \ref{lastclaim} tells us the dimension of the cokernel of $\alpha$ is also equal to $s_{a+b+4}$, and so
\begin{align}
    H^4_{Z}(\ahgen, \mathbb{V}_{a,b}) \to H^4_{c}(\ahgen, \mathbb{V}_{a,b})
\end{align}
is surjective (recall that $\coker \alpha = \im \gamma$). In any case this means that there is an injective lift $S_{a+b+4}[\Gamma_0(1)] \to \mathcal{A}^G_{0,b+3}[K_2(p)]$ as claimed in the theorem. We conclude that the image of
\begin{align}
    S_{k,j}[\para(p)]^{\text{new}} \to \mathcal{A}^G_{k, j}[K_2(p)]
\end{align}
is a complementary subspace to the subspace generated by the lift from
\begin{align}
    S_{2j-2+k}[\Gamma_0(1)] \times S_{k+2}[\Gamma_0(p)]^{\text{new}},
\end{align}
the lift from $S_{2j-2}[\Gamma_0(1)]$ and the constant algebraic modular forms (if $k=0,j=3$).
\end{proof}
\subsection{Acknowledgements}
I am very grateful to James Newton for his guidance and for carefully reading preliminary versions of this work. I would like to thank Ana Caraiani for her encouragement and her helpful suggestions. I would also like to thank Andrew Graham and Dougal Davis for many useful discussions. Furthermore I would like to thank the anonymous referee for their helpful comments and suggestions.
\appendix 
\section{On the weight-monodromy conjecture}
\label{Appendix:A}
In this appendix we prove the following result (which is presumably well known to experts):
\begin{Lemm} \label{Lemm}
Let $F/\qqq_p$ be a finite extension, let $X/F$ be a Shimura variety of Hodge type, let $j:X \xhookrightarrow{} X^{\ast}$ be the inclusion of $X$ into its minimal compactification and let $\mathbb{V}$ be an automorphic local system on $X$. Then the weight-monodromy conjecture for
\begin{align}
    H^{i}(X^{\ast}_{\overline{F}}, j_{! \ast} \mathbb{V}),
\end{align}
where $j_{! \ast}$ denotes the intermediate extension of perverse sheaves (up to shift), follows from the weight-monodromy conjecture for smooth and proper varieties over $F$
\end{Lemm}
\begin{proof}[Proof of Theorem \ref{Lemm}]
Let $\pi:A \to X$ be the universal family of abelian varieties over $X$ (here we use that $X$ is of Hodge type) and let $\pi^n:A^n \to X$ be the $n$-fold self fiber product of $\pi:A \to X$. Then Proposition 3.2 of \cite{LanStroh} tells us that every automorphic local system $\mathbb{V}$ is a direct summand of $R \pi^n_{\ast} \mathbb{Q}_{\ell}$, up to shift and Tate twist. So it suffices to prove weight monodromy for
\begin{align}
    H^{\bullet}(X_{\overline{F}}^{\ast}, j_{! \ast} R \pi^n_{\ast} \mathbb{Q}_{\ell}).
\end{align}
Let $\sigma: A^{\ast} \to X^{\ast}$ be a projective morphism extending $\pi$ (which exists by \cite{Nagata}) and let $\sigma^n$ be its $n$-fold self fiber product, which sits in the following Cartesian diagram over $\overline{F}$.
\begin{equation}
    \begin{tikzcd}
    A^n_{\overline{F}} \arrow{r}{k} \arrow{d}{\pi^n}& A^{\ast, n}_{\overline{F}} \arrow{d}{\sigma^n} \\
    X_{\overline{F}} \arrow{r}{j} & X_{\overline{F}}^{\ast}
    \end{tikzcd}
\end{equation}
Let us note that $k_{! \ast}\mathbb{Q}_{\ell}[d]$, where $d$ is the dimension of $A^n$, is the intersection cohomology complex $\operatorname{IC}_{A^{\star,n}}$ of $A^{\star, n}$. Theorem 1.8 of \cites{MR3477743} (a refinement of the decomposition theorem) tells us that there is a Galois equivariant decomposition
\begin{align}
    R \sigma^n_\ast \operatorname{IC}_{A^{\star,n}} \simeq \bigoplus_i {}^{\mathfrak{p}}\mathcal{H}^i(R \sigma^n_{\ast} \operatorname{IC}_{A^{\star,n}})[-i].
\end{align}
and similarly
\begin{align}
    R \pi^n_{\ast} \mathbb{Q}_{\ell}[d] \simeq \bigoplus_i  R^i \pi^n \mathbb{Q}_{\ell}[d-i].
\end{align}
\begin{Claim}
For each $i$ there is a Galois-equivariant splitting
\begin{align}
    j_{! \ast} R^i \pi^n_{\ast} \mathbb{Q}_{\ell}[d-i] \subset^{\oplus} {}^{\mathfrak{p}}\mathcal{H}^i(R \sigma^n_{\ast} \operatorname{IC}_{A^{\star,n}})[-i].
\end{align}
\end{Claim}
\begin{proof}
The proper base change theorem tells us that the left hand side and the right hand side have the same restriction to $X_{\overline{F}} \subset X^{\ast}_{\overline{F}}$. Then Lemma 2.2.8 of \cites{MR3477743} gives us the result (rather its Galois equivariance). 
\end{proof}
Lemma 1.4 of \cites{TaylorYoshida} tells us that validity of weight-monodromy passes to direct summands. We compute
\begin{align} \label{eq:DirectSummandEq}
    H^{\bullet}(X_{\overline{F}}^{\ast}, j_{! \ast} R \pi^n_{\ast} \mathbb{Q}_{\ell}) &\subset^{\oplus} H^{\bullet}(X_{\overline{F}}^{\ast}, R \sigma^n_{\ast} k_{! \ast} \mathbb{Q}_{\ell}) \\
    &=H^{\bullet}(A^{\ast,n}_{\overline{F}}, k_{! \star} \mathbb{Q}_{\ell})\\
    &=IH^{\bullet}(A^{\ast, n}_{\overline{F}}, \mathbb{Q}_{\ell}),
\end{align}
and deduce that weight monodromy for the cohomology of  $j_{! \ast} \mathbb{V}$ follows from weight-monodromy for the intersection cohomology of $A^{\ast,n}$. Now let $\rho:Y \to A^{\ast, n}$ be a smooth projective morphism with $Y$ smooth projective.
\begin{Lem}[Corollary 1 of \cites{HansenBlog}]
The intersection cohomology complex of $A^{\ast,n}$ is a Galois-equivariant direct summand of
\begin{align}
    R \rho_{\ast} \mathbb{Q}_{\ell}.
\end{align}
\end{Lem}
\begin{proof}
We know that ${}^{\mathfrak{p}} \mathcal{H}^0(R \rho_{\ast} \mathbb{Q}_{\ell})$ is a Galois-equivariant direct summand of $R \rho_{\ast} \mathbb{Q}_{\ell}$ by Theorem 1.8 of \cites{MR3477743}. Let $a:U \to A^{\ast,n}$ be a dense open subset over which $\rho$ is smooth and such that $U$ is contained in $A^n$. Then there is a Galois-equivariant decomposition 
\begin{align}
    a_{! \ast} a^{\ast} {}^{\mathfrak{p}} \mathcal{H}^0(R \rho_{\ast} \mathbb{Q}_{\ell}) \subset^{\oplus}{}^{\mathfrak{p}} \mathcal{H}^0(R \rho_{\ast} \mathbb{Q}_{\ell})
\end{align}
by Lemma 2.2.8 of \cites{MR3477743}. We know that the intersection cohomology complex of $A^{\ast,n}$ is equal to $a_{! \ast} a^{\ast} \mathbb{Q}_{\ell}[d]$ and since intermediate extension is fully faithful it suffices to show that $\mathbb{Q}_{\ell}$ is a Galois-equivariant direct summand of $a^{\ast} {}^{\mathfrak{p}} \mathcal{H}^0(R \rho_{\ast} \mathbb{Q}_{\ell})$. But the latter is equal to
\begin{align}
    R^0 \tau_{\ast} \mathbb{Q}_{\ell},
\end{align}
where $\tau$ is the restriction of $\rho$ to $U$. The natural map
\begin{align}
    \mathbb{Q}_{\ell} \to R^0 \tau_{\ast} \mathbb{Q}_{\ell}
\end{align}
has a section coming from the trace map. 
\end{proof}
This means that weight-monodromy for $Y$ implies weight-monodromy for the intersection cohomology of $A^{\ast,n}$, proving the theorem.
\end{proof}

\DeclareRobustCommand{\VAN}[3]{#3}
\bibliography{references.bib}

\end{document}